\newtheorem{theorem}{Theorem}[section]
\newtheorem{lemma}[theorem]{Lemma}
\newtheorem{proposition}[theorem]{Proposition}
\newtheorem{corollary}[theorem]{Corollary}
\theoremstyle{definition}
\newtheorem*{definition}{Definition}
\newtheorem{remark}[theorem]{Remark}
\newtheorem*{Index Convention}{Index Convention}
\newtheorem*{notation}{Notation}
\newtheorem*{definition0}{Definition of level 0}
\newtheorem*{definition3}{\boldmath Definition of level $t>0$}
\def\keywords#1{\par\medskip
\noindent\textbf{Key words.} #1}
\def\subjclass#1{{\renewcommand{\thefootnote}{}
\footnote{\emph{Mathematics Subject Classification (2010):} #1}}}
\begin{document}
\let\le=\leqslant
\let\ge=\geqslant
\let\leq=\leqslant
\let\geq=\geqslant
\newcommand{\e}{\varepsilon }
\newcommand{\g}{\gamma}
\newcommand{\F}{{\Bbb F}}
\newcommand{\N}{{\Bbb N}}
\newcommand{\Z}{{\Bbb Z}}
\newcommand{\Q}{{\Bbb Q}}
\newcommand{\R}{\Rightarrow }
\newcommand{\W}{\Omega }
\newcommand{\s}{\sigma }
\newcommand{\ka}{\varkappa }
\newcommand{\Lr}{\Leftrightarrow }
\newcommand{\al }{\alpha }
\newcommand{\w}{\omega }
\newcommand{\bt}{\begin{theorem}}
\newcommand{\et}{\end{theorem}}
\newcommand{\be}{\begin{equation}}
\newcommand{\ee}{\end{equation}}
\newcommand{\bc}{\begin{corollary}}
\newcommand{\ec}{\end{corollary}}
\newcommand{\bp}{\begin{proof}}
\newcommand{\ep}{\end{proof}}
\newcommand{\bl}{\begin{lemma}}
\newcommand{\el}{\end{lemma}}

\title{Finite groups and Lie rings with a metacyclic Frobenius group of automorphisms}

\markright{}

\author{{E.\,I.~Khukhro}\\ \small Sobolev Institute of Mathematics, Novosibirsk,
630\,090, Russia\\[-1ex] \small khukhro@yahoo.co.uk \\
{N.\,Yu.~Makarenko\footnote{The second author  was supported by the Russian Foundation for Basic Research, project no.~13-01-00505.}}\\ \small Universit\'{e} de Haute
Alsace, Mulhouse, 68093, France, and \\ \small  Sobolev Institute of Mathematics,
Novosibirsk, 630\,090, Russia
\\[-1ex] \small  natalia\_makarenko@yahoo.fr}

\date{}
\maketitle
\subjclass{Primary 17B40, 20D45; Secondary 17B70, 20D15,
20F40}

\begin{center}{\it to
Victor Danilovich Mazurov on the occasion of his
70th birthday}
\end{center}

\begin{abstract}
Suppose that a finite group $G$ admits a Frobenius group of
automorphisms $FH$ of coprime order with cyclic kernel $F$ and
complement $H$ such that the fixed point subgroup $C_G(H)$ of
the complement is nilpotent of class $c$. It is proved that $G$
has a nilpotent characteristic subgroup of index bounded in terms of $c$,
$|C_G(F)|$, and $|F|$ whose nilpotency class is bounded in terms
of $c$ and $|H|$ only. This generalizes the previous theorem of the
authors and P.~Shumyatsky, where for the case of $C_G(F)=1$ the
whole group was proved to be nilpotent of $(c,|H|)$-bounded class.
Examples show that the condition of $F$ being cyclic is essential.
Results %!!
based on the classification  provide reduction to soluble groups. Then representation theory
arguments are used to bound the index of the Fitting subgroup. Lie
ring methods are used for nilpotent groups. A similar theorem on
Lie rings with a metacyclic Frobenius group of automorphisms $FH$
is also proved.
\end{abstract}

\keywords{finite group, Frobenius groups, automorphism, soluble, nilpotent, Clifford's theorem, Lie ring}

\section{Introduction}

Suppose that a finite group $G$ admits a Frobenius group of
automorphisms $FH$ of coprime order with cyclic kernel $F$ and
complement $H$. In a number of recent papers the structure of $G$
was studied under the assumption that the kernel acts without
nontrivial fixed points: $C_G(F)=1$. A fixed-point-free action of
$F$ alone was already known to imply many nice properties of $G$
(see more on this below). But the
`additional' action of the Frobenius complement $H$ suggested
another approach to the study of $G$. Namely, in the case $C_G(F)=1$, by Clifford's theorem
all $FH$-inva\-ri\-ant elementary abelian sections of $G$ are free
${\Bbb F}_pH$-modules (for various $p$),
and therefore it is natural to expect that many properties or
parameters of $G$ should be close to the corresponding properties
or parameters of $C_G(H)$, possibly also depending on $H$.
Prompted by Mazurov's problem 17.72 in Kourovka
Notebook \cite{kour}, several results of this nature were obtained
recently \cite{khu08, mak-shu10,khu10al, khu-ma-shu,
khu-ma-shu-DAN, shu-a4, shu-law, khu12ja,khu12al}, the properties
and parameters in question being the order, rank, Fitting
height, nilpotency class, and exponent. In particular, it
was proved in \cite{khu-ma-shu} that if $C_G(H)$ is nilpotent of
class $c$, then $G$ is nilpotent of $(c,|H|)$-bounded class (a special case of this result solving
part (a) of Mazurov's problem was proved earlier by the second author and Shumyatsky \cite{mak-shu10}).
Henceforth we write for brevity, say,
``$(a,b,\dots )$-bounded'' for ``bounded above by some function
depending only on $a, b,\dots $''.

An important next step is considering finite groups $G$ with a Frobenius group of automorphisms $FH$
in which the kernel $F$ no longer acts
fixed-point-freely but has a relatively small number of fixed points. Then it is natural to strive for
similar restrictions, in terms of the complement $H$ and its fixed points $C_G(H)$,
for a subgroup of index bounded in terms of $|C_G(F)|$ and other parameters:
``almost fixed-point-free'' action of $F$ implying that $G$ is ``almost''
as good as when $F$ acts fixed-point-freely.
Such restrictions for the order and rank of $G$ were recently obtained
in \cite{khu13}. In the present paper we deal with the nilpotency
class assuming that $FH$ is a metacyclic Frobenius group. Examples
in \cite{khu-ma-shu} show that such results cannot be obtained for
non-metacyclic $FH$, even in the case $C_G(F)=1$.

\begin{theorem} \label{t-g}
Suppose that a finite group $G$ admits a Frobenius group of
automorphisms $FH$ of coprime order with cyclic kernel $F$ and
complement $H$ such that the fixed-point subgroup $C_G(H)$ of the
complement is nilpotent of class $c$. Then $G$ has a
nilpotent characteristic subgroup of index bounded in terms of
$c$, $|C_G(F)|$, and $|F|$ whose nilpotency
class is bounded in terms of $c$ and $|H|$ only.
\end{theorem}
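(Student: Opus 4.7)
The plan is to execute the three-stage strategy announced in the abstract: first reduce, via the classification, to the case where $G$ is soluble; then use representation-theoretic arguments (Clifford's theorem) to produce the Fitting subgroup $F(G)$ as an $FH$-invariant nilpotent subgroup of index bounded in terms of $c$, $|C_G(F)|$ and $|F|$; and finally deploy Lie ring techniques inside $F(G)$ to cut out a characteristic subgroup of $(c,|H|)$-bounded nilpotency class.

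For the reduction to solubility, the aim is to produce a characteristic soluble subgroup $R\le G$ of $(|C_G(F)|,|F|)$-bounded index. The coprimeness of $|FH|$ and $|G|$, together with the smallness of $C_G(F)$ and the Frobenius action of $F$ on chief factors, should force the non-soluble radical of $G$ to have bounded order, since a non-abelian simple group admitting a coprime Frobenius action with few fixed points is severely restricted by CFSG. The analogous CFSG-based reductions carried out in \cite{khu13} for order and rank bounds provide the appropriate tools. Replacing $G$ by $R$, one may assume $G$ is soluble.

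Assuming $G$ soluble, the next step is to bound $|G:F(G)|$. Working with the $FH$-invariant Fitting series and with $FH$-invariant elementary abelian $p$-sections $V$, Clifford's theorem for the cyclic kernel $F$ gives a decomposition $V = C_V(F) \oplus [V,F]$, and the action of $H$ on $[V,F]$ is controlled by its free action on the $F$-eigenspaces, just as in the fixed-point-free setting exploited in \cite{khu-ma-shu}. Combining this with the hypothesis that $C_G(H)$ is nilpotent of class $c$ and with standard Hall--Higman-type fixed-point estimates for coprime automorphisms, one bounds the Fitting height of $G$ in terms of $c$ and $|H|$, and then bounds $|G:F(G)|$ in terms of $c$, $|C_G(F)|$ and $|F|$. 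The key structural point is that the ``error'' contributions at each stage of the Fitting series are concentrated in the bounded pieces $C_{\bullet}(F)$.

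Set $N:=F(G)$ and pass to the associated Lie ring $L=L(N)$, on which $FH$ continues to act with $C_L(H)$ of class at most $c$ and $|C_L(F)|$ bounded. The Lie-ring statement to be proved is then: such an $L$ contains a characteristic ideal of $(|F|,|C_L(F)|)$-bounded index whose nilpotency class is $(c,|H|)$-bounded. In the special case $C_L(F)=0$ this is the Lie-ring incarnation of the theorem from \cite{khu-ma-shu}. The main obstacle, which I expect to be the hardest part of the paper, is adapting that fixed-point-free argument to the \emph{almost} fixed-point-free setting: one must isolate a characteristic ideal of bounded codimension on which $F$ acts without nonzero fixed points, and then check that the generalized centralizer and commutator estimates of \cite{khu-ma-shu} — which rely on the eigenspace decomposition $L\otimes k = \bigoplus_{\omega^{|F|}=1} L_\omega$ over a suitable extension — carry over with only bounded-size defects coming from $C_L(F)$. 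Lifting the resulting characteristic ideal back through the associated-ring correspondence produces the required nilpotent characteristic subgroup of $G$.
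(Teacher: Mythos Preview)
The decisive gap is in your final step. You write that after proving a Lie-ring theorem for $L=L(N)$, ``lifting the resulting characteristic ideal back through the associated-ring correspondence produces the required nilpotent characteristic subgroup of $G$.'' This lifting does not work: there is in general no correspondence between subrings (or ideals) of the associated Lie ring $L(G)$ and subgroups of $G$, so a subring of bounded index and bounded nilpotency class need not come from, or give rise to, any subgroup of $G$ with comparable properties. The paper explicitly flags this obstacle and devotes most of \S4 to circumventing it. The workaround is to construct generalized centralizers $A(i)$ as \emph{subgroups of $G$} (not merely as subrings of $L(G)$), with representatives fixed simultaneously in $G$ and in $L$, and to run an induction on a composite parameter $(m,\bar m,t)$ encoding fixed-point data across the lower central series together with the set of realizable ``patterns'' in $L_0$. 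If this parameter drops for some $A(i)$, the induction hypothesis applies to $A(i)$; if it never drops (the ``critical'' case), one proves via Corollary~\ref{group-cor} and the KMS-transformations that the \emph{entire} Lie ring $L(G)$, and hence $G$ itself, is nilpotent of $(c,|H|)$-bounded class --- so no lifting is ever required.

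Two smaller points. Your reduction to soluble $G$ is off-target: the paper does not pass to a soluble subgroup of bounded index via the $F$-action; rather, since $H$ acts coprimely with $C_G(H)$ nilpotent, the Wang--Chen theorem gives that $G$ is soluble outright. And in the second step the paper bounds $|G:F(G)|$ in terms of $|C_G(F)|$ and $|F|$ alone (Theorem~\ref{t-n}), without detouring through a Fitting-height bound in terms of $c$ and $|H|$; the argument combines the Hartley--Isaacs theorem on irreducible sections with a Clifford-theoretic analysis of Wedderburn components under $FH$ (Proposition~\ref{p1}), which is itself proved by induction on $|H|$.
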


In the proof, reduction to soluble groups is given by results %!!
based on the classification (\cite{hart} or \cite{wang-chen}). Then
representation theory arguments are used to bound the index of the
Fitting subgroup, thus reducing the proof to the case of a
nilpotent group $G$. We state separately the corresponding
Theorem~\ref{t-n}, since it gives a better bound
for the index of the Fitting subgroup and does not require the
Frobenius group $FH$ to be metacyclic.

For nilpotent groups, a Lie ring method is used. A similar
theorem on Lie rings is also proved, although
its application to the case of nilpotent group in Theorem~\ref{t-g} is not straightforward
and requires additional efforts.

\begin{theorem} \label{t-l}
Suppose that a finite Frobenius group
$FH$
with cyclic kernel $F$ and
complement $H$ acts by automorphisms on a
Lie ring $L$ in whose ground ring $|F|$ is invertible.
If the fixed-point subring $C_L(H)$ of the
complement is nilpotent of class $c$ and the fixed-point subring of the kernel $C_L(F)$ is finite of order $m$,
then $L$ has a nilpotent Lie subring whose index in the additive group of $L$ is bounded in terms of
$c$, $m$, and $|F|$ and whose nilpotency class is bounded in terms of $c$ and $|H|$ only.
\end{theorem}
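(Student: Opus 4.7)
The plan is to reduce to the fixed-point-free case treated in \cite{khu-ma-shu} and then extract a Lie subring of bounded additive index by a centralizer argument exploiting the finiteness of $C_L(F)$. First I would extend the ground ring of $L$ by adjoining a primitive $|F|$-th root of unity $\omega$---possible since $|F|$ is invertible---producing a Lie ring $\widetilde L$ retaining the $FH$-action, in which $C_{\widetilde L}(H)$ is still nilpotent of class $c$ and $C_{\widetilde L}(F)$ is finite of order bounded in terms of $m$ and $|F|$. A generator of the cyclic group $F$ then acts semisimply with eigenvalues among the $\omega^i$, yielding the weight decomposition
\[
\widetilde L = L_0 \oplus L_1 \oplus \dots \oplus L_{|F|-1}
\]
with $L_0 = C_{\widetilde L}(F)$, $[L_i, L_j] \subseteq L_{i+j\pmod{|F|}}$, and the Frobenius complement $H$ permuting the non-zero weights freely in orbits of size $|H|$.

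Next I would invoke the fixed-point-free theorem of \cite{khu-ma-shu} in the following combinatorial form: there is $N = N(c, |H|)$ such that every commutator of length $N$ in weight-homogeneous elements whose weights do not sum to $0$ modulo $|F|$ vanishes. Granting this, each $N$-fold commutator in $\widetilde L$ decomposes as a sum of weight-homogeneous commutators, and only those with weight sum $\equiv 0$ survive; they all lie in $L_0$. Hence $\gamma_N(\widetilde L) \subseteq L_0$, so $\gamma_N(\widetilde L)$ is a finite abelian group of bounded order.

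Third, since $\gamma_N(\widetilde L)$ is a finite ideal of bounded order, the adjoint action $\widetilde L \to \mathrm{End}\bigl(\gamma_N(\widetilde L)\bigr)$ is a homomorphism of abelian groups into a finite set, so its kernel
\[
K := C_{\widetilde L}\bigl(\gamma_N(\widetilde L)\bigr)
\]
is an ideal of bounded additive index in $\widetilde L$. Since $[K, \gamma_N(\widetilde L)] = 0$ and $\gamma_N(K) \subseteq \gamma_N(\widetilde L)$, we obtain $\gamma_{N+1}(K) = 0$, so $K$ is nilpotent of class at most $N$. The intersection $L \cap K$ is then a Lie subring of $L$ of additive index bounded in $c, m, |F|$ and of nilpotency class at most $N = N(c, |H|)$, as desired.

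The main obstacle is the combinatorial claim in the second step: \cite{khu-ma-shu} is stated under $C_L(F) = 0$, and the obvious reduction via the quotient $\widetilde L/\langle L_0\rangle$ is delicate because the hypothesis on $C_{\widetilde L}(H)$ does not survive the quotient automatically unless $|H|$ is invertible in the ground ring (which is not assumed here). Hence one must either adapt the weight-combinatorial argument of \cite{khu-ma-shu} to separate its weight-non-zero conclusions from the hypothesis $L_0 = 0$, or perform the quotient carefully via an auxiliary Clifford-type analysis of the $FH$-module structure.
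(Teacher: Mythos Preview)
Your second step is not merely hard to justify---it is false. The combinatorial content of \cite{khu-ma-shu} is \emph{not} that long homogeneous commutators with nonzero weight-sum vanish; it is only that long commutators in elements of nonzero weight can be rewritten as linear combinations of commutators each containing an initial segment lying in $L_0$ (this is Proposition~\ref{combinatorial} in the paper). A rewriting of this kind in no way forces $\gamma_N(\widetilde L)\subseteq L_0$. For a concrete counterexample, take $L=\mathfrak{sl}_2(\mathbb{F}_7)$ with standard basis $h,e,f$, graded by $h\in L_0$, $e\in L_1$, $f\in L_2$, with $F=\langle\varphi\rangle$ of order $3$ acting via a primitive cube root of unity and $H=\langle\sigma\rangle$ of order $2$ swapping $e\leftrightarrow f$ (so $\sigma(h)=-h$). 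Then $FH\cong S_3$ is Frobenius, $C_L(H)=\langle e+f\rangle$ is abelian, and $C_L(F)=\langle h\rangle$ has order $7$; yet $[e,\underbrace{h,\dots,h}_{k}]=(-2)^k e\ne 0$ for all $k$, so $\gamma_N(L)\not\subseteq L_0$ for any $N$. Your centralizer argument in step three therefore never gets off the ground, because $\gamma_N(\widetilde L)$ need not be finite.

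The paper's proof does begin as you do (extension by $\omega$, eigenspace decomposition, the $H$-action permuting components), and it does extract a combinatorial consequence of \cite{khu-ma-shu}. But that consequence (Proposition~\ref{combinatorial}) only yields the rewriting above, and the remaining work is substantial: one iterates this ``KMS-transformation'' (Proposition~\ref{kh-ma-shu-transformation}) and builds, inside each $L_j$ with $j\ne 0$, a tower of additive subgroups $L_j(1)\geq L_j(2)\geq\cdots\geq L_j(T)$ of $(c,m,|F|)$-bounded index---the \emph{graded centralizers}---defined so that elements of $L_j(t)$ annihilate, in suitable commutators landing in $L_0$, a fixed finite set of ``representatives'' of lower levels. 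The desired subring is $Z=\langle L_1(T),\dots,L_{n-1}(T)\rangle$; its nilpotency of $(c,|H|)$-bounded class is proved by applying the KMS-transformation and a collecting process to reduce to commutators in (quasi)representatives of pairwise distinct levels, which then vanish by the centralizer property. Your shortcut via $C_{\widetilde L}(\gamma_N(\widetilde L))$ would bypass all of this, but it rests on a false premise.
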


The functions bounding the index and nilpotency class in the theorems
can be estimated from above explicitly, although we do not write out these estimates here.
For Lie algebras we do not need the condition that $|F|$ be invertible.

\begin{corollary} \label{c-l}
Suppose that a finite Frobenius group
$FH$ with cyclic kernel $F$ and
complement $H$ acts by automorphisms on a
Lie algebra $L$ in such a way that the fixed-point subalgebra $C_L(H)$
is nilpotent of class $c$ and the fixed-point subalgebra $C_L(F)$ has  finite dimension $m$.
Then $L$ has a nilpotent Lie subalgebra of finite codimension bounded in terms of
$c$, $m$, and $|F|$ and whose nilpotency class is bounded in terms of $c$ and $|H|$ only.\end{corollary}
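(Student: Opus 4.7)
The plan is to deduce Corollary~\ref{c-l} from Theorem~\ref{t-l}. A Lie algebra $L$ over a field $k$ is in particular a Lie ring, and the hypothesis $\dim_k C_L(F)=m$ controls the additive structure of the fixed-point subring in the way required by Theorem~\ref{t-l}. The sole obstacle is that Theorem~\ref{t-l} requires $|F|$ to be invertible in the ground ring, whereas no restriction is placed on $\operatorname{char}(k)$ in the corollary.

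First I would dispose of the easy case: if $\operatorname{char}(k)=0$, or if $\operatorname{char}(k)=p$ does not divide $|F|$, then $|F|$ is already invertible in $k$, and Theorem~\ref{t-l} applies directly. A quick inspection of the theorem's construction shows that the nilpotent Lie subring it produces is built from kernels and images of $FH$-equivariant $k$-linear projectors, so it is automatically a $k$-submodule, hence a Lie subalgebra; the bounded additive index then becomes bounded $k$-codimension.

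For the remaining case $\operatorname{char}(k)=p\mid|F|$, the averaging projector $|F|^{-1}\sum_{f\in F}f$ is not available, so one has to argue differently. The plan here is to extract an $FH$-invariant Lie subalgebra $L_0\subseteq L$ of $k$-codimension bounded in terms of $m$ and $|F|$ on which $F$ acts without nonzero fixed points. Working over $\bar k$, decompose $L$ into generalized eigenspaces of a generator of the cyclic group $F$; the generalized eigenspace for the eigenvalue $1$ has dimension $\dim C_L(F)=m$, so splitting it off (together with its $H$-translates, to preserve $FH$-invariance) produces such an $L_0$. On $L_0$ the kernel $F$ acts fixed-point-freely, and one invokes the Lie-algebra version of the fixed-point-free result of \cite{khu-ma-shu}, whose proof requires only averaging over $H$ (whose order is coprime to $p$ because $FH$ is a Frobenius group), not over $F$. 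This yields the desired nilpotent Lie subalgebra of $(c,|H|)$-bounded class inside $L_0$, and hence inside $L$, with codimension bounded in terms of $c$, $m$, and $|F|$.

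The hardest step will be the characteristic-$p$ case: constructing the $FH$-invariant bounded-codimension subalgebra $L_0$ in spite of the non-semisimplicity of the $F$-action, and checking that the fixed-point-free theorem of~\cite{khu-ma-shu} goes through in the Lie algebra setting without dividing by $|F|$. The coprimality $\gcd(|F|,|H|)=1$ forced by the Frobenius structure is what makes this last point feasible.
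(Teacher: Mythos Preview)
Your plan for the characteristic-$p$ case has a genuine gap. You propose to set $L_0=\bigoplus_{\zeta\neq 1}E_\zeta$, the sum of the generalized eigenspaces of a generator $\varphi$ of $F$ for eigenvalues $\zeta\neq 1$, and to work inside $L_0$. But $L_0$ is \emph{not} a Lie subalgebra: for any $\zeta\neq 1$ we have $[E_\zeta,E_{\zeta^{-1}}]\subseteq E_1$, so brackets of elements of $L_0$ can land in the piece you threw away. (Nor is $E_1$ an ideal, so passing to a quotient does not help either.) A smaller slip: the generalized $1$-eigenspace does not have dimension $m$; it equals $C_L(\varphi^{p^a})$ and its dimension is only bounded by $m\cdot p^a$ via the Jordan block argument, where $p^a$ is the $p$-part of $|F|$.

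The paper handles this case by a different and simpler reduction. Write $F=\langle\psi\rangle\times\langle\chi\rangle$ with $\langle\psi\rangle$ the Sylow $p$-subgroup and $\langle\chi\rangle$ the Hall $p'$-subgroup. Then $A:=C_L(\chi)$ is $\psi$-invariant with $C_A(\psi)=C_L(F)$, so $\dim A\leq m\,|\psi|$ by Jordan normal form. Now simply \emph{replace $F$ by $\langle\chi\rangle$}: the pair $\langle\chi\rangle H$ is still a Frobenius group with cyclic kernel (if $\langle\chi\rangle=1$ then $\dim L$ itself is $(m,|F|)$-bounded and there is nothing to prove), $C_L(H)$ is unchanged, and $\dim C_L(\chi)$ is $(m,|F|)$-bounded. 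Since $|\chi|$ is coprime to $p$, one is back in the ``easy'' case. There is no need to carve out a fixed-point-free subalgebra at all.

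One further remark on your easy case: you cannot literally invoke Theorem~\ref{t-l}, because its hypothesis is that $C_L(F)$ has finite \emph{order} $m$, which fails over any infinite field when $\dim C_L(F)>0$. What is true (and what the paper does) is that the \emph{proof} of Theorem~\ref{t-l} goes through with codimensions in place of indices; the key change is that representatives must be chosen as bases of the subspaces spanned by values of patterns, rather than as all values. Your remark about $k$-linear projectors points in the right direction but does not by itself justify quoting the theorem as a black box.
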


Earlier in \cite{mak-khu13} we proved this
under the additional condition that the characteristic of $L$ be coprime to $|H|$.

We now discuss in more detail the context of the two parts of the proof of Theorem~\ref{t-g}, which are
quite different, the first about bounding the index
of the Fitting subgroup by methods of representation theory, and the second about bounding the nilpotency
class of a subgroup of bounded index by Lie ring methods.

Let $G$ be a finite (soluble) group $G$ admitting a
soluble group of automorphisms $A$ of coprime order.
Connections between the Fitting heights of $G$ and $C_G(A)$,
depending also on the number $\alpha (A)$ of prime factors in
$|A|$, were first established by Thompson \cite{th2} and later improved
by various authors, including linear bounds by Kurzweil
\cite{kurz} and Turull \cite{tu}. The Hartley--Isaacs theorem
\cite{ha-is} (using Turull's result \cite{tu}) says that
$|G:F_{2\alpha (A)+1}|$ is bounded in terms of $\alpha (A)$ and
$|C_G(A)|$. These results can of course be applied both to the action of
$F$ and of $H$ in our Theorem~\ref{t-g}. But our conclusion is in a sense much
stronger, bounding the index of a nilpotent subgroup, rather than of a subgroup of
Fitting height depending on $\alpha (F)$ or $\alpha (H)$. Of course, this is
due to the stronger hypotheses of combined actions of the kernel and
the complement, neither of which alone is sufficient.

Now suppose that the group $G$ is already nilpotent and admits an
(almost) fixed-point-free group of automorphisms $A$. Then
further questions arise about bounding the nilpotency class or
the derived length of $G$ (or of a subgroup of bounded index).
Examples show that such bounds can only be achieved if $A$ is
cyclic. By Higman's theorem \cite{hi} a (locally) nilpotent group with a
fixed-point-free automorphism of prime order $p$ is nilpotent of
$p$-bounded class. This immediately
follows from the Higman--Kreknin--Kostrikin theorem
\cite{hi,kr,kr-ko} saying that a Lie ring with a
fixed-point-free automorphism of prime order $p$ is nilpotent of
$p$-bounded class.

The first author \cite{kh1, kh2}
proved that if a periodic (locally) nilpotent group $G$ admits an
automorphism $\varphi$ of prime order $p$ with $m=|C_G(\varphi )|$ fixed
points, then $G$ has a nilpotent subgroup of $(m,p)$-bounded index
and of $p$-bounded class. (The result was later extended by Medvedev
\cite{me} to not necessarily periodic locally nilpotent groups.)
This group result was also based on a
similar theorem on Lie rings in \cite{kh2}, albeit also on
additional arguments, as in general there is no good
correspondence between subrings of the associated Lie ring and
subgroups of a group.
The proofs in \cite{kh2}
were based on a method of graded centralizers; this method was later developed by the
authors \cite{khmk4, mk05,
khmk1, khmk2,
khmk3,  khmk5} in further studies of almost fixed-point-free
automorphisms of Lie rings and nilpotent groups. It is this method that we also use
in the proofs of both the Lie ring Theorem~\ref{t-l} and the
nilpotent case of the group Theorem~\ref{t-g}.

There is apparent similarity between the relation of the above-mentioned theorem
on ``almost fixed-point-free'' automorphism of prime order to the ``fixed-point-free''
Higman--Kreknin--Kostrikin theorem and the relation of the results
of the present paper on a Frobenius group of automorphisms with
``almost fixed-point-free'' kernel to the
Khukhro--Makarenko--Shumyatsky theorem \cite{khu-ma-shu} on a Frobenius group of
automorphisms with fixed-point-free kernel: in both cases a bound
for the nilpotency class of the whole group is replaced by a bound
for the nilpotency class of a subgroup of bounded index. In fact,
this similarity goes deeper than just the form of the results: the
method of proof of the Lie ring Theorem~\ref{t-l} and of the
nilpotent case of the group Theorem~\ref{t-g} is a modification of
the aforementioned method of graded centralizers used in
\cite{kh2}. In both cases, the previous nilpotency results are used
as certain combinatorial facts about Lie rings with finite cyclic
grading, which give rise to certain transformations of commutators.
The HKK-transformation in \cite{kh2} was based on the Higman--Kreknin--Kostrikin theorem, and
in the present paper we use the KMS-transformation based on the Khukhro--Makarenko--Shumyatsky theorem \cite{khu-ma-shu},
combined with the machinery of the method of graded centralizers, with certain modifications.

In the present paper the cyclic group of automorphisms $F$ is of
arbitrary (composite) order. Recall that it is still an open
problem to bound the derived length of a finite group with a
fixed-point-free automorphism. So far this is known only in the above-mentioned case of
automorphism of prime order (and of order 4 due to Kov\'acs). The
problem is already reduced to nilpotent groups, and there is
Kreknin's theorem \cite{kr} giving bounded solubility of a Lie ring
with a fixed-point-free automorphism, but the existing Lie ring
methods cannot be used for bounding the derived length in general.
The authors \cite{khmk5} also proved almost solubility of Lie rings and
algebras admitting an almost regular automorphism of finite order, with
bounds for the derived length and codimension of a soluble subalgebra,
but for groups even the fixed-point-free case remains open. The latter result can
be applied to the Lie ring in Theorem~\ref{t-l}, but we need $(c,|H|)$-bounded nilpotency of a subring,
rather than $|F|$-bounded solubility.
 It is the combined actions of the kernel and
 the complement that have to be used here, neither of which alone is sufficient.

There remain several open problems about groups $G$ (and Lie
rings) with a Frobenius group of automorphisms $FH$ (with kernel
$F$ and complement $H$). For example, even in the case of a
2-Frobenius group $GFH$ (when $GF$ is also a Frobenius group),
Mazurov's question 17.72(b) remains open: is the exponent of $G$
bounded in terms of $|H|$ and the exponent of $C_G(H)$? Other open questions
in the case $C_G(F)=1$ include bounding the derived length of $G$ in terms of
that of $C_G(H)$ and $|H|$. Such questions are already reduced to nilpotent groups, since
it was proved in \cite{khu12ja} that then
the Fitting height of $G$ is equal to the Fitting height of
$C_G(H)$.

One notable difference of the results of
the present paper from the previous results in the case $C_G(F)=1$ is
that we impose the additional condition that the order of $G$ and
$FH$ be coprime. Although Hartley's theorem \cite{hart} %!!
would still provide
reduction to soluble groups without the coprimeness condition,
there are further difficulties that for now remain unresolved.
Note, for example, that it is still unknown if the Fitting height
of a finite soluble group admitting an automorphism of order $n$
with $m$ fixed points is bounded in terms of $m$ and $n$ (in the
coprime case even a better result is a special case
of the Hartley--Isaacs theorem \cite{ha-is}).

\section{Almost nilpotency}\label{s-n}

In this section we prove the ``almost nilpotency'' part of
Theorem~\ref{t-g}. It makes sense to state a separate theorem, as
the bound for the index of the Fitting subgroup depends only on
$|C_G(F)|$ and $|F|$. (Dependence on the nilpotency class $c$ of
$C_G(H)$ appears in addition in Theorem~\ref{t-g}, where a
nilpotent subgroup of $(c,|H|)$-bounded class is required.)
Moreover, in the following theorem, $FH$ is an arbitrary,
not necessarily metacyclic, Frobenius
group.

\begin{theorem} \label{t-n}
Suppose that a %!!
finite group $G$ admits a Frobenius group
of
automorphisms $FH$ of coprime order with
kernel $F$ and
complement $H$ such that the fixed-point subgroup $C_G(H)$ of the
complement is nilpotent. Then the index of the Fitting subgroup $F(G)$ is
bounded in terms of
$|C_G(F)|$ and $|F|$.
\end{theorem}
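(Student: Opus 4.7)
The plan is to follow the two-stage strategy outlined in the introduction: reduce first to soluble $G$ by classification-based results, then use representation theory to bound $|G:F(G)|$. For the first stage, I would invoke Hartley's CFSG-based theorem from \cite{hart} (or alternatively \cite{wang-chen}). Since $F$ acts coprimely with $|C_G(F)|$ bounded, the non-abelian composition factors of $G$ admitting such a Frobenius action are severely constrained; a standard chief-series argument then shows that the soluble radical has index bounded in terms of $|C_G(F)|$, $|F|$, and $|H|$. Note that $|H|$ is itself bounded in terms of $|F|$, since $H$ embeds into $\mathrm{Aut}(F)$ when $F$ is cyclic, so the reduction bound ultimately depends only on $|C_G(F)|$ and $|F|$.

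With $G$ now soluble, I would apply the Hartley--Isaacs theorem \cite{ha-is} to the $F$-action: the index $|G : F_{2\alpha(F)+1}(G)|$ is $(|F|,|C_G(F)|)$-bounded. After passing to this normal subgroup of bounded index we may assume the Fitting height $h(G)$ is bounded in terms of $|F|$ alone, and it suffices to show by induction on $h(G)$ that $|G:F(G)|$ is bounded. By passing to an $FH$-invariant section, the inductive step reduces to the configuration $G = UV$, where $V = F(G)$ is an elementary abelian $q$-group, $U$ is a $p$-group for some prime $p \neq q$ acting faithfully on $V$, and we must bound $|U|$. Clifford's theorem applied to the coprime $F$-action gives $V = C_V(F) \oplus [V,F]$ with $|C_V(F)| \leq |C_G(F)|$; on $[V,F]$, Clifford applied to the Frobenius pair $FH$ shows that each $FH$-irreducible summand is a free $\F_q H$-module. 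The nilpotency of $C_G(H) = C_U(H)C_V(H)$ combined with $(|U|,|V|)=1$ yields $[C_U(H),C_V(H)]=1$, and exploiting this together with the rigid $FH$-module structure of $[V,F]$ and the faithfulness of $U$ on $V$ should embed $C_U(H)$ into a bounded-order group arising from $\mathrm{Aut}(C_V(F))$. A parallel Clifford analysis of $U/\Phi(U)$ as an $FH$-module then upgrades this to a bound on $|U|$ itself, using coprime lifting to climb the nilpotency class of $U$.

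The main obstacle I anticipate is this last representation-theoretic lifting: converting the nilpotency of $C_G(H)$ and the smallness of $C_G(F)$ into a bound on $|U|$, rather than merely on $|C_U(H)|$, will require a delicate analysis of the Frobenius structure on the isotypic decomposition of $V$ and $U/\Phi(U)$, careful handling of the non-trivial fixed-point contributions $C_V(F)$ and $C_U(F)$, and bounds that are uniform as the residue characteristics $p, q$ vary over the chief factors in the induction on Fitting height. A secondary difficulty lies in the initial CFSG reduction when $|C_G(F)|$ is large: the chief-series argument must be arranged so that both the non-abelian layers and the abelian layers produce contributions bounded by a common explicit function of $|C_G(F)|$ and $|F|$.
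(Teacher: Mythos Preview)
Your two-stage strategy matches the paper's, but the soluble stage has real gaps. First, the splitting $V=C_V(F)\oplus[V,F]$ is only an $F$-decomposition, not a $UF$-decomposition; $[V,F]$ need not be $U$-invariant, so you cannot isolate the action of $C_U(H)$ on it. The paper avoids this by decomposing the Frattini quotient of $O_{p',p}(G)/O_{p'}(G)$ into irreducible $\mathbb{F}_p\bar GF$-submodules and grouping them according to whether $C(F)$ vanishes; Hartley--Isaacs bounds the piece $Y$ where $C(F)\ne 0$, and the complementary $\bar GFH$-invariant piece $Z$ genuinely has $C_Z(F)=0$.

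Second, you have the relative difficulty of the two remaining steps backwards. The passage from ``$C_U(H)$ is small'' to ``$|U|$ is bounded'' does \emph{not} require climbing the nilpotency class of $U$ (which would fail, since that class is not bounded a~priori): when $C_V(F)=0$ and $C_U(H)=1$, the free $\mathbb{F}_pH$-structure on $[U/\Phi(U),F]$ forces that piece to vanish, so $F$ is trivial on $U/\Phi(U)$, hence on $U$ by coprimeness and the Frattini argument, giving $|U|\le m$ immediately---this is the paper's Proposition~2.4. By contrast, the step you treat as routine---showing that any $H$-fixed $q$-element above $V$ acts trivially on $V$ when $C_V(F)=1$---is in fact the technical heart (Proposition~2.3): it is proved by passing to a splitting field, analysing the $H$-orbits on the Wedderburn components of $V$, and an induction on $|H|$. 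Incidentally, the paper organizes the soluble case not by induction on Fitting height but prime by prime via $F(G)=\bigcap_p O_{p',p}(G)$: for $p\nmid m$ Proposition~2.4 yields $G=O_{p',p}(G)$, and only the boundedly many primes $p\mid m$ require the $Y$/$Z$ argument. Finally, the reduction to solubility is simpler than you indicate: Wang--Chen applied to the coprime $H$-action (with $C_G(H)$ nilpotent, hence soluble) gives that $G$ is soluble outright, with no chief-series bookkeeping.
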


By the result of Wang and Chen \cite{wang-chen} %!!
based on the classification (applied to the coprime action of $H$ on $G$), the group $G$ is soluble.
Further proof in some parts resembles the proof of
\cite[Theorem~2.7(c)]{khu-ma-shu} and \cite[Theorem~2.1]{khu12ja}, where the case of $C_G(F)=1$ was considered.
But some arguments in \cite{khu-ma-shu} do not work because a certain section $Q$ here cannot be
assumed to be abelian, and some arguments in \cite{khu12ja} cannot be
applied as $F$ is no longer fixed-point-free everywhere. Instead,
an argument in \cite{khu10al} is adapted to our situation (although the result of \cite{khu10al} was superseded by
\cite{khu12ja}).

We begin with some preliminaries. Suppose that a group $A$
acts by automorphisms on a finite group $G$ of coprime order:
$(|A|,|G|)=1$. For every prime $p$, the group $G$ has an $A$-invariant Sylow $p$-subgroup.
The fixed points of the induced action of $A$
on the quotient $G/N$ by an $A$-invariant normal subgroup are
covered by fixed points of $A$ in $G$, that is, $ C_{G/N}(A)=C_G(A)N/N$.
A similar property also holds for a finite group $A$ of linear
transformations acting on a vector space over a field of
characteristic coprime to $|A|$.
These well-known properties of coprime action will be used without special references.

The following lemma is a consequence of Clifford's theorem.

\begin{lemma}[{\cite[Lemma~2.5]{khu-ma-shu}}]\label{l-free}
If a Frobenius group $FH$ with kernel $F$ and complement $H$ acts
by linear transformations on a vector space $V$ over a field $k$
in such a way that $C_V(F)=0$, then $V$ is a free $kH$-module.
\end{lemma}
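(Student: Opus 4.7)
The plan is to combine Clifford's theorem applied to $F\trianglelefteq FH$ with the defining Frobenius property $C_F(h)=1$ for every $h\in H\setminus\{1\}$.

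The starting observation is that in $kFH$ the element $\widehat F:=\sum_{f\in F}f$ sends $V$ into $C_V(F)=0$, so $\widehat F$ annihilates $V$. For any fixed $h\in H\setminus\{1\}$ consider the map
\[
\rho_h\colon F\to F,\qquad f\mapsto f\cdot hf^{-1}h^{-1}.
\]
If $\rho_h(f_1)=\rho_h(f_2)$ then $f_2^{-1}f_1\in C_F(h)=1$, so $\rho_h$ is injective and hence bijective; it follows that $\sum_{f\in F}fhf^{-1}=\widehat Fh$ inside $kFH$. Since conjugate elements of a group act with the same trace,
\[
|F|\cdot\mathrm{tr}_V(h)=\sum_{f\in F}\mathrm{tr}_V(fhf^{-1})=\mathrm{tr}_V(\widehat F h)=0.
\]

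In the coprime setting (in which $|FH|$ is invertible in $k$, which is the case in all applications of the paper since $|FH|$ is coprime to $|G|$), one divides by $|F|$ to conclude $\mathrm{tr}_V(h)=0$ for every $h\neq 1$; the character of $V|_H$ then coincides with a multiple of the regular character, so $V\cong (kH)^n$. To handle the general case I would argue structurally via Clifford: decompose $V|_F=\bigoplus_i V_i$ into $F$-isotypic components (working on composition factors of $V$ as a $kFH$-module if $V$ itself is not semisimple), noting that $H=FH/F$ permutes them. Since $C_V(F)=0$ no $V_i$ is the trivial isotypic component, and the Frobenius property $C_F(h)=1$ forces the $H$-stabilizer of each $V_i$ to be trivial; hence each $H$-orbit has size $|H|$ and the corresponding sum $\bigoplus_{h\in H}V_i^h$ is isomorphic to $\mathrm{Ind}_1^H V_i\cong kH\otimes_k V_i$, a free $kH$-module.

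The main obstacle is the triviality-of-inertia step in the modular setting: if $1\neq t\in H$ stabilized the isomorphism class of a non-trivial simple $kF$-submodule $W$, one would need to derive a contradiction using $C_F(t)=1$, typically by exhibiting a non-zero $F$-fixed vector in $W$. In the coprime case the trace computation above bypasses this issue entirely, and this is the only generality needed for the Frobenius-group applications in the remainder of the paper.
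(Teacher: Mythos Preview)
The paper does not give a proof; it quotes the lemma from \cite{khu-ma-shu} with the remark that it is ``a consequence of Clifford's theorem.'' Your trace argument is correct and self-contained when $|FH|$ is invertible in $k$, and as you rightly observe this is the only case used in the paper. The identity $\sum_{f\in F}fhf^{-1}=\widehat F\,h$ in $kFH$ is a pleasant way to encode the Frobenius condition and yields a proof that bypasses Clifford's theorem altogether --- a genuine alternative to the route the paper alludes to.

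Your Clifford sketch for the unrestricted statement is on the right track: the reduction to simple $kFH$-modules is legitimate even when $\mathrm{char}\,k$ divides $|H|$, since an extension of free $kH$-modules splits and is therefore again free. The obstacle you isolate is indeed the crux, but your proposed mechanism --- producing a non-zero $F$-fixed vector in $W$ --- cannot work, as a non-trivial simple module over the semisimple ring $kF$ has no such vector. The standard fix is a counting argument. First reduce to $\mathrm{char}\,k\nmid|F|$: if $p=\mathrm{char}\,k$ divides $|F|$, pass to the Hall $p'$-subgroup $Q\leq F$ (nilpotency of $F$ gives $C_V(Q)=0$, and $QH$ is again Frobenius). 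Then Brauer's permutation lemma says each $h\in H\setminus\{1\}$ fixes as many simple $\bar kF$-modules as conjugacy classes of $F$; your own bijection $\rho_h$ shows that every element of the coset $hF$ is $F$-conjugate to $h$ and hence has trivial centralizer in $F$, so the only $h$-fixed class is $\{1\}$. Thus the inertia group of every non-trivial simple $kF$-module is trivial, and the rest of your sketch goes through.
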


It is also convenient to use the following theorem of Hartley and Isaacs~\cite{ha-is}.

\begin{theorem}[{\cite[Theorem~B]{ha-is}}] For an arbitrary finite
group $A$ there exists a number $ \delta (A)$
depending only on $A$ with the following property. Let $A$ act
on~$G$, where $G$ is a finite soluble group such that
$(|G|,|A|)=1$, and let $k$ be any field of characteristic not
dividing~$|A|$. Let $V$ be any irreducible $kAG$-module and let
$S$ be any $kA$-module that appears as a component of the
restriction~$V_A$. Then
 $\dim _kV\leqslant \delta (A) m_S$, where $
m_S$ is the multiplicity of $S$ in~$V_A$.
\end{theorem}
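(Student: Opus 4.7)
The plan is to prove this by induction on $|G|$, combining Clifford theory with the coprime hypothesis $(|G|,|A|)=1$ to track how an $A$-multiplicity in $V$ propagates through the chief $A$-factors of $G$. Standard preliminary reductions let us assume $V$ is faithful for $AG$, since the kernel of $G\to GL(V)$ is $A$-invariant and can be factored out.

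The first substantive step is an imprimitivity reduction. If $V=\mathrm{Ind}_H^G W$ for a proper $A$-invariant subgroup $H<G$, then $\dim V=[G:H]\dim W$, and Mackey's formula rewrites $V|_A$ as a sum over $(A,H)$-double cosets of modules induced from $W|_{A\cap H^g}$. Since the number of $A$-orbits on $G/H$ is $|A|$-bounded, the multiplicity $m_S$ in $V|_A$ is, up to an $|A|$-bounded factor, the multiplicity of a suitable $A\cap H$-constituent in $W$, and induction on $|G|$ applies to $W$ as a $(A\cap H)H$-module. This reduces the problem to the case where $V$ is primitive (no such $H$ exists).

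In the primitive case I would take a minimal $A$-invariant normal subgroup $N$ of $G$; by solubility and coprimeness, $N$ is elementary abelian of order coprime to $|A|$ and to $\mathrm{char}(k)$. Clifford's theorem together with primitivity forces $V|_N$ to be homogeneous, i.e., $N$ acts via a single $AG$-invariant character $\lambda$ (extending $k$ if necessary). The quotient $G/N$ then acts on $V$ projectively, and passing to a finite central extension reduces the induction to $G/N$, while the contribution of $N$ to the $A$-module structure of $V$ is controlled by the dual action of $A$ on $\hat N$ via Glauberman-style fixed-point arguments available from coprimeness. Iterating along a chief $A$-series of $G$ accumulates $|A|$-bounded factors at each layer and yields a bound of the required form.

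The main obstacle is controlling the accumulation of $\delta(A)$. Each reduction step --- imprimitivity, passage to an inertia subgroup, or quotienting by a chief factor --- multiplies the implicit constant by a factor depending on $|A|$ or on orbit lengths of $A$ on characters of elementary abelian sections, and one has to show that the total accumulation is bounded purely in terms of $A$ rather than $|G|$. This requires bundling together chief $A$-factors into $|A|$-bounded families so that the effective number of reduction steps is itself bounded in $|A|$. A secondary technical point is the genuinely modular case $\mathrm{char}(k)\mid |G|$: the hypothesis only guarantees $\mathrm{char}(k)\nmid|A|$, so on the $G$-side one must use Brauer-style vertex/source arguments in place of ordinary character theory, while the $A$-side retains the semisimple framework in which $m_S$ is well defined.
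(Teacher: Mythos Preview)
The paper does not contain a proof of this statement: it is quoted verbatim as \cite[Theorem~B]{ha-is} and used as a black box in the proof of Theorem~\ref{t-n}. There is therefore nothing in the paper to compare your sketch against.

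For context, your outline is in the right spirit but misses the mechanism that actually makes the Hartley--Isaacs argument terminate. The crucial point you flag as an ``obstacle''---bounding the number of reduction steps in terms of $A$ alone---is resolved in \cite{ha-is} not by ad hoc bundling of chief factors but by invoking a bound on the Fitting height of $G$ in terms of $|A|$ (Dade/Turull-type results for coprime action). Once the Fitting height is $|A|$-bounded, one only needs to analyse the passage across a single Fitting layer $F(G)$, where the homogeneous Clifford component argument and the Hartley--Isaacs ``going down'' lemma give a controlled loss; iterating up the Fitting series then gives $|A|$-boundedly many such losses. Your imprimitivity/Mackey reduction is not how they organise the argument, and your proposed central-extension step for projective actions would reintroduce a dependence on $|G|$ unless handled with more care than the sketch indicates.
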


The following is a key proposition in the proof of
Theorem~\ref{t-n}. Note that here $C_G(H)$ is not assumed to be
 nilpotent, as a stronger assertion is needed for induction on $|H|$ to work.

\begin{proposition}\label{p1}
Let $G$ be a finite group admitting a Frobenius group of
automorphisms $FH$ of coprime order with
kernel $F$ and complement $H$. Suppose that
$V=F(G)=O_p(G)$ is an elementary abelian $p$-group
such that $C_V(F)=1$ and $G/V$ is a $q$-group. Then $F(C_G(H))\leq V$.
\end{proposition}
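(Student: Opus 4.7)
The plan is to let $R := O_q(C_G(H))$ be the largest normal $q$-subgroup of $C_G(H)$ and prove $R = 1$. Using coprime action, fix an $FH$-invariant Sylow $q$-subgroup $Q$ of $G$, so that $G = V \rtimes Q$ and $C_G(H) = C_V(H) \rtimes C_Q(H)$. Since the $p$-part of the nilpotent $F(C_G(H))$ lies automatically inside $V$ (the unique Sylow $p$-subgroup of $G$), the conclusion $F(C_G(H)) \leq V$ is equivalent to $R = 1$.

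Two preliminary observations are immediate. First, solubility of $G$ with $V = F(G)$ gives $C_G(V) \leq V$, so $Q$ acts faithfully on $V$; it therefore suffices to show that $R$ centralizes $V$, for then $R \leq V \cap Q = 1$. Second, since $R \triangleleft C_G(H)$, the commutator $[R, C_V(H)]$ lies in $R \cap V \leq Q \cap V = 1$, so $R$ centralizes $V^H$.

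To exploit the hypothesis $C_V(F) = 1$, I would apply Lemma~\ref{l-free} to each irreducible $\mathbb{F}_p[FH]$-summand of $V$ (using semisimplicity, since $p \nmid |FH|$): each such summand is a free $\mathbb{F}_p H$-module with nonzero $H$-fixed subspace, and by $FH$-irreducibility equals the $FH$-submodule generated by it. Summing over all summands yields the key decomposition $V = \sum_{f \in F} V^{H^f}$. The same argument as in the second observation applied to the Frobenius complement $H^f$ shows that the $F$-conjugate $R^f = O_q(C_G(H^f))$ centralizes $V^{H^f}$ for every $f \in F$.

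The main obstacle is that $R$ itself centralizes only $V^H$, not each $V^{H^f}$, because $R \neq R^f$ in general. I would address this by induction on $|G|$. The subgroup $\tilde R := \langle R^F \rangle$ is $FH$-invariant (the $H$-invariance follows from $R^{fh} = R^{f^h}$, valid since $R$ is $H$-invariant and $F$ is normal in $FH$), so $W := V \tilde R$ is an $FH$-invariant subgroup of $G$ inheriting all hypotheses of the proposition (faithful action of $\tilde R$ on $V$ ensures $V = F(W) = O_p(W)$). If $\tilde R \neq Q$, then $|W| < |G|$, and the inductive hypothesis applied to $W$ yields $F(C_W(H)) \leq V$; since $R$ is a normal $q$-subgroup of $C_W(H) = W \cap C_G(H)$, this forces $R \leq V \cap Q = 1$. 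The remaining case $\tilde R = Q$ is the most delicate: one rules out $R = Q$ directly (otherwise $R = R^f$ would centralize every $V^{H^f}$ and hence $V$, contradicting faithfulness) and shows $\bigcap_{f \in F} R^f = 1$ (this $FH$-invariant intersection centralizes every $V^{H^f}$, hence $V$), but completing this subcase would, I expect, require invoking the Hartley--Isaacs theorem applied to the $H$-action to control multiplicities in the $\mathbb{F}_p H$-decomposition of $V$, combined with a careful Clifford-theoretic analysis of how the $F$-orbit of $R$ sits inside $Q$.
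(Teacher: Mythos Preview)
Your setup through the inductive reduction is sound: the decomposition $V=\sum_{f\in F}C_V(H^f)$ is correct (each irreducible $\mathbb F_p[FH]$-summand is a free $\mathbb F_pH$-module by Lemma~\ref{l-free}, and the $F$-span of its $H$-fixed points is a nonzero $FH$-submodule), and your induction on $|G|$ legitimately reduces to the case $Q=\langle R^F\rangle$. But that reduction is essentially the \emph{beginning} of the argument, not the end. You explicitly leave the case $Q=\langle R^F\rangle$ unfinished, and the two observations you make there ($R\ne Q$ and $\bigcap_f R^f=1$) do not come close to forcing $R=1$. Your guess that Hartley--Isaacs on the $H$-action will finish it is off target: that theorem controls multiplicities in irreducible modules for a larger group, and gives no leverage on the question of whether a particular $H$-fixed $q$-element centralizes $V$.

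The paper's proof shows how much work the residual case actually requires, and it proceeds quite differently. Rather than working with the whole of $O_q(C_G(H))$, one picks a single nontrivial $c\in Q\cap Z(C_G(H))$ and reduces to $Q=\langle c^F\rangle$. One then passes to a splitting field, takes an irreducible $kQFH$-factor $W$ on which $c$ acts nontrivially, and applies Clifford's theorem to decompose $W$ into Wedderburn $kQ$-components. The crucial analysis is of the $H$-orbits on these components: regular $H$-orbits are handled by a free-module argument (your Lemma~\ref{l-free} reappears here), and there is at most one non-regular $H$-orbit, coming from a fixed point of a suitable $H_1\le H$. If $H_1\ne H$ one applies induction on $|H|$ (not on $|G|$); if $H_1=H$ one uses the Three Subgroup Lemma together with the fact that $Z(\overline Q)$ acts by scalars on a homogeneous component to force $\overline Q$ abelian, whence $c$ is scalar on a free $kH$-module and must be trivial. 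None of this Clifford/Wedderburn machinery or the induction on $|H|$ is present in your outline, and without it the proof does not close.
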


\bp
Let $Q$ be an $FH$-invariant Sylow $q$-subgroup of $G$.
Suppose the opposite and choose
a nontrivial element $c\in Q\cap Z(C_G(H)))$.
Note that $c$ centralizes $C_V(H)$ but acts nontrivially on $V$.
Our aim is a contradiction arising from these assumptions.

Consider $\langle c^{HF}\rangle=\langle c^F\rangle$, the minimal
$FH$-inva\-ri\-ant subgroup containing~$c$. We can assume that
\begin{equation}\label{zam-c}
Q=\langle
c^F\rangle.
\end{equation}

We regard $V$ as an
${\Bbb F}_pQFH$-module. At the same time we reserve the right to regard $V$ as a normal subgroup of the semidirect
product $VQFH$. For example, we may use the commutator notation: the subgroup
$[V,Q]=\langle [v,g]\mid v\in V,\;g\in Q\rangle$ coincides with the subspace spanned by
$\{-v+\nobreak vg\mid v\in V,\;g\in Q\}$. We also keep using the
centralizer notation for fixed points, like $C_V(H)=\{v\in V\mid vh=v\text{ for all } h\in H\}$,
and for kernels, like
$C_Q(Y)=\{x\in Q\mid yx=y\text{ for all }y\in Y\}$ for a subset $Y\subseteq V$.

We now extend the ground field to a finite field
$k$ that is a splitting field for $QFH$ and obtain a $kQFH$-module
$\widetilde V=V\otimes _{{\Bbb F}_p}k$.
Many of the above-mentioned properties of $V$ are inherited by ${\widetilde{V}}$:
\vskip1ex
\noindent (V1) \ $\widetilde{V}$ is a faithful $kQ$-module;

\noindent (V2) \ $c$ acts trivially on $C_{\widetilde{V}}(H)$;

\noindent (V3) \ $C_{\widetilde{V}}(F)=0$.

\vskip1ex
Our aim is to show that $c$ centralizes $\widetilde V$, which will contradict (V1).

Consider an unrefinable series of $kQFH$-sub\-modules
\begin{equation}\label{riad2}
\widetilde{V}=V_1 >V_2>\dots >V_n> V_{n+1}=0.
\end{equation}
Let $W$ be one of the factors of this series; it is a nontrivial
irreducible $kQFH$-module. If $c$ acts trivially on
every such $W$, then $c$ acts trivially on
$\widetilde V$, as the order of $c$ is coprime to the
characteristic $p$ of the field $k$ --- this contradicts (V1).
Therefore in what follows we assume that $c$ acts nontrivially on
$W$.

The following properties hold for $W$:

\vskip1ex
\noindent (W1) \ $c$ acts nontrivially on $W$;

\noindent(W2) \ $c$ acts trivially on $C_{W}(H)$;

 \noindent(W3) \ $C_{W}(F)=0$;

\noindent(W4) \ $W$ is a free $kH$-module.
\vskip1ex

Indeed, property (W1) has already been mentioned.
Property (W2) follows from (V2) since $C_{\widetilde{V}}(H)$ covers
$C_{W}(H)$. Property (W3) follows from (V3) since $C_{\widetilde{V}}(F)$ covers
$C_{W}(F)$. Property (W4) follows from (W3) by Lemma~\ref{l-free}.

We shall need the following elementary remark.

\begin{lemma} \label{l-faith}
Let $FH$ be a
Frobenius group with kernel $F$ and complement~$H$. In any
action of $FH$ with nontrivial action of $F$ the complement $H$
acts faithfully.\qed
\end{lemma}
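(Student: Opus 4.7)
The plan is to let $K$ denote the kernel of the given action of $FH$, which is automatically a normal subgroup of $FH$. Showing that $H$ acts faithfully is equivalent to showing $K \cap H = \{1\}$, so I would argue by contradiction: suppose that some $1 \neq h_0 \in K \cap H$ exists, and aim for the stronger conclusion $F \subseteq K$, which would contradict the assumption that $F$ acts nontrivially.

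The key step is to push $h_0 \in K$ around by commuting with $F$. For every $f \in F$, normality of $K$ gives $fh_0f^{-1} \in K$, hence the commutator $fh_0f^{-1}h_0^{-1}$ lies in $K$; and since $F \trianglelefteq FH$, this commutator also lies in $F$. Writing $\alpha$ for the automorphism of $F$ given by $\alpha(f) = h_0fh_0^{-1}$, the commutator rewrites as $f\alpha(f)^{-1}$, so $f\alpha(f)^{-1} \in K \cap F$ for every $f \in F$. Because $h_0$ is a nontrivial element of the Frobenius complement $H$, the defining property of a Frobenius group gives $C_F(h_0) = 1$, which means $\alpha$ acts fixed-point-freely on $F$.

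Finally, I would invoke the classical ``Frobenius equation'' observation: the map $\beta \colon F \to F$, $\beta(f) = f\alpha(f)^{-1}$, is injective, because $\beta(f) = \beta(g)$ rearranges to $g^{-1}f = \alpha(g^{-1}f)$, forcing $g^{-1}f$ to be a fixed point of $\alpha$ and hence trivial. Since $F$ is finite, injectivity forces surjectivity, and so $F = \beta(F) \subseteq K \cap F$, contradicting the hypothesis that $F$ acts nontrivially. I do not foresee any real obstacle: once one recalls that every nontrivial complement element acts fixed-point-freely on the kernel, and that a fixed-point-free endomorphism of a finite group gives a bijective $\beta$, the argument is entirely formal.
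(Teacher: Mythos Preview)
Your argument is correct. The paper itself gives no proof at all --- the lemma is stated with an immediate \verb|\qed|, treating it as a standard fact about Frobenius groups. The shortest way to phrase what you did is to invoke the well-known structure of normal subgroups of a Frobenius group: every normal subgroup of $FH$ either contains $F$ or is contained in $F$. Applied to the kernel $K$ of the action, if $K\cap H\neq 1$ then $K\nleq F$, hence $F\leq K$, contradicting the nontriviality of the action of $F$. Your commutator/bijectivity computation is exactly the elementary verification of this structural fact in the case at hand, so your route is the same in spirit, just spelled out in full detail rather than quoted.
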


 The following lemma will be used repeatedly in the
proof.

\begin{lemma} \label{c-triv-free}
Suppose that $M=\bigoplus _{h\in H}M_h$ is a free $kH$-sub\-module of $W$, that is, the subspaces $M_h$
form a regular $H$-orbit: $M_{h_1}h_2=M_{h_1h_2}$ for $h_1,h_2\in H$. If the element $c$ leaves invariant each
of the $M_h$, then $c$ acts trivially on $M$.
\end{lemma}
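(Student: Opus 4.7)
The plan is to exploit the classical averaging trick for free $kH$-modules. First I would fix an arbitrary $m \in M_1$ (the summand corresponding to $1 \in H$) and form the averaged element $s = \sum_{h \in H} mh$. Because $\{M_h\}_{h \in H}$ is a regular $H$-orbit under the right action, for every $h' \in H$ we have $sh' = \sum_{h \in H} m(hh') = s$, so $s$ lies in $C_M(H) \subseteq C_W(H)$. Property (W2) then forces $sc = s$.

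Next I would use that $c \in C_G(H)$, so $c$ commutes element-wise with $H$ in the semidirect product, and hence the operators $c$ and $h$ commute on $W$ for every $h \in H$. Combined with the hypothesis that $c$ preserves $M_1$, this gives
\[
sc = \sum_{h \in H} (mh)c = \sum_{h \in H} (mc)h,
\]
with $mc \in M_1$ and $(mc)h \in M_h$. Comparing this with $s = \sum_h mh$ in the direct-sum decomposition $M = \bigoplus_h M_h$ and equating summands in each $M_h$, I obtain $(mc)h = mh$ for every $h$; in particular $mc = m$. Since $m \in M_1$ was arbitrary, $c$ acts as the identity on $M_1$. For a general element $mh$ of $M_h$ with $m \in M_1$, the same commutation gives $(mh)c = (mc)h = mh$, so $c$ acts trivially on each $M_h$, and therefore on all of $M$.

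There is no real obstacle here: the argument is a clean application of three ingredients already in hand, namely the regularity of the $H$-orbit on the $M_h$, the commutation $ch = hc$ coming from $c \in C_G(H)$, and the fact that $c$ fixes $C_W(H)$ pointwise. The only point that requires a little care is keeping separate the two distinct hypotheses about $c$ and $H$: the $c$-invariance of each summand $M_h$ is used to ensure $mc$ lies back in $M_1$, while the commutation of $c$ with $H$ on $W$ is used to move $c$ past the action of $h$; one cannot substitute either for the other.
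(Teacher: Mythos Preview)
Your proof is correct and follows the same averaging idea as the paper: form the diagonal element $s=\sum_h mh$, observe $s\in C_W(H)$, and use (W2) to get $sc=s$, then compare components in the direct sum $\bigoplus_h M_h$. The paper's version is marginally more economical: since $c$ leaves each $M_h$ invariant, one has directly $sc=\sum_h (mh)c$ with $(mh)c\in M_h$, so the equality $sc=s$ forces $(mh)c=mh$ for every $h$ in one stroke, without invoking the commutation $ch=hc$ at all. Your remark that ``one cannot substitute either for the other'' is therefore not quite right: the $c$-invariance of the summands alone suffices, and the commutation of $c$ with $H$ is not needed for this lemma.
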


\begin{proof}
The fixed points of $H$ in $M$ are the diagonal elements
$\sum _{h\in H}mh$ for any $m$ in $M_1$, where $mh\in M_h$. Since
$c$ acts trivially on every such sum by property~(W2) and leaves invariant
every direct summand $M_h$, it must act trivially on each $mh$.
Clearly, the elements $mh$ run over all elements in all the summands $M_h=M_1h$, $h\in H$.
\end{proof}

We now apply Clifford's theorem and consider the decomposition
$
W=W_1\oplus \dots \oplus W_t
$
of $W$ into the direct sum of the
Wedderburn components $W_i$ with respect to~$Q$. We consider the
transitive action of $FH$ on the set $\Omega=\{W_1,\dots ,W_t\}$.

\begin{lemma} \label{c-triv-reg}
The element $c$ acts trivially on the sum of components in any regular $H$-orbit in $\Omega$.
\end{lemma}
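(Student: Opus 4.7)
The plan is to set up the sum of components in a regular $H$-orbit as a free $kH$-submodule of $W$ in exactly the form required by Lemma~\ref{c-triv-free}, and then invoke that lemma. Let $\mathcal{O}\subseteq \Omega$ be a regular $H$-orbit, pick any $W_0\in\mathcal{O}$, and for each $h\in H$ set $M_h:=W_0\,h$. Since the orbit is regular, the map $h\mapsto M_h$ is a bijection from $H$ onto $\mathcal{O}$, and by construction $M_{h_1}h_2=W_0h_1h_2=M_{h_1h_2}$. Distinct Wedderburn components of $W$ with respect to $Q$ are linearly independent in $W$, so the sum is actually direct:
\[
M \;:=\; \sum_{h\in H} M_h \;=\; \bigoplus_{h\in H} M_h .
\]
This is precisely the structure of a free $kH$-submodule featuring in the hypothesis of Lemma~\ref{c-triv-free}.

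It remains to verify that $c$ leaves each direct summand $M_h$ invariant. This is immediate: $c\in Q$, and every Wedderburn component of $W$ with respect to $Q$ is by definition a $Q$-invariant subspace. Consequently $c$ stabilizes each $M_h$, and Lemma~\ref{c-triv-free} applies and yields that $c$ acts trivially on $M$, which is the sum of components in the regular orbit $\mathcal{O}$.

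There is no real obstacle here; the substantive combinatorics has already been packaged into Lemma~\ref{c-triv-free}. That lemma uses property~(W2), namely that $c$ annihilates $C_W(H)$, to pass from triviality of $c$ on the `diagonal' fixed points $\sum_{h\in H} m\,h$ (with $m\in M_1$) to triviality of $c$ on each summand $M_h$ individually, exploiting the fact that each $mh$ lies in a distinct summand preserved by $c$. The only thing one has to check in the present lemma is that a regular $H$-orbit of Wedderburn components delivers exactly the input data of that lemma, which is what the paragraphs above do.
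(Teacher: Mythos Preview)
Your proof is correct and follows exactly the paper's approach: the paper simply states that the sum of components in a regular $H$-orbit is ``obviously'' a free $kH$-submodule and then invokes Lemma~\ref{c-triv-free}, while you have spelled out the obvious details (the indexing $M_h=W_0h$, directness of the sum, and $Q$-invariance of each Wedderburn component).
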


\begin{proof}
This follows from Lemma~\ref{c-triv-free}, because the sum of components in a regular $H$-orbit in $\Omega$ is
obviously a free $kH$-sub\-module.
\end{proof}

Note that $H$ transitively permutes the $F$-orbits in $\Omega$.
Let $\Omega _1=W_1^F$ be one of these $F$-orbits and let $H_1$ be
the stabilizer of $\Omega _1$ in $H$ in the action of $H$ on
$F$-orbits. If $H_1=1$, then all the $H$-orbits in $\Omega$ are
regular, and then $c$ acts trivially on $W$ by
Lemma~\ref{c-triv-reg}. This contradicts our assumption (W1) that
$c$ acts nontrivially on $W$.
Thus, we assume that $H_1\ne 1$.

\begin{lemma} \label{orb-h1}
The subgroup $H_1$ has exactly one
non-regular orbit in~$\Omega _1$ and this orbit is a fixed point.
\end{lemma}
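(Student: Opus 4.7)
The plan is to exploit the cyclicity of $F$ to reduce the lemma to a Frobenius action of $H_1$ on the coset space $F/F_1$, where $F_1=\mathrm{Stab}_F(W_1)$. Since $F$ is cyclic, $F_1$ is the unique subgroup of its order in $F$, hence characteristic in $F$ and normal in $FH$. If $F_1=F$ then $\Omega_1=\{W_1\}$ is itself a fixed point of $H_1$ and there is nothing to prove, so I assume $F_1\ne F$.

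The first step is to show that $FH/F_1$ is still a Frobenius group, with kernel $F/F_1$ and complement $H$. Any nontrivial $h\in H$ acts on the cyclic group $F$ as multiplication by some unit $u\in(\Z/|F|\Z)^{*}$, and the fixed-point-free condition for $FH$ forces $u-1$ also to be a unit. This property passes verbatim to $F/F_1$, so $H$ acts fixed-point-freely on $(F/F_1)\setminus\{1\}$; in particular $(F/F_1)H_1\le FH/F_1$ is itself a Frobenius group.

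Now $\Omega_1$ is a transitive $(F/F_1)H_1$-set in which $F/F_1$ acts regularly, because the $F$-stabilizer of every point of $\Omega_1$ is conjugate to $F_1$ and hence, $F$ being abelian, equals $F_1$. So the stabilizer in $(F/F_1)H_1$ of any $W\in\Omega_1$ is a complement to $F/F_1$; in a Frobenius group all such complements are conjugate, so for some $W_0\in\Omega_1$ this stabilizer is $H_1$ itself, giving a fixed point of $H_1$. For any other $W\in\Omega_1$ the stabilizer in $(F/F_1)H_1$ is a complement distinct from $H_1$, and distinct complements in a Frobenius group intersect trivially; hence $\mathrm{Stab}_{H_1}(W)=1$ and the $H_1$-orbit of $W$ is regular. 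This yields the required conclusion: $\{W_0\}$ is the only non-regular orbit of $H_1$ on $\Omega_1$, and it is a fixed point.

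The delicate step is passing the fixed-point-free property from $F$ to $F/F_1$; this relies essentially on the cyclicity of $F$ and is consistent with the remarks in the introduction that the cyclicity hypothesis cannot be dropped. Once the Frobenius quotient $(F/F_1)H_1$ is in hand, the rest is an elementary orbit-and-stabilizer computation using only the two standard facts that complements in a Frobenius group are all conjugate and that distinct complements meet trivially.
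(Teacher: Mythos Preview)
Your argument is correct and, structurally, it is the same as the paper's: reduce to a Frobenius group $\overline{F}H_1$ acting on $\Omega_1$, locate a fixed point of $H_1$ via conjugacy of complements, and conclude that every other $H_1$-orbit is regular because distinct Frobenius complements intersect trivially (the paper phrases this last step as ``no nontrivial element of a complement fixes a nontrivial coset of a subgroup of the kernel'', which is the same fact).

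The one substantive discrepancy is your reliance on cyclicity of $F$. In the paper this lemma sits inside Proposition~2.3, which is formulated for an \emph{arbitrary} Frobenius kernel $F$; Theorem~2.1 explicitly emphasises that here $FH$ need not be metacyclic. Your proof, as written, does not cover that generality: you use cyclicity both to make $F_1=\mathrm{Stab}_F(W_1)$ characteristic (hence normal) and, via the ``$u-1$ is a unit'' trick, to pass the fixed-point-free property to $F/F_1$. Neither step needs cyclicity. The paper works with $\overline{F}$, the image of $F$ in its action on $\Omega_1$; the kernel of that action is automatically $H_1$-invariant, and since $|F|$ and $|H|$ are coprime in \emph{any} Frobenius group one has $C_{\overline{F}}(h)=C_F(h)\cdot\mathrm{Ker}/\mathrm{Ker}=1$ for every $1\ne h\in H_1$. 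So $\overline{F}H_1$ is Frobenius without any hypothesis on $F$ beyond being a Frobenius kernel. Your remark that ``this relies essentially on the cyclicity of $F$'' therefore misplaces the role of that hypothesis: cyclicity matters in the Lie-ring part of the paper (bounding nilpotency class), not here. For general $F$ one also loses the regularity of the $\overline{F}$-action on $\Omega_1$, which is why the paper argues via the stabilizer $S$ and Schur--Zassenhaus rather than directly identifying stabilizers with complements; but once $\overline{F}H_1$ is known to be Frobenius, the remainder of your orbit computation goes through unchanged after this adjustment.
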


\begin{proof}
Let $\overline F$ be the image of $F$
in its action on $\Omega _1$. If $\overline F=1$, then $\Omega
_1=\{W_1\}$ consists of a single Wedderburn component, and the lemma holds.

Thus, we can assume that $\overline F\ne 1$, and
$\overline{F}H_1$ is a Frobenius group with complement~$H_1$. By
Lemma~\ref{l-faith} the subgroup $H_1$ acts faithfully on $\Omega_1$ and we use
the same symbol for it in regard of its action on $\Omega_1$.

Let $S$ be the stabilizer of the point $W_1 \in \Omega _1$ in
$\overline{F}H_1$. Since $|\Omega _1|=|\overline F:\overline F\cap
S|=|\overline F H_1:S|$ and the orders $|\overline F|$ and $|H_1|$
are coprime, $S$ contains a conjugate of $H_1$; without loss of
generality (changing $W_1$ and therefore $S$ if necessary) we
assume that $H_1\leq S$. We already have a fixed point $W_1$ for
$H_1$. It follows that $H_1$ acts on $\Omega _1$ in the same way
as $H_1$ acts by conjugation on the cosets of the stabilizer of $W_1$ in $\overline F$. But in a Frobenius group
no non-trivial element of a complement can fix a non-trivial coset of a subgroup of the
kernel. Otherwise there would exist such an element of prime order and, since this element is fixed-point-free
on the kernel, its order would divide the order of that coset and therefore the order of the kernel, a contradiction.
\end{proof}

We now consider the $H$-orbits in $\Omega$. Clearly, the $H$-orbits of elements of regular $H_1$-orbits
in $\Omega_1$ are regular $H$-orbits. Thus, by Lemma~\ref{orb-h1} there is exactly one non-regular
$H$-orbit in $\Omega$ --- the $H$-orbit of the fixed point $W_1$ of $H_1$ in $\Omega
_1$. Therefore by Lemma~\ref{c-triv-reg} we obtain the following.

\begin{lemma} \label{c-triv-out}
The element $c$ acts trivially on all the Wedderburn components
$W_i$ that are not contained in the $H$-orbit of~$W_1$.\qed
\end{lemma}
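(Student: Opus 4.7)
The plan is to translate the $H_1$-orbit structure on $\Omega _1$ described in Lemma~\ref{orb-h1} into information about $H$-orbits on the whole of $\Omega$, and then to apply Lemma~\ref{c-triv-reg} to each regular $H$-orbit. First, since $H$ transitively permutes the $F$-orbits on $\Omega$, every $H$-orbit in $\Omega$ meets $\Omega _1$; more precisely, for $\omega \in \Omega _1$ one has $\omega ^H\cap \Omega _1=\omega ^{H_1}$, because any $h\in H$ sending $\omega$ into $\Omega _1$ must satisfy $\Omega _1h=\Omega _1$ and hence lie in $H_1$. This yields a bijection between the $H$-orbits on $\Omega$ and the $H_1$-orbits on $\Omega _1$, each $H$-orbit being the $H$-sweep of a single $H_1$-orbit on $\Omega _1$.

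The next step is to observe that this bijection preserves regularity. The same argument shows that the $H$-stabilizer of any $\omega \in \Omega _1$ is contained in $H_1$, so it coincides with the $H_1$-stabilizer of $\omega$. Consequently an $H_1$-orbit on $\Omega _1$ has size $|H_1|$ (i.e.\ is regular) if and only if the corresponding $H$-orbit on $\Omega$ has size $|H|$. Combining this with Lemma~\ref{orb-h1}, which asserts that $\{W_1\}$ is the only non-regular $H_1$-orbit in $\Omega _1$, we conclude that the $H$-orbit of $W_1$ is the only non-regular $H$-orbit in $\Omega$.

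Finally, I would apply Lemma~\ref{c-triv-reg} to each of the remaining (regular) $H$-orbits: $c$ acts trivially on the direct sum of the Wedderburn components in any such orbit. Summing over all regular $H$-orbits shows that $c$ acts trivially on every $W_i$ lying outside the $H$-orbit of~$W_1$, which is the claim of Lemma~\ref{c-triv-out}. The only point requiring care is the stabilizer bookkeeping in the second paragraph, but this is a routine consequence of the fact that the partition of $\Omega$ into $F$-orbits is $H$-invariant, so once this is in hand the lemma follows immediately from Lemmas~\ref{orb-h1} and~\ref{c-triv-reg}.
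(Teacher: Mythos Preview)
Your argument is correct and follows the same approach as the paper: you use the correspondence between $H$-orbits on $\Omega$ and $H_1$-orbits on $\Omega_1$ together with Lemma~\ref{orb-h1} to conclude that the $H$-orbit of $W_1$ is the only non-regular $H$-orbit, and then invoke Lemma~\ref{c-triv-reg}. The paper compresses your first two paragraphs into a single ``Clearly'' and states the lemma with an immediate \qed, but the underlying reasoning is identical.
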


Therefore, since $c$ acts nontrivially on $W$,
it must be nontrivial on the sum over the $H$-orbit of $W_1$.
Moreover, since $c$
commutes with $H$, the element $c$ acts in the same way --- and therefore nontrivially --- on all the components in
the $H$-orbit of $W_1$.
In particular, $c$ is nontrivial on $W_1$.

We employ induction on $|H|$.
In
the basis of this induction, $|H|$ is a prime, and either $H_1=1$,
which gives a contradiction as described above, or $H_1=H$, which
is the case dealt with below.

First suppose that
$H_1\ne H$. Then we consider $U=\bigoplus _{f\in F}W_1f$, the sum of components in $\Omega _1$,
which is a $kQFH_1$-module.

\begin{lemma} \label{c-triv-cu}
The element $c$ acts trivially on $C_U(H_1)$.
\end{lemma}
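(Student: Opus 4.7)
The plan is to embed $C_U(H_1)$ diagonally into $C_W(H)$ along a transversal of $H_1$ in $H$, and then use property (W2) to transport the triviality of $c$ back down to $U$.

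First I would unpack the orbit structure. By Lemma~\ref{orb-h1} and the paragraph preceding it, the $H$-orbit of $W_1$ in $\Omega$ is the disjoint union $\bigsqcup_{h\in T}\Omega_1 h$, where $T$ is a transversal for the right cosets of $H_1$ in $H$. Correspondingly, the submodule
\[
X:=\bigoplus_{h\in T}Uh
\]
is an internal direct sum of the $H_1$-invariant subspaces $Uh$, which are permuted by $H$ via the natural action on the cosets $H_1\backslash H$. Since $c\in Q$ and the $W_i$ are the $kQ$-isotypic components of $W$, the element $c$ fixes each $W_i$ setwise, so it preserves every $Uh$; in particular $c$ acts on $U$ and on $X$.

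Next, for $u\in C_U(H_1)$ I would form the diagonal element $v:=\sum_{h\in T}uh\in X$. A direct check, using the $H_1$-invariance of $u$ together with the fact that right multiplication by any $h'\in H$ induces (modulo $H_1$) a bijection of $T$, shows that $v\in C_W(H)$. This is precisely the standard identification of $C_X(H)$ with the $H$-fixed points of the module induced from the $H_1$-module $U$ up to $H$. Property (W2) then yields $cv=v$. On the other hand $c\in Z(C_G(H))$ commutes with $H$, so $c$ and $H$ commute as operators on $W$, giving $cv=\sum_{h\in T}(cu)h$. Equating the two expressions for $cv$ and invoking the directness of $\bigoplus_{h\in T}Uh$ produces $(cu-u)h=0$ for each $h\in T$, whence $cu=u$.

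There is no substantial obstacle in this argument: the whole content is recognizing $X$ as the induced module from $U$ and exploiting the centrality of $c$ over $H$ to carry (W2) across the canonical isomorphism $C_U(H_1)\cong C_X(H)$.
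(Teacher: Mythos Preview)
Your argument is correct and uses the same core mechanism as the paper: for $u\in C_U(H_1)$ you form the diagonal element $\sum_{t\in T}ut$ over a transversal $T$ of $H_1$ in $H$, land in $C_W(H)$, apply (W2), and read off $cu=u$ from the directness of $\bigoplus_{t\in T}Ut$. The paper's proof differs only in organization: it first splits $C_U(H_1)$ according to the $H_1$-orbits on $\Omega_1$, disposing of the regular-orbit pieces via Lemma~\ref{c-triv-reg} and then running the transversal argument only on $C_{W_1}(H_1)$; your version applies the transversal argument uniformly to all of $C_U(H_1)$, which is slightly cleaner and makes the appeal to Lemma~\ref{orb-h1} unnecessary.
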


\begin{proof}
Indeed, $c$ acts trivially on the sum over any
regular $H_1$-orbit, because such an $H_1$-orbit is a part of a regular $H$-orbit,
on the sum over which $c$ acts trivially by Lemma~\ref{c-triv-reg}.
By Lemma~\ref{orb-h1} it remains to show that $c$ is trivial on $C_{W_1}(H_1)$.

For
$x\in C_{W_1}(H_1)$ and some right transversal $\{t_i\mid 1\leq i\leq
|H:H_1|\}$ of $H_1$ in $H$ we have $\sum _{i} xt_i\in C(H)$. Indeed, for
any $h\in H$ we have $\sum _{i} xt_ih=\sum _{i} xh_{1i}t_{j(i)}$
for some $h_{1i}\in H_1$ and some permutation of the same transversal $\{t_{j(i)}\mid 1\leq i\leq
|H:H_1|\}$, and the latter sum is equal to $\sum _{i} xt_{j(i)}=\sum _{i} xt_{i}$ as $xh_{1i}=x$ for all $i$.
Since $c$ acts trivially on $\sum _{i} xt_{i}\in C(H)$ by property~(W2), it must also act trivially
on each summand, as they are in different $c$-inva\-ri\-ant components; in particular, $xc=x$.
\end{proof}

In order to use induction on $|H|$, we consider the additive group
$U$ on which the group $ Q F H_1$ acts as a group of
automorphisms. The action of $Q$ and $F$ may not be faithful, but
the action of $H_1$ is faithful by Lemma~\ref{l-faith}, because
$FH_1$ is a Frobenius group and the action of $F$ is nontrivial
since $C_U(F)=0$ by property~(W3). Switching to multiplicative
notation also for the additive group of $U$, we now have the
semidirect product $G_1=UQ$ admitting the Frobenius group of
automorphisms $(F/C_F(G_1))H_1$ such that $C_{U}(F/C_F(G_1))=1$.
Consider $\bar G_1=G_1/O_q(G_1)$, keeping the same notation for
$U$, $F$, $H_1$. Note that $\bar c\ne 1$, that is, $c\not\in O_q(G_1)$, as $c$ is
nontrivial on $U$. Then the hypotheses of the proposition hold for
$\bar G_1$ and $(F/C_F(G_1))H_1$. We claim that $\bar c\in F(C_{\bar G_1}(H_1))$; indeed,
$C_{\bar G_1}(H_1)=C_{U}(H_1)C_{Q}(H_1)$ is a
$\{p,q\}$-group, in which $C_{U}(H_1)$ is a normal $p$-sub\-group
centralized by the $q$-ele\-ment $\bar c$ by
Lemma~\ref{c-triv-cu}. If $H_1\ne H$, then by
induction on $|H|$ we must have $\bar c\in U$, a contradiction.

Thus, it remains to consider the case where $H_1=H$, that is,
$W_1$ is $H$-inva\-ri\-ant, which is assumed in what follows.

We now focus on the action on $W_1$, using bars to denote the
images of $Q$ and its elements in their action on~$W_1$. We can regard $H$
as acting by automorphisms on $\overline Q$.
Let $\zeta _2(\overline Q)$ be the second centre of $\overline Q$. We obviously have
$
[[H,\, \overline c],\, \zeta _2(\overline Q)]=[1,\, \zeta _2(\overline Q)]=1.
$
We also have
$
[[\overline c,\, \zeta _2(\overline Q)],\, H]\leq [Z(\overline Q),\, H]=1,
$
since $Z(\overline Q)$ is represented on the homogeneous $k\overline Q$-module $W_1$ by scalar linear transformations.
Therefore by the Three Subgroup Lemma,
\begin{equation}\label{z2h}
[[\zeta _2(\overline Q),\, H],\, \overline c]=1.
\end{equation}

By the choice of $c\in Z(C_G(H))$ we also have
\begin{equation}\label{cz2}
[C_{\zeta _2(\overline Q)}(H),\, \overline c]=1,
\end{equation}
because $C_Q(H)$ covers $C_{\overline Q}(H)$. Since
the action of $H$ on $Q$ is coprime by hypothesis, we have
 $
 \zeta _2(\overline Q)=[\zeta _2(\overline Q), H]C_{\zeta _2(\overline Q)}(H).
 $
 Therefore equalities
 \eqref{z2h} and \eqref{cz2} together imply the equality
\begin{equation}\label{z2}
[\overline c ,\,\zeta _2(\overline Q)]=1.
\end{equation}

Now let $F_1$ denote the stabilizer of $W_1$ in $F$, so that
the stabilizer of $W_1$ in $FH$ is equal to $F_1H$. Then for any
element $f\in F\setminus F_1$ the component $W_1f$ is outside the
$H$-orbit of $W_1$, which is equal to $\{ W_1\}$ in the case under consideration.
As mentioned above, $c$ acts trivially on all the Wedderburn components outside
the $H$-orbit of $W_1$. Thus, $c$ acts trivially on $W_1f$, which is equivalent
to $c^{f^{-1}}$ acting trivially on $W_1$. In other words, $\overline{c^{x}}=1$ in the action on $W_1$ for any
$x\in F\setminus F_1$. (Note that it does not matter that $W_1$ is
not $F$-invariant: for any $g\in F$ the element $c^g$ belongs to $Q$, which acts on $W_1$.)
Since $Q=\langle c^F\rangle$ by \eqref{zam-c},
we obtain that $\overline
Q=\langle \overline c^{F_1}\rangle$. In view of the $F_1$-invariance of the section $\overline Q$
we can apply conjugation by any $g\in F_1$ to equation \eqref{z2} to obtain that
$$
[\overline c^{g},\, \zeta
_2(\overline Q)^{g}]=[\overline c^{g},\, \zeta _2(\overline Q)]=1.
$$
As a result,
$$
[\overline Q,\, \zeta _2(\overline Q)]=[\langle \overline c^{F_1}\rangle,\, \zeta
_2(\overline Q)]=1.
$$
This means that $\overline Q$ is abelian.

In the case of $\overline Q$ abelian we arrive at a contradiction similarly to how this was done
in \cite[Theorem~2.7(c)]{khu-ma-shu}.
The sum of the $W_i$ over all regular $H$-orbits is obviously a
free $kH$-module. Since the whole $W$ is also
a free $kH$-module by property~(W4), the component $W_1$ must also be a free $kH$-module, as
a complement of the sum over all regular $H$-orbits. Since $\overline Q$ is abelian, $c$ acts on $W_1$ by
a scalar linear transformation. By Lemma~\ref{c-triv-free} (or simply because
$c$ has fixed points in $W_1$, as $C_{W_1}(H)\ne 0$) it follows that, in fact, $c$ must act
trivially on $W_1$, a contradiction with property~(W1). Proposition~\ref{p1} is proved.
\ep

\begin{proposition}\label{p2}
Suppose that a soluble finite group $G$ admits a Frobenius group of
automorphisms $FH$ of coprime order with kernel $F$ and complement $H$ such that
$V=F(G)=O_p(G)$ is an elementary abelian $p$-group and
$C_G(H)$ is nilpotent. If $C_V(F)=1$, then $G=VC_G(F)$.
\end{proposition}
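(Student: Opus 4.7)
The plan is to prove that $[G,F]\leq V$, from which the coprime Fitting decomposition $G=[G,F]\cdot C_G(F)$ yields $G=VC_G(F)$ at once. Equivalently, I aim to show that $F$ acts trivially on $\bar G:=G/V$, which I would split into a $p'$-part and a $p$-part.

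For the $p'$-part, I fix a prime $r\neq p$ dividing $|G|$ and let $R$ be an $FH$-invariant Sylow $r$-subgroup of $G$. First I would verify that Proposition~\ref{p1} applies to $VR$: the subgroup $V$ is normal in $VR$ as its Sylow $p$-subgroup, while $O_r(VR)\cap V=1$ together with $V\triangleleft VR$ forces $[V,O_r(VR)]=1$, so $O_r(VR)\leq C_G(V)=V$ (using $V=F(G)$ and solvability), whence $O_r(VR)=1$ and $F(VR)=V$. Proposition~\ref{p1} then gives $F(C_{VR}(H))\leq V$; since $C_G(H)$ is nilpotent, so is $C_{VR}(H)$, so $C_{VR}(H)\leq V$ and $C_R(H)=1$. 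Passing to the Frattini quotient $\widetilde R:=R/\Phi(R)$, coprime action gives $C_{\widetilde R}(H)=0$; by Maschke $\widetilde R=C_{\widetilde R}(F)\oplus[\widetilde R,F]$, and on the second summand $C(F)=0$, so Lemma~\ref{l-free} (after extending scalars to a splitting field) forces it to be a free $kH$-module, which is impossible unless it vanishes since $C_{\widetilde R}(H)=0$. Hence $F$ acts trivially on $\widetilde R$ and then, by coprime stability of the Frattini series, on $R$. Thus $R\leq C_G(F)$, and any $FH$-invariant Hall $p'$-subgroup $K$ of $G$ lies in $C_G(F)$.

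The main obstacle is the $p$-part, since Proposition~\ref{p1} is vacuous for the Sylow $p$-subgroup of $G$ itself (the ambient group would be a $p$-group whose Fitting subgroup is itself). The decisive observation is that $V=O_p(G)$ forces $O_p(\bar G)=1$, so $F(\bar G)=O_{p'}(\bar G)$ is a $p'$-group contained in $\bar K:=KV/V\leq C_{\bar G}(F)$; thus $F$ is trivial on $F(\bar G)$. Let $\bar P$ be the image in $\bar G$ of an $FH$-invariant Sylow $p$-subgroup $P\geq V$ of $G$; it is an $FH$-invariant Sylow $p$-subgroup of $\bar G$. For any $\bar p\in\bar P$, $f\in F$, and $\bar n\in F(\bar G)$, both $\bar n$ and the conjugate $\bar p\bar n\bar p^{-1}\in F(\bar G)$ are $F$-fixed, so
\[
\bar p\bar n\bar p^{-1}=(\bar p\bar n\bar p^{-1})^f=\bar p^f\bar n(\bar p^f)^{-1},
\]
which shows that $[\bar p,f]=\bar p^{-1}\bar p^f$ centralizes $F(\bar G)$ and hence lies in $C_{\bar G}(F(\bar G))=F(\bar G)$ by solvability. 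But $\bar P$ is $F$-invariant, so $[\bar p,f]\in\bar P\cap F(\bar G)=1$, giving $F$ trivial on $\bar P$. Combined with $F$ trivial on $\bar K$ and the Hall decomposition $\bar G=\bar P\bar K$, this yields $F$ trivial on $\bar G$, completing the proof.
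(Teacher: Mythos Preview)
Your proof is correct and uses the same three ingredients as the paper --- Proposition~\ref{p1}, Lemma~\ref{l-free}, and the self-centralizing property $C_{\bar G}(F(\bar G))\leq F(\bar G)$ --- but the organization is different and somewhat longer. The paper reduces to $F(\bar G)$ first: if $F$ acts trivially on $F(\bar G)$, then $[\bar G,F]$ centralizes $F(\bar G)$, hence $[\bar G,F]\leq F(\bar G)$, and coprime action gives $[\bar G,F]=1$; so one only has to rule out $F$ acting non-trivially on some Sylow $q$-subgroup $Q$ of $F(\bar G)$ (necessarily $q\neq p$ since $O_p(\bar G)=1$). In that case Lemma~\ref{l-free} applied to $[Q/\Phi(Q),F]$ forces $C_Q(H)\neq 1$, contradicting Proposition~\ref{p1} for the preimage of $Q$. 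Your route instead first proves the stronger statement that $F$ centralizes every $FH$-invariant Sylow $r$-subgroup for $r\neq p$ (hence the whole Hall $p'$-subgroup $K$), and only afterwards invokes the self-centralizing property to handle the Sylow $p$-subgroup of $\bar G$. This buys you the extra information $K\leq C_G(F)$, but at the cost of a separate $p'$/$p$ split that the paper's argument avoids by working inside $F(\bar G)$ from the start.
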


\bp
The quotient $\bar G =G/V$ acts faithfully on $V$. We claim that $F$ acts trivially on
$\bar G$. Suppose not; then $F$ acts non-trivially on the Fitting subgroup
$F(\bar G)$. Indeed, otherwise $[\bar G ,F]$ acts trivially on $F(\bar G)$, which
contains its centralizer, so then $[\bar G ,F]\leq F(\bar
G)$ and $F$ acts trivially on $\bar G$ since the action is coprime.
Thus, $F$ acts non-trivially on some Sylow $q$-subgroup $Q$ of $F(\bar G)$. But then
there is a nontrivial fixed point of $H$ in $Q$ by Lemma~\ref{l-free}. This would contradict
 Proposition~\ref{p1}, since here $C_G(H)$ is nilpotent. \ep

\begin{proof}[Proof of Theorem \ref{t-n}]
Recall that $G$ is a soluble finite group  admitting %!!
a Frobenius group of automorphisms $FH$ of coprime order with kernel $F$ and complement $H$ such that
$|C_G(F)|=m$ and $C_G(H)$ is nilpotent. We claim that $|G/F(G)|$ is $(m,|F|)$-bounded.

We can assume that $G=[G,F]$. For every
$p\nmid |C_G(F)|$ we have $G=O_{p',p}(G)$ by Proposition~\ref{p2} applied to the quotient
of $G$ by the pre-image of the Frattini subgroup of $O_{p',p}(G)/O_{p'}(G)$ with $V$ equal to the
Frattini quotient of $O_{p',p}(G)/O_{p'}(G)$. It remains to prove that
$|G/O_{p',p}(G)|$ is $(m,|F|)$-bounded for every $p$, since then
the index of $F(G)=\bigcap O_{p',p}(G)$ will be at most the
product of these bounded indices over the bounded set of primes
$p$ dividing $ |C_G(F)|$.

The quotient $\bar G =G/O_{p',p}(G)$ acts faithfully on the Frattini quotient $X$
of $O_{p',p}(G)/O_{p'}(G)$. It is sufficient
to bound the order of the Fitting subgroup $F(\bar G)$. Thus, we can assume that $\bar
G=F(\bar G)$. Therefore $\bar
G$ is a $p'$-group, so that the order of $\bar
G FH$ is coprime to $p$, the characteristic of the ground field of $X$ regarded as a vector space over ${\Bbb F}_p$.

Let $X=Y_1\oplus \cdots \oplus Y_s\oplus Z_1\oplus\dots\oplus Z_t$
be the decomposition of $X$ into the direct sum of irreducible
${\Bbb F}_p\bar G F$-submodules, where $C_{Y_i}(F)\ne 0$ for all $i$
and $C_{Z_j}(F)= 0$ for all $j$. (Here either of $s$ or $t$ can be
zero.) By the Hartley--Isaacs theorem \cite[Theorem~B]{ha-is} there is an $|F|$-bounded number $\al (F)$ such that
$\dim Y_i\leq \al (F)\dim
C_{Y_i}(F)$ for every $i$; here $\dim
C_{Y_i}(F)$ is the multiplicity of the trivial ${\Bbb F}_pF$-submodule, which does appear in $Y_i$ by definition.
Therefore the order of $Y_1\oplus \cdots \oplus
Y_s$ is $(m,|F|)$-bounded.

Let $Y= Y_1\oplus \cdots \oplus Y_s$. We claim that $Y$ is
$H$-invariant. Indeed, $C_{Y_i}(F)\ne 0$ for every $i$. Hence,
$C_{Y_ih}(F)=C_{Y_ih}(F^h)=C_{Y_i}(F)h\ne 0$ for every $h\in H$.
Since $Y_ih$ is also an irreducible ${\Bbb F}_p\bar G
F$-submodule, we must have either $Y_ih\subseteq Y$ or $Y_ih\cap
Y=0$. But the second possibility is impossible, since $C_{Y_ih}(F)\ne 0$ and $C_X(F)\subseteq Y$ by construction.

The subgroup $C_{G}(Y)$ is normal and $FH$-invariant. Since $|Y|$ is also $(m,|F|)$-bounded,
$|G/C_G(Y)|$ is also $(m,|F|)$-bounded.

By Maschke's theorem, $X=Y\oplus Z$, where $Z$ is normal and
$FH$-invariant. Obviously, $C_Z(F)=0$.

We now revert to the multiplicative notation and regard $X,Y,Z$ as sections of the group $G$.
 The group $C_G(Y)/X$ acts on
$Z$. Since $C_G(Y)/X$ is faithful on $X$ and the
action is coprime, it is also faithful on $Z$, so that $Z=O_p(G_1)$, where $G_1$ is the
semidirect product $G_1=Z\rtimes C_G(Y)/X$.
The group $G_1$ with the induced action of $FH$ satisfies the hypotheses of
Proposition~\ref{p2} (with $V=Z$). Indeed, $F$ acts fixed-point-freely on
$Z=O_p(G_1)$, and $C_{G_1}(H)$ is covered by the images of subgroups of $C_G(H)$ and therefore is also nilpotent.
By Proposition~\ref{p2} we obtain that $F$ acts
trivially on $C_G(Y)/X$, so that $|C_G(Y)/X| \leq |C_G(F)|=m$. As
a result, $|G/O_{p',p}(G)|=|G/C_G(Y)|\cdot |C_G(Y)/X|$ is $(m,|F|)$-bounded, as required.
 \ep

\section{Lie ring theorem}\label{s-l}

In this section a finite Frobenius group
$FH$
with cyclic kernel $F$ of order $n$ and
complement $H$ of order $q$ acts by automorphisms on a
Lie ring $L$ in whose ground ring $n$ is invertible.
If the fixed-point subring $C_L(H)$ of the
complement is nilpotent of class $c$ and $C_L(F)=0$,
that is, the kernel $F$ acts without non-trivial fixed points on
$L$, then by the Makarenko--Khukhro--Shumyatsky
theorem~\cite{khu-ma-shu} the Lie ring $L$ is nilpotent of
$(c,q)$-bounded class. In Theorem~\ref{t-l} we have
$|C_L(F)|=m$ and need to prove that $L$ contains a nilpotent
subring of $(m,n,c)$-bounded index (in the additive group)
and of $(c,q)$-bounded nilpotency class.

The proof of Theorem~\ref{t-l} uses the method of graded centralizers, which
was developed in the authors' papers on groups and Lie rings with almost regular automorphisms
\cite{kh2,khmk1,khmk3, khmk4,khmk5}; see also Ch.~4 in \cite{kh4}. This method consists in
 the following. In the proof of Theorem~\ref{t-l} we can assume that
 the ground ring contains a primitive $n$th root of unity~$\omega$. Let $F=\langle \varphi\rangle$.
Since $n$ is invertible in the ground ring, then $L$ decomposes into the direct sum
of the ``eigenspaces''
$L_j=\{ a\in L\mid a^{\varphi}=\omega ^ja\}$, which are also
 components of a $(\Z /n\Z)$-grading: $[L_s,\, L_t]\subseteq
L_{s+t},$ where $s+t$ is calculated modulo $n$. In each of the $L_i$,
$i\ne 0$, certain additive subgroups
 $L_i(k)$ of bounded index --- ``graded centralizers'' ---
 of increasing levels $k$ are successively constructed, and simultaneously
certain elements (representatives) $x_i(k)$ are fixed, all this up
to a certain $(c,q)$-bounded level $T$. Elements of $L_j(k)$
have a centralizer property with respect to the fixed elements of
lower levels: if a commutator (of bounded weight) that involves
exactly one element $y_j(k)\in L_j(k)$ of level $k$ and some fixed
elements $x_i(s)\in L_i(s)$ of lower levels $s<k$ belongs to
$L_0$, then this commutator is equal to~$0$. The sought-for
subring $Z$ is generated by all the $L_i(T)$, $i\ne 0$, of the
highest level $T$. The proof of the fact that the subring $Z$ is
nilpotent of bounded class is based on a combinatorial fact
following from the Makarenko--Khukhro--Shumyatsky
theorem~\cite{khu-ma-shu, khu-ma-shu-DAN} for the case $C_L(F)=0$
(similarly to how combinatorial forms of the Higman--Kreknin--Kostrikin theorems were used in
our papers \cite{kh2,mk05}
on almost fixed-point-free automorphisms). The
question of nilpotency is reduced to consideration of commutators of a
special form, to which the aforementioned centralizer
property is applied.

First we recall some definitions and notions.
Products in a Lie ring are called ``commutators''.
The Lie subring generated by a subset~$S$ is denoted by $\langle S\rangle $, and the ideal by
${}_{{\rm id}}\!\left< S \right>$.

Terms of the lower central series of a Lie ring $L$
are defined by induction: $\gamma_1(L)=L$; $\gamma_{i+1}(L)=[\gamma_i(L),L].$
By definition a Lie ring $L$ is nilpotent of class~$h$ if
$\gamma_{h+1}(L)=0$.

A simple commutator $[a_1,a_2,\dots ,a_s]$ of weight (length)
 $s$ is by definition the commutator $[\dots [[a_1,a_2],a_3],\dots ,a_s]$.
 By the Jacobi identity $[a,[b,c]]=[a,b,c]-[a,c,b]$ any (complex, repeated)
commutator in some elements in any Lie ring can be
expressed as a linear combination of simple commutators of
the same weight in the same elements. Using also the anticommutativity
$[a,b]=-[b,a]$, one can make sure that in this linear combination
all simple commutators begin with some pre-assigned
element occurring in the original commutator. In particular, if
$ L=\langle S\rangle $, then the additive group $L$ is generated by simple
commutators in elements of~$S$.

Let $A$ be an additively written abelian group. A Lie ring $L$
is \textit{$A$-graded} if
$$L=\bigoplus_{a\in A}L_a\qquad \text{ and }\qquad[L_a,L_b]\subseteq L_{a+b},\quad a,b\in A,$$
where the grading components $L_a$ are additive subgroups of $L$. Elements of the $L_a$
are called \textit{homogeneous} (with respect to this grading), and commutators in homogeneous
elements \textit{homogeneous commutators}. An additive subgroup
 $H$ of $L$ is said to be \textit{homogeneous}
if $H=\bigoplus_a (H\cap L_a)$; then we set $H_a=H\cap L_a$.
Obviously, any subring or an ideal generated by homogeneous
additive subgroups is
 homogeneous. A homogeneous subring and the
quotient ring by a homogeneous ideal can be regarded as
$A$-graded rings with induced grading.

\begin{proof}[Proof of Theorem~\ref{t-l}] Recall that
$L$ is a Lie ring admitting a
Frobenius group of automorphisms $FH$ with cyclic kernel
$F=\langle \varphi\rangle$ of order $n$ invertible in the ground ring of $L$
 and with complement $H$ of order $q$ such that the subring $C_L(H)$ of fixed points
of the complement is nilpotent of class $c$ and the subring
of fixed points $C_L(F)$ of the kernel has order $m=|C_L(F)|$.
Let $\omega$ be a primitive $n$th root of unity. We extend the
ground ring by $\omega$ and denote by $\widetilde L$ the ring
$L\otimes _{{\Bbb Z} }{\Bbb Z} [\omega ]$. The group $FH$
naturally acts on $\widetilde L$; then
$C_{\widetilde L}(H)$ is nilpotent of class~$c$, and
$|C_{\widetilde L}(F)|\leq |C_L(\varphi
)|^n=m^n$. If $\tilde L$ has a nilpotent subring of
$(m,n,c)$-bounded index in the additive group and of
$(c,q)$-bounded nilpotency class, then the same holds for $L$.
Therefore we can replace $L$ by $\tilde L$, so that henceforth we
assume that the ground ring contains $\w$.

\begin{definition} We define $\varphi$-\textit{components} $L_k$ for
$k=0,\,1,\,\ldots ,n-1$ as the ``eigensubspaces''
$$L_k=\left\{ a\in L\mid a^{\varphi}=\w ^{k}a\right\} .$$
\end{definition}

Since $n$ is invertible in the ground ring, we have $L= L_0 \oplus L_1\oplus \dots \oplus L_{n-1}$
(see, for example,~\cite[Ch.~10]{hpbl}). This decomposition
is a $({\Bbb Z}/n{\Bbb Z})$-grading due to the obvious
inclusions $[L_s,\, L_t]\subseteq L_{s+t\,({\rm mod}\,n)}$, so that the $\varphi$-components are
the grading components.

\begin{definition}
We refer to elements, commutators, additive subgroups that are homogeneous with respect to this
grading into $\varphi$-components as being \textit{$\varphi$-homogeneous}.
\end{definition}

\begin{Index Convention} Henceforth a small letter
with index $i$ denotes an element of the $\varphi$-component $L_i$, so that the index only indicates the
$\varphi$-component to which this
element belongs: $x_i\in L_i$. To lighten the notation we will not
use numbering indices for elements in $L_j$, so that
different elements can be denoted by the same symbol when
it only matters to which $\varphi$-component these elements belong. For example, $x_1$ and $x_1$ can be
different elements of $L_1$, so that $[x_1,\, x_1]$ can be a nonzero element of
$L_2$. These indices will be considered modulo~$n$; for example, $a_{-i}\in
L_{-i}=L_{n-i}$.
\end{Index Convention}

Note that under the Index Convention
a $\varphi$-homogeneous commutator
 belongs to the $\varphi$-component $L_s$, where
 $s$ is the sum modulo $n$ of the indices of all the elements occurring in this
commutator.

Since the kernel $F$ of the Frobenius group $FH$ is cyclic,
the complement $H$ is also cyclic. Let $H= \langle h
\rangle$ and $\varphi^{h^{-1}} = \varphi^{r}$ for some $1\leq
r \leq n-1$. Then $r$ is a primitive $q$th root of unity in the ring ${\Bbb Z}/n {\Bbb Z}$.

The group $H$ permutes the $\varphi$-components $L_i$ as follows:
$L_i^h = L_{ri}$ for all $i\in \Bbb Z/n\Bbb Z$.
Indeed, if $x_i\in L_i$, then $(x_i^{h})^{\varphi} =
x_i^{h\varphi h^{-1}h} = (x_i^{\varphi^{r}})^h =\omega^{ir}x_i^h$.

\begin{notation} In what follows, for a given $u_k\in L_k$ we denote the element
$u_k^{h^i}$ by $u_{r^ik}$ under the Index Convention,
since $L_k^{h^i} = L_{r^ik}$. We denote the $H$-orbit of an element $x_i$ by
$O(x_{i})=\{x_{i},\,\, x_{ri},\dots, x_{r^{q-1}i}\}$.
\end{notation}

\paragraph{Combinatorial theorem.} We now prove a combinatorial consequence of
the Makarenko--Khukhro--Shumyatsky theorem in~\cite{khu-ma-shu}. Recall
that we use the centralizer notation $C_L(A)$ for the fixed-point
subring of a Lie ring $L$ admitting a group of automorphisms~$A$.

\begin{theorem} [{\cite[Theorem~5.6(iii)]{khu-ma-shu}}]\label{kh-ma-shu10-1}
Let $FH$ be a Frobenius group with cyclic kernel $F$
of order $n$ and complement $H$ of order $q$. Suppose that $FH$
acts by automorphisms on a Lie ring
$M$ in the ground ring of which $n$ is invertible. If
$C_M(F)=0$ and $C_M(H)$ is nilpotent
of class $c$, then for some $(c,q)$-bounded number $f=f(c,q)$ the
Lie ring $M$ is nilpotent of class at most $f$.
\end{theorem}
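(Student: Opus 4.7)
The plan is to exploit the $\Z /n\Z $-grading given by $\varphi $-eigenspaces. First, extend the ground ring by a primitive $n$th root of unity $\w $ (legitimate since $n$ is invertible in it), so that $M=\bigoplus _{i\in \Z /n\Z }M_i$ with $M_i=\{a\in M\mid a^{\varphi }=\w ^{i}a\}$; the hypothesis $C_M(F)=0$ says precisely that $M_0=0$. The complement $H=\langle h\rangle $ permutes the grading components via $M_i\mapsto M_{ri}$, where $r\in \Z /n\Z $ has multiplicative order $q$. In particular every nonzero $\varphi $-homogeneous commutator has nonzero index modulo $n$.

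The next step is to harvest identities from the nilpotency of $C_M(H)$. For any $\varphi $-homogeneous $x_i\in M_i$, the $H$-orbit sum $\sigma (x_i):=x_i+x_i^h+\cdots +x_i^{h^{q-1}}$ lies in $C_M(H)$, so for every choice of $\varphi $-homogeneous elements $x^{(j)}_{i_j}$,
\[
[\sigma (x^{(0)}_{i_0}),\sigma (x^{(1)}_{i_1}),\ldots ,\sigma (x^{(c)}_{i_c})]=0.
\]
Expanding each $\sigma $ by multilinearity of the Lie bracket presents the left-hand side as a sum of $q^{c+1}$ simple commutators in the $H$-displaced entries $x^{(j)}_{r^{k_j}i_j}$, each itself $\varphi $-homogeneous with index equal to the sum of its entries' indices modulo~$n$.

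The core combinatorial step is to convert this family of identities (as the initial indices $i_j$ vary) into a genuine nilpotency bound on simple commutators of homogeneous elements. Since the grading is a direct sum, each $\varphi $-component of every such identity must vanish separately; moreover, any term of the expansion falling in $M_0$ is automatically zero, which eliminates a systematic fraction of the monomials and breaks the $H$-symmetry of the identity. Combining this with the freedom to vary the $i_j$ and with the Jacobi identity and anticommutativity (which let one rewrite any complex commutator as a linear combination of simple commutators beginning with any preassigned entry), one aims to produce, in the spirit of the combinatorial formulation of the Higman--Kreknin--Kostrikin theorem adapted to the Frobenius-orbit setting, linear relations from which one proves by induction on weight $w$ (with $c$ also treated inductively) that every simple commutator of weight exceeding some $f=f(c,q)$ in $\varphi $-homogeneous elements is zero.

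The principal obstacle is precisely this last extraction: the bound $f$ must depend only on $c$ and $q$, while the indices naturally range over $\Z /n\Z $, so one must arrange that sufficiently many independent identities are obtained by working in a small $n$-free ``window'' of indices and their $H$-displacements, controlling carefully the interaction between the $H$-action on indices and the Jacobi rearrangements. Once the bound $f$ is established on $\varphi $-homogeneous elements, nilpotency of $M$ of class at most $f$ follows at once: the additive group of $M$ is spanned by $\varphi $-homogeneous elements, so $\g _{f+1}(M)$ is spanned by simple commutators of weight $f+1$ in such elements, all of which vanish by construction.
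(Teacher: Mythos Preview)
This theorem is not proved in the present paper at all: it is quoted verbatim from \cite[Theorem~5.6(iii)]{khu-ma-shu} and used as a black box in the proof of Proposition~\ref{combinatorial}. So there is no ``paper's own proof'' to compare your proposal against.

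As for your proposal itself, it is a plausible outline of the strategy that \cite{khu-ma-shu} in fact follows, but it is not a proof. The setup (eigenspace grading, $M_0=0$, $H$ permuting components by $i\mapsto ri$, orbit sums landing in $C_M(H)$, multilinear expansion of the $(c+1)$-fold commutator identity) is all correct and standard. The entire content of the theorem, however, lies in what you call the ``core combinatorial step'', and there you only \emph{describe} what must be done rather than do it: you say one ``aims to produce'' relations from which nilpotency of class $f(c,q)$ follows ``by induction on weight'', and you explicitly flag the $n$-independence of the bound as ``the principal obstacle'' without indicating how to overcome it. That is precisely the hard part of \cite{khu-ma-shu}, and it does not fall out of Jacobi rearrangements and grading-component separation alone; the actual argument there is a delicate induction exploiting a specific combinatorial structure of the $H$-orbit indices. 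Your sketch does not supply this, so as written it has a genuine gap at the decisive point.
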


We need the following consequence for our Lie ring $L$ under the
hypotheses of Theorem~\ref{t-l}, the ground ring of which we
already assume to contain the $n$th primitive root of unity, so
that $L=L_0\oplus L_1\oplus\dots\oplus L_{n-1}$.

\begin{proposition}\label{combinatorial}
Let $f(c,q)$ be the function in Theorem~\ref{kh-ma-shu10-1} and let $T=f(c,q)+1$.
Every simple
$\varphi$-homogeneous
commutator $[x_{i_1}, x_{i_2},\ldots, x_{i_{T}}]$ of weight $T$ with non-zero indices can be
represented as a linear combination of $\varphi$-homogeneous simple
commutators of the same weight $T$ in elements of the union of $H$-orbits
$\bigcup_{s=1}^T O(x_{i_s})$ each of which contains an initial segment
with zero sum of indices modulo $n$
and includes exactly the same number of elements of each $H$-orbit
$O(x_{i_s})$ as the original commutator.
\end{proposition}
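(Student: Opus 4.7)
The plan is to apply the Makarenko--Khukhro--Shumyatsky Theorem~\ref{kh-ma-shu10-1} to a suitable quotient of the $FH$-invariant subring generated by the given orbits, and then unfold the resulting ideal membership into the required left-normed identity.

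Concretely, let $L^{*}$ be the $FH$-invariant Lie subring of $L$ generated by $\bigcup_{s=1}^{T} O(x_{i_s})$; it inherits the $\varphi$-grading $L^{*}=\bigoplus_{k}L^{*}_{k}$ with $L^{*}_{k}=L^{*}\cap L_{k}$. The component $L^{*}_{0}$ is $FH$-invariant, so the ideal $J={}_{\rm id}\langle L^{*}_{0}\rangle$ is $FH$-invariant, and the quotient $M:=L^{*}/J$ inherits an action of $FH$ with $C_{M}(F)=M_{0}=0$. By the standard coprime-action argument (applied after any preliminary base extension needed to make $|H|$ invertible), the natural map $C_{L^{*}}(H)\to C_{M}(H)$ is surjective, so $C_{M}(H)$ is a homomorphic image of a subring of $C_{L}(H)$, and hence nilpotent of class at most $c$. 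Theorem~\ref{kh-ma-shu10-1} then yields $\gamma_{T}(M)=\gamma_{f(c,q)+1}(M)=0$, whence
\[
[x_{i_{1}},x_{i_{2}},\ldots,x_{i_{T}}]\in J={}_{\rm id}\langle L^{*}_{0}\rangle.
\]

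To convert this membership into the claimed form, I use that ${}_{\rm id}\langle L^{*}_{0}\rangle$ is additively spanned by left-normed products $[w_{0},a_{1},\ldots,a_{k}]$ with $w_{0}\in L^{*}_{0}$ and $a_{j}\in L^{*}$. Every $w_{0}$ is a linear combination of simple $\varphi$-homogeneous commutators in orbit elements with index sum $\equiv 0\pmod n$, and every $a_{j}$ is a linear combination of simple commutators in orbit elements. Iterated use of the Jacobi identity to push all factors to the right of the distinguished initial commutator then expresses the original commutator as a linear combination of simple left-normed commutators $[g_{1},\ldots,g_{T}]$ in elements of $\bigcup_{s}O(x_{i_{s}})$, in each of which the initial segment $[g_{1},\ldots,g_{k}]$ inherited from $w_{0}$ has index sum $\equiv 0\pmod n$. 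To track multiplicities from each orbit, I would actually carry out the whole construction in the free $(\mathbb{Z}/n\mathbb{Z})$-graded Lie ring on formal copies of the orbit elements, project the identity onto the multi-degree of the original commutator, and specialize via the evident $FH$-equivariant evaluation homomorphism back to $L$.

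The main obstacle I anticipate is controlling $C_{M}(H)$: showing that it inherits nilpotency of class $c$ from $C_{L}(H)$ hinges on the surjectivity of $C_{L^{*}}(H)\to C_{M}(H)$, for which some form of coprime action of $H$ is needed even though the hypothesis only provides invertibility of $|F|=n$. The resolution is either a base extension making $|H|$ invertible (with a descent argument justified by the fact that the original commutator lies in the canonical image of $L^{*}$) or constructing $M$ directly as a relatively-free object in the variety defined by $C(F)=0$ together with nilpotency of class $c$ of the $H$-fixed subring. The remaining work --- rewriting via Jacobi to extract the left-normed form with the initial segment of zero index sum, and invoking multi-homogeneity to confirm the multiplicity condition --- is combinatorial and routine.
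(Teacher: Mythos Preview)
Your proposal contains the right ingredients but mislocates the difficulty and leaves the actual crux unexecuted.

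The obstacle you flag is not one. Once $M_{0}=0$, the complement $H$ permutes the remaining $\varphi$-components $M_{1},\dots,M_{n-1}$ in regular orbits of length $q$ (this is precisely the Frobenius condition: $H$ is fixed-point-free on the non-trivial characters of $F$). Hence every element of $C_{M}(H)$ is of the form $m+m^{h}+\cdots+m^{h^{q-1}}$ and visibly lifts; no invertibility of $|H|$ or base extension is needed.

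Conversely, your first pass in $L^{*}\subseteq L$ does not deliver the proposition. Membership in ${}_{\rm id}\langle L^{*}_{0}\rangle$ carries no weight information, since $L^{*}$ has no intrinsic $\mathbb{N}$-grading: the expansion into orbit elements may involve terms of arbitrary weights cancelling via accidental relations in $L$. So your final paragraph is not a refinement for the multiplicity clause---it is the whole argument, and this is exactly what the paper does. One works from the outset in the free Lie ring $K$ on $qT$ formal generators, which carries both the $\mathbb{N}$-grading and the $(\mathbb{Z}/n\mathbb{Z})$-grading.

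The step you have not written down is the one that matters in that free setting: there $C_{K}(H)$ is far from nilpotent, so quotienting only by $J={}_{\rm id}\langle K_{0}\rangle$ does not put you under Theorem~\ref{kh-ma-shu10-1}. The paper also kills $I={}_{\rm id}\langle\gamma_{c+1}(C_{K}(H))\rangle^{F}$; this ideal is homogeneous for both gradings and $FH$-invariant, so $K/(I+J)$ satisfies the theorem and the free commutator lands in $I+J$. Weight-homogeneity then pins the representation to weight $T$, and the orbit-annihilating endomorphisms $\theta_{s}$ (which commute with $FH$ and hence preserve $I$) strip out any term not meeting every orbit exactly once. Finally the evaluation $\delta:K\to L$ kills $I$ because $\gamma_{c+1}(C_{L}(H))=0$. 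Your ``relatively-free object'' remark is pointing at exactly this $I$, but it should be the backbone of the proof rather than a parenthetical alternative.
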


\begin{proof}
The idea of the proof is application of
Theorem~\ref{kh-ma-shu10-1} to a free Lie ring with operators
$FH$. Recall that we can assume that the ground ring $R$ of our
Lie ring $L$ contains a primitive root of unity $\w$ and a
multiplicative inverse of $n$.  Given arbitrary (not necessarily
distinct) non-zero
 elements $i_1, i_2,\cdots, i_T\in \Bbb Z /n{\Bbb Z}$, we
consider a free Lie ring $K$ over $R$ with $qT$ free generators in
the set
$$Y=\{\underbrace{y_{i_1}, y_{ri_1}, \ldots, y_{r^{q-1}i_1}}_{O(y_{i_1})},\,\,\,
\underbrace{y_{i_2}, y_{ri_2}, \ldots,
y_{r^{q-1}i_2}}_{O(y_{i_2})},\ldots,
\underbrace{y_{i_{T}},y_{ri_T}, \ldots,
y_{r^{q-1}i_T}}_{O(y_{i_T})}\},$$
where indices  are formally assigned and regarded modulo $n$ and the subsets
$O(y_{i_s})=\{y_{i_s}, y_{ri_s}, \ldots, y_{r^{q-1}i_s}\}$ are disjoint.
Here, as in the Index Convention, we do not use numbering indices, that is,
all elements $y_{r^ki_j}$ are by definition different free generators, even if
indices coincide. (The Index Convention will come into force in a moment.) For
every $i=0,\,1,\,\dots ,n-1$ we define
the additive subgroup $K_i$ generated by all
commutators in the generators $y_{j_s}$ in which the sum of
indices of all entries is equal to $i$ modulo $n$. Then
$K=K_0\oplus K_1\oplus \cdots \oplus K_{n-1}$. It is also obvious that
$ [K_i,K_j]\subseteq K_{i+j\,({\rm mod\, n)}}$; therefore this is a
$({\Bbb Z} /n{\Bbb Z})$-grading. The Lie ring $K$ also has
the natural ${\Bbb N}$-grading $K=G_1(Y)\oplus G_2(Y)\oplus \cdots $
with respect to the generating set $Y$, where $G_i(Y)$ is the additive subgroup generated by all
commutators of weight $i$ in elements of $Y$.

We define an action of the Frobenius group $FH$ on $K$ by setting
$k_i^{\varphi}=\omega^i k_i$ for $k_i\in K_i$ and extending this
action to $K$ by linearity. Since $K$ is the direct sum
of the additive subgroups $K_i$ and $n$ is invertible in the ground ring, we have
$K_i=\{k\in K \mid k^{\varphi}=\omega^i k \}$. An action of $H$ is defined on the generating set $Y$
as a cyclic permutation of elements in each subset $O(y_{i_s})$ by the rule
$(y_{r^ki_s})^h=y_{r^{k+1}i_s}$ for $k=0,\ldots, q-2$ and $(y_{r^{q-1}i_s})^h=y_{i_s}$.
Then $O(y_{i_s})$ becomes the $H$-orbit
of an element $y_{i_s}$. Clearly, $H$ permutes the components $K_i$ by the rule
$K_i^h = K_{ri}$ for all $i\in \Bbb Z/n\Bbb Z$.

Let $J={}_{{\rm id}}\!\left< K_0 \right>$ be the ideal
generated by the $\varphi$-component $K_0$. Clearly,
the ideal $J$ consists of linear combinations of commutators
in elements of $Y$ each of which contains a subcommutator with
zero sum of indices modulo $n$. The
ideal $J$
is generated by homogeneous elements with respect to the gradings
$K=\bigoplus_i G_i(Y)$ and $K=\bigoplus_{i=0}^{n-1} K_i$ and therefore
is homogeneous with respect to both gradings.
Note also that the ideal $J$ is obviously $FH$-invariant.

Let $I={}_{{\rm id}}\!\left< \gamma_{c+1}(C_K(H)) \right>^F$
be the smallest $F$-invariant ideal containing the subring
$\gamma_{c+1}(C_K(H))$. The ideal $I$ is obviously homogeneous with respect to the grading
$K=\bigoplus_i G_i(Y)$ and is $FH$-invariant. Being $F$-invariant, the ideal $I$ is also
homogeneous with respect to the grading $K=\bigoplus_{i=0}^{n-1} K_i$.
Indeed, for $z\in I$, let $z=k_0+k_1+\cdots+k_{n-1}$, where $k_i\in K_i$.
On the other hand, for every $i=0,\ldots, n-1$ we have
$z_i:= \sum_{s=0}^{n-1} \omega^{-is}z^{\varphi^s}\in K_i$ and $nz=\sum_{j=0}^{n-1}z_i$.
Since $n$ is invertible in the ground ring,
$z=1/n\sum_{j=0}^{n-1}z_i$. Hence $k_i=(1/n) z_i=(1/n)\sum_{s=0}^{n-1}
\omega^{-is}z^{\varphi^s}$. Since $I$ is $\varphi$-invariant, $z^{\varphi^{j}}\in I$ for all $j$;
therefore, $k_i\in I$ for all $i$, as required.

Consider the quotient Lie ring $M=K/(J+I)$. Since the ideals $J$ and $I$
are homogeneous with respect to the gradings $K=\bigoplus_i G_i(Y)$ and
$K=\bigoplus_{i=0}^{n-1} K_i$, the quotient ring $M$ has
the corresponding induced gradings. Furthermore, $M_0=0$ by construction of $J$. Therefore
in the induced action of the group $FH$ on $M$ we have $C_M(F)=0$, since $n$ is invertible in the ground ring.
The group $H$ permutes the grading components of $M=M_1\oplus\dots \oplus M_{n-1}$ with regular orbits of length $q$.
Therefore elements of $C_M(H)$ have the form $m+m^h+\dots +m^{h^{q-1}}$. Hence $C_M(H)$ is the image of $C_K(H)$
in $M=K/(I+J)$ and $\gamma_{c+1}(C_M(H)) =0$ by construction of $I$.
By Theorem~\ref{kh-ma-shu10-1} $M$ is nilpotent of $(c,q)$-bounded class
$f=f(c,q)$. Consequently,
$$
[y_{i_1}, y_{i_2},\ldots, y_{i_{T}} ]\in J+I={}_{{\rm id}}\!\left<
K_0 \right>+{}_{{\rm id}}\!\left<\gamma_{c+1}(C_K(H)) \right>^F.
$$
Since both ideals are homogeneous with respect to the grading $K=\bigoplus_i G_i(Y)$,
this means that the commutator $[y_{i_1}, y_{i_2},\ldots, y_{i_{T}}]$
is equal modulo the ideal $I$ to a linear combination
of commutators of the same weight $T$
 in elements of $Y$ each of which
contains a subcommutator with zero sum of indices modulo $n$.

We claim that in addition every commutator in this linear combination
can be assumed to include exactly one element of the $H$-orbit $O(y_{i_s})$ for every $s=1,\ldots, T$.
 For every $s=1,\ldots,T$ we consider the homomorphism $\theta_s$
extending the mapping
 $$
 O(y_{i_s}) \rightarrow 0; \qquad y_{i_k}\rightarrow y_{i_k} \quad \text{if}\quad k\neq s.
 $$
 We say for brevity that a commutator \textit{depends on $ O(y_{i_s})$} if it involves at least one element of
 $ O(y_{i_s})$. The homomorphism $\theta_s$ sends to 0 every commutator in
elements of $Y$ that depends on $ O(y_{i_s})$, and acts as identity on commutators
that are independent of $ O(y_{i_s})$. Hence $\theta_s$ clearly
commutes with the action of $H$. The automorphism $\varphi$ acts on any
homogeneous commutator by multiplication by some power of $\w$.
Therefore $\theta_s$ also commutes with the action of $F$. It
follows that
the ideal $I$ is invariant under $\theta_s$. Indeed,
$C_K(H)$ is
$\theta_s$-invariant, then so is the ideal ${}_{{\rm id}}\!\left< \gamma_{c+1}(C_K(H)) \right>$.
Since $\theta_s$ commutes with the action of $F$, the $F$-closure of the latter ideal,
which is $I$, is also $\theta_s$-invariant.

We apply the homomorphism $\theta_s$ to the commutator $[y_{i_1},
y_{i_2},\ldots, y_{i_{T}}]$ and its representation modulo $I$ as a
linear combination of commutators in elements of $Y$ of weight $T$
containing subcommutators with zero sum of indices modulo $n$.
The image $\theta_s([y_{i_1}, y_{i_2},\ldots,
y_{i_{T}}])$ is equal to $0$, as well as the image of any
commutator depending on $O(y_{i_s})$. Hence in the representation of
$[y_{i_1}, y_{i_2},\ldots, y_{i_{T}}]$ the part of the linear combination
in which commutators are independent of $O(y_{i_s})$ is equal to zero and can be excluded
from the expression. By applying
consecutively $\theta_s$, $s=1,\ldots, T$, and excluding
commutators independent of $O(y_{i_s})$, $s=1,\ldots,
T$, in the end we obtain modulo $I$ a linear combination of
commutators each of which contains at least one element from every
orbit $O(y_{i_s})$, $s=1,\ldots, T$. Since under these
transformations the weight of commutators remains the same and is
equal to $T$, no other elements can appear, and every commutator
will contain exactly one element in every orbit $O(y_{i_s})$,
$s=1,\ldots, T$.

Now suppose that $L$ is an arbitrary Lie ring over $R$ satisfying
the hypothesis of Proposition~\ref{combinatorial}. Let $x_{i_1},
x_{i_2},\ldots, x_{i_{T}} $ be arbitrary $\varphi$-homogeneous
elements of $L$.
We define the homomorphism $\delta$ from the free Lie ring $K$
into $L$ extending the mapping
$$
y_{r^ki_s}\to x_{i_s}^{h^k}\quad \text{for} \quad
s=1,\ldots,T \quad \text{and}\quad k=0,1,\ldots,q-1.
$$
It is easy to see that $\delta$ commutes with the action of $FH$ on $K$ and $L$. Therefore
$\delta (O(y_{i_s}))=O(x_{i_s})$ and
$\delta (I)=0$, since $\gamma_{c+1}(C_L(H)) =0$ and $\delta(C_K(H))\subseteq C_L(H)$.
We now apply $\delta$ to the representation
 of the commutator $[y_{i_1}, y_{i_2},\ldots, y_{i_{T}}]$ constructed above. Since $\delta(I)=0$,
 as the image we
 obtain a representation of the commutator $[x_{i_1},x_{i_2},\ldots, x_{i_{T}} ]$ as a linear
combination of commutators of weight $T$
in elements of the set $\delta (Y)=\bigcup_{s=1}^T O(x_{i_s})$
each of which contains exactly the same number of elements from
every $H$-orbit $O(x_{i_s})$, $s=1,\ldots, T$, as the original
commutator, and has a subcommutator with zero sum of indices
modulo $n$. (Unlike the disjoint orbits $O(y_{i_s})$ in $K$, their
images may coincide in $L$, which is why we can only claim ``the
same number of elements'' from each of them, rather than exactly
one from each.)

Finally,
by the anticommutativity and Jacobi identities we can transform this  linear combination
into another one of simple commutators in the same elements each having
an initial segment with zero sum of indices.
The proposition is proved.
\end{proof}

\begin{definition} We define a \textit{KMS-transformation} of a commutator $[x_{i_1},
x_{i_2},\ldots, x_{i_l}]$ for $l\geq T$ to be its representation according to Proposition~\ref{combinatorial} as
a linear combination of simple
commutators of the same weight
in elements of $\bigcup_{s=1}^T O(x_{i_s})$
each of which
 has an initial segment from $L_0$ of weight $\leq
T$, that is, commutators of the form
$$
[c_0,y_{j_{w+1}},\ldots, y_{j_{l}}],
$$
where $c_0=[y_{j_1},\ldots, y_{j_w}]\in L_0$, $w\leq T$, $y_{j_k}\in O(x_{i_1})\cup \cdots \cup O(x_{i_l})$,
with subsequent re-denoting
\be\label{re-den}
z_{j_{w+1}}=-[c_0, y_{j_{w+1}}],\qquad  z_{i_{w+s}}=y_{i_{w+s}}
\,\,\,\text{for}\,\,\, s>1.
\ee
\end{definition}

The following assertion is obtained by repeated application
of KMS-transformations.

\begin{proposition} \label{kh-ma-shu-transformation}
For any positive integers $t_1$ and $t_2$ there exists a $(t_1,t_2, c,q)$-bounded positive
integer $V=V(t_1,t_2, c,q)$ such that any simple $\varphi$-homogeneous commutator
$[x_{i_1}, x_{i_2},\ldots, x_{i_{V}}]$ of weight $V$ with non-zero indices
can be
represented as a linear combination of $\varphi$-homogeneous
commutators of the same weight
in elements of the set $X=\bigcup_{s=1}^V
O(x_{i_s})$
each having either a subcommutator of the form
\begin{equation}\label{f1}
[u_{k_1},\ldots, u_{k_s}]
\end{equation}
with $t_1$ different initial segments with zero sum
of indices modulo~$n$, that is, with $k_1+k_2+\cdots+k_{r_i}\equiv 0\; ({\rm mod}\, n)$ for
$ 1<r_1<r_2<\cdots<r_{t_1}=s$,
or a subcommutator of the form
\begin{equation}\label{f2}
[u_{k_0}, c^1,\ldots, c^{t_2}],
\end{equation}
where $u_{k_0}\in X$, every $c^i$ belongs to $L_0$ (with numbering upper
indices $i=1,\ldots,t_2$)
and has the form $[a_{k_1},\ldots, a_{k_i}]$ for $a_{k_j}\in X$ with $k_1+\cdots+k_i\equiv 0\; ({\rm mod}\, n)$.
Here we can set
$V(t_1,t_2,c,q)= \sum_{i=1}^{t_1}((f(c,q)+1)^2t_2)^i+1$.
\end{proposition}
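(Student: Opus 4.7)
The plan is to iterate the KMS-transformation of Proposition~\ref{combinatorial}: each application exposes a new $L_0$-subcommutator of weight at most $T:=f(c,q)+1$, and we continue iterating until either $t_1$ such subcommutators are arranged into the consecutive initial segments of form~\eqref{f1}, or $t_2$ of them are separated out as the inserted factors $c^i$ in form~\eqref{f2}. We argue by induction on $t_1$, using the recurrence $V(t_1,t_2,c,q)=T^2 t_2\cdot V(t_1-1,t_2,c,q)+1$ with $V(0,t_2,c,q)=1$; this yields exactly the closed form $\sum_{i=1}^{t_1}(T^2 t_2)^i+1$ stated in the proposition.

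For the base case $t_1=1$ we have $V\geq T$, so a single KMS-transformation applied to the first $T$ entries of $[x_{i_1},\ldots,x_{i_V}]$ expresses it as a linear combination of simple commutators in $X=\bigcup_s O(x_{i_s})$, each with an initial subcommutator of weight $\leq T$ lying in $L_0$, i.e.\ a form~\eqref{f1} subcommutator with one zero-sum initial segment.

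For the inductive step, we apply KMS to the initial length-$T$ segment. Each resulting term has the shape $[c_0,y_{j_{w+1}},\ldots,y_{j_T},x_{i_{T+1}},\ldots,x_{i_V}]$ with $c_0\in L_0$ of weight $w\leq T$. Setting $z:=-[c_0,y_{j_{w+1}}]\in L_{j_{w+1}}$, which has non-zero $\varphi$-index since $c_0\in L_0$ and $j_{w+1}\neq 0$, the residual simple commutator $[z,y_{j_{w+2}},\ldots,x_{i_V}]$ has weight $V-w\geq V-T\geq V(t_1-1,t_2,c,q)$ in atoms of non-zero $\varphi$-index, so the inductive hypothesis yields a representation each of whose terms has either a form~\eqref{f1} subcommutator with $t_1-1$ initial zero-sum segments or a form~\eqref{f2} subcommutator with $t_2$ inserted $L_0$-factors. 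The latter case already delivers the desired form~\eqref{f2}. In the former case, if the inductive form~\eqref{f1} subcommutator is an initial segment of the residual starting with $z$, then unfolding $z$ into $[c_0,y_{j_{w+1}}]$ prepends one additional zero-sum initial segment of weight $w$, producing $t_1$ initial segments in elements of $X$; if instead it sits inside the tail and is already free of $z$, then it transfers to the original commutator unchanged.

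The main obstacle is the bookkeeping around the compound atom $z$ and its $H$-orbit~$O(z)$, whose elements are weight-$(w+1)$ simple commutators in the original $X$ rather than single elements of~$X$. In the inductive output, any interior occurrence of an element of $O(z)$ inside a form~\eqref{f1} or form~\eqref{f2} subcommutator must be unfolded via anticommutativity and the Jacobi identity, scattering the subcommutator into a sum of simple commutators in the original~$X$; one then needs to verify that at least one of the resulting pieces is still of form~\eqref{f1} or~\eqref{f2} with atoms in $X$ and with the orbit multiplicities inherited from Proposition~\ref{combinatorial}. It is precisely the cost of these Jacobi unfoldings that forces the multiplicative factor $T^2 t_2$ in the recurrence rather than a mere additive~$T$: at each of the $t_1$ levels we must budget not only the $T$ atoms consumed by the KMS-application itself, but also up to~$T t_2$ further atoms needed, in the worst case, to realize the $t_2$ split-off $L_0$-pieces that become the $c^i$'s of form~\eqref{f2}.
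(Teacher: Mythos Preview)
Your overall strategy --- iterate KMS-transformations and run an induction on $t_1$ --- is exactly what the paper does (the paper merely sketches this and refers to \cite{kh2} and \cite[\S4.4]{kh4} for the details). However, your inductive step has a genuine gap in the case you label (b).

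After applying the inductive hypothesis to the residual $[z,y_{j_{w+2}},\dots]$, a term may carry a form~\eqref{f1} subcommutator $S$ with $t_1-1$ zero-sum initial segments that involves \emph{no} element of $O(z)$. You assert that then $S$ ``transfers to the original commutator unchanged''. It does --- but $S$ still has only $t_1-1$ zero-sum initial segments, whereas the proposition requires $t_1$. Unfolding the single $O(z)$-atom elsewhere in the term produces an additional $L_0$-subcommutator $c_0^{h^k}$, but it is \emph{disjoint} from $S$, not nested inside it, so you cannot promote $S$ to $t_1$ segments; and two disjoint $L_0$-pieces do not give form~\eqref{f2} either, since that needs $t_2$ of them arranged consecutively. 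Your final paragraph concedes that unfolding must be checked but does not carry out this case, and the hand-waving about the factor $T^2t_2$ does not repair it.

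The argument in \cite{kh2,kh4} that the paper invokes handles exactly this dichotomy: one does not apply the full $t_1{-}1$ hypothesis after a single KMS step. Instead, at each level of the $t_1$-induction one applies KMS repeatedly (up to roughly $Tt_2$ times), each time asking whether the new $L_0$-piece contains the current compound atom. If at some step it does, one has genuine nesting --- two $L_0$-initial-segments one inside the other --- and only then invokes the hypothesis for $t_1-1$. If it never does through $t_2$ iterations, the accumulated \emph{disjoint} $L_0$-pieces can be organised (after Jacobi rearrangement) into the shape~\eqref{f2}. It is this repeated branching inside each level, with compound atoms of weight $\leq T$ being fed back in, that produces the multiplicative factor $T^2t_2$ in the recurrence; your single-KMS-then-recurse scheme would only justify an additive $T$, which is another symptom that the induction as you set it up cannot close.
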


\begin{proof} The proof practically word-for-word repeats the proof
of the Proposition in~\cite{kh2} (see also Proposition~4.4.2 in
\cite{kh4}), with the HKK-transformations replaced by the
KMS-transformations. Namely, the KMS-transformation is
applied to the initial segment of length $T$ of the commutator.
Each of the resulting commutators contains a subcommutator in
$L_0$, which becomes a part of a new element of type $z_1$ in
\eqref{re-den}. Then the KMS-transformation is applied again, and
so on. Note that images under $H$ of  commutators in elements of the $H$-orbits
of the $x_{i_j}$ are again commutators in elements of the $H$-orbits of the $x_{i_j}$.
Subcommutators in $L_0$ are thus accumulated, either
nested (for example, if, say, at the second step the element $z_1$
containing $c_0\in L_0$ is included in the new subcommutator in
$L_0$), or disjoint (if, say, at the second step $z_1$ is not
included in a new subcommutator in $L_0$). Nested accumulation leads to \eqref{f1}, and disjoint to \eqref{f2}.
The total number of subcommutators in $L_0$ increases at every step, and
the required linear combination is achieved after sufficiently many steps.

The precise details of the proof by induction can be found in~\cite{kh2} or \cite[\S\,4.4]{kh4}.
The only difference is that HKK-transformations always produce commutators in the same elements as the original
commutator $[x_{i_1}, x_{i_2},\ldots, x_{i_{V}}]$, while after the
KMS-transformation the resulting commutators may also involve
images of the elements $x_{i_j}$ under the action of $H$.
This is why elements of the $H$-orbits have to
appear in the conclusion of the proposition.
\end{proof}

\paragraph{Representatives and graded centralizers.}
We begin construction of graded centralizers by induction on the level
taking integer values from $0$ to $T$, where
the number $T=T(c,q)= f(c,q)+1$ is defined in
Proposition~\ref{combinatorial}. A graded centralizer $L_j(s)$
of level $s$ is a certain additive subgroup
 of the $\varphi$-component~$L_j$. Simultaneously with construction of
graded centralizers we fix certain elements of them ---
representatives of various levels --- the total number of which
is $(m,n,c)$-bounded.

\begin{definition} The \textit{pattern} of a commutator in $\varphi$-homogeneous elements
(of various~$L_i$) is defined as its bracket structure together with the arrangement of
indices under the Index Convention. The \textit{weight} of a pattern is the
weight of the commutator. The commutator itself is called the value of its
pattern on given elements.
\end{definition}

\begin{definition} Let $\vec x=(x_{i_1},\dots ,x_{i_k})$ be
an ordered tuple of elements $x_{i_s}\in L_{i_s}$,
$i_s\ne 0$, such that $i_1+\dots + i_k\not\equiv 0\,
({\rm mod}\, n)$. We set $j=-i_1-\dots - i_k\,({\rm mod}\, n)$ and
define the mapping
\begin{equation}\label{vartheta}
\vartheta _{\vec x}: y_j\rightarrow [y_j, x_{i_1}, \dots , x_{i_k}].
\end{equation}
\end{definition}
By linearity this is a homomorphism of the additive subgroup
 $L_j$ into $L_0$. Since $|L_0|= m$, we have $|L_j : {\rm Ker}\,
\vartheta_{\vec x}|\leq m$.

\begin{notation} Let $U=U(c,q)=V(T, T-1, c,q )$, where
$V$ is the function in Proposition~\ref{kh-ma-shu-transformation}. \end{notation}

\begin{definition0} Here we only fix
representatives of level $0$. First, for every pair $({\bf p},c)$ consisting of a pattern ${\bf p}$
of a simple commutator of weight $\leq U$ with non-zero indices of entries and zero
sum of indices and a commutator $c\in L_0$ equal to the value of ${\bf p}$ on
$\varphi$-homogeneous elements of various $L_{i}$, $i\ne 0$, we fix one such representation.
The elements of $L_{j}$,\, $j\ne 0$, occurring in
this fixed representation of $c$ are called
\textit{representatives of level~$0$}. Representatives of level $0$
are denoted by $x_j (0)$  (with letter ``$x$'')  under the Index Convention
(recall that the same symbol can denote different
elements). Furthermore, together with every representative $x_j(0)\in L_j$,
$j\ne 0$, we fix all elements of its $H$-orbit
$$O(x_j(0))=\{x_j(0), x_j(0)^h,\,\ldots,
x_j(0)^{h^{q-1}} \},
$$
and also call them \textit{representatives
of level~$0$}. Elements of these orbits are denoted by
$x_{r^{s}j}(0):=x_j(0)^{h^{s}}$ under the Index Convention
(since $L_i^h\leq L_{ri}$).
\end{definition0}

The total number of patterns ${\bf p}$ of weight $\leq U$ is
$(n,c)$-bounded, $|L_0|=m$, and $|O(x_j(0))|=q$; hence the number of
representatives of level $0$ is $(m,n,c)$-bounded.

\begin{definition3}
Suppose that we have already fixed
 $(m,n,c)$-boundedly many representatives of levels $<t$.
We define \textit{graded centralizers of level $t$}
by setting, for every
 $j\ne 0$,
$$
L_j(t)=\bigcap_{\vec x} {\rm Ker}\, \vartheta_{\vec x},
$$
where $\vec x=\left( x_{i_1}(\varepsilon_1), \dots ,
x_{i_k}(\varepsilon_k)\right) $ runs over all
ordered tuples of all lengths
 $k\leq U$ consisting of representatives of (possibly different) levels
$<t$ such that
$$j+ i_1+\cdots + i_k\equiv 0\; ({\rm mod}\, n).$$

For brevity we also call elements of $L_j(t)$ \textit{centralizers of level $t$} and fix for them the notation
 $y_j(t)$ with letter ``$y$'' (under the Index Convention).

The number of representatives of all levels $<t$ is
 $(m,n,c)$-bounded and $|L_j: { \rm Ker}\,\vartheta_{\vec
x}| \leq m$ for all $\vec x$. Hence the intersection here is taken over
$ (m,n,c)$-boundedly many additive subgroups
of index $\leq m$ in $L_j$, and therefore $L_j(t)$ also has
$(m,n,c) $-bounded index in the additive subgroup $L_{j}$.

By definition a centralizer $y_j(t)$ of level
 $t$ has the following centralizer property with respect to
representatives of lower levels:
\begin{equation}\label{centralizer-property}
\left[ y_j(t), x_{i_1}(\varepsilon_1),\, \ldots ,\,
 x_{i_k}(\varepsilon_k) \right]=0,
\end{equation}
as soon as $j+ i_1+\cdots +i_k\equiv 0\, (\mbox{mod}\, n)$, \
$k\leq U$, and the elements $x_{i_s}(\varepsilon_s)$ are representatives
of any (possibly different) levels $\varepsilon_s<t$.

We now fix representatives of level $t$. For every pair $({\bf p},c)$
consisting of a pattern ${\bf p}$ of a simple commutator of weight $\leq U$
with non-zero indices of entries and zero sum of indices and a commutator $c\in L_0$
equal to the value of ${\bf p}$ on
$\varphi$-homogeneous elements \textit{of graded centralizers $L_{i}(t)$, $i\ne 0$, of level $t$},
we fix one such representation. The
elements occurring in this fixed representation of $c$ are called \textit{representatives of level $t$} and are denoted by $ x_{j}(t)$ (under the Index Convention).
Next, for every (already fixed) representative $x_j(t)$ of level $t$, we fix
the elements of the $H$-orbit
$$O(x_j(t))=\{x_j(t), x_j(t)^h,\,\ldots,
x_j(t)^{h^{q-1}} \},
$$
and call them also
 \textit{representatives of level~$t$}. These elements are denoted by
$x_{r^{s}j}(t):=x_j(t)^{h^{s}}$ under the Index Convention (since $L_j^{h^s}\leq L_{r^sj}$).
The number of patterns of weight $\leq U$ is $(n,c)$-bounded, $|L_0|=m$, and $|O(x_j(t))|=q$; hence
the total number of representatives of level~$t$ is $(m,n, c)$-bounded.
\end{definition3}

The construction of centralizers and representatives of levels $\leq T$
is complete. We now consider their properties.

It is clear from the construction of graded centralizers that
\begin{equation}\label{vkljuchenie}
L_j(k+1)\leq L_j(k)
\end{equation}
for all $j\ne 0$ and all $k=1,\ldots, T$.

The following lemma follows immediately from the definition of
representatives and from the
inclusions~\eqref{vkljuchenie}; we shall refer to this
lemma as the ``freezing'' procedure.

\begin{lemma}[freezing procedure]\label{zamorazhivanie}
Every simple commutator
$
[y_{j_1}(k_1),y_{j_2}(k_2),\dots,y_{j_w}(k_w) ]
$
of weight $w\leq U$ in centralizers of levels $k_1,k_2,\dots, k_w$ with zero modulo $n$ sum of indices
can be represented $($frozen\/$)$ as a commutator
$[x_{j_1}(s),x_{j_2}(s),\dots,x_{j_w}(s) ]$ of the same pattern in
representatives of any level $s$ satisfying $ 0\leq s\leq \min \{ k_1,k_2,\dots,k_w \}$.
\end{lemma}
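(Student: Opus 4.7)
The plan is simply to unwind the definitions and recognize the given commutator as a pair (pattern, value in $L_0$) of the kind for which a representation in representatives of level $s$ was fixed at the time the representatives of level $s$ were constructed.

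First, I would verify that each entry $y_{j_i}(k_i)$ actually lies in $L_{j_i}(s)$. By the chain of inclusions \eqref{vkljuchenie}, applied iteratively from level $k_i$ down to level $s$, we have $L_{j_i}(k_i)\leq L_{j_i}(s)$ whenever $s\leq k_i$; in particular this holds for the given $s\leq\min\{k_1,\dots,k_w\}$. Hence each $y_{j_i}(k_i)$ is a $\varphi$-homogeneous element of the graded centralizer $L_{j_i}(s)$, with $j_i\neq 0$ by construction of centralizers.

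Next, let $\mathbf{p}$ denote the pattern of the commutator $[y_{j_1}(k_1),\dots,y_{j_w}(k_w)]$, that is, its bracket structure together with the arrangement of indices $j_1,\dots,j_w$. Since $j_1+\dots+j_w\equiv 0\,({\rm mod}\,n)$ and all $j_i\neq 0$, and since $w\leq U$, the pattern $\mathbf{p}$ is a simple commutator pattern of weight $\leq U$ with non-zero indices of entries and zero sum of indices, which is exactly the kind of pattern considered in the definition of representatives of level $s$. Setting $c:=[y_{j_1}(k_1),\dots,y_{j_w}(k_w)]\in L_0$, the pair $(\mathbf{p},c)$ therefore has $c$ equal to the value of $\mathbf{p}$ on $\varphi$-homogeneous elements of the graded centralizers $L_{j_i}(s)$, $j_i\neq 0$, of level $s$.

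Finally, by the \boldmath\textit{Definition of level $t>0$}\unboldmath\ (applied with $t=s$; the case $s=0$ is the \textit{Definition of level $0$}), for every such pair $(\mathbf{p},c)$ one fixed representation of $c$ in representatives of level $s$ has been chosen. This is precisely a representation of the form $[x_{j_1}(s),x_{j_2}(s),\dots,x_{j_w}(s)]$ with the same pattern $\mathbf{p}$, the entries of which are the designated representatives of level $s$ (with the usual abuse of the Index Convention allowing different elements of $L_{j_i}(s)$ to be denoted by the same symbol $x_{j_i}(s)$). This is exactly the required frozen form, and no further argument is needed — the main (and only) point is that the weight bound $w\leq U$ in the hypothesis matches the weight bound used in the construction of representatives, so there is no obstacle; the lemma is essentially a restatement of the construction.
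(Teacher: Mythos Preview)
Your proof is correct and follows exactly the approach indicated in the paper, which simply states that the lemma ``follows immediately from the definition of representatives and from the inclusions~\eqref{vkljuchenie}''; you have merely spelled out the details the paper leaves implicit. No changes are needed.
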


\begin{definition} We define a \textit{quasirepresentative of weight $w\geq 1$ and level $k$} to be any
commutator of weight $w$ which involves exactly one
representative $x_i(k)$
 of level $k$ and $w-1$ representatives $x_s(\varepsilon_s)$ of any lower levels $\varepsilon_s<k$.
 Quasirepresentatives of level $k$ are denoted by
$\hat{x}_{j}(k)\in L_j$ under the Index Convention.
Quasirepresentatives of weight $1$ are precisely representatives.
\end{definition}

\begin{lemma}\label{invariance} If $y_j(t)\in L_j(t)$ is a centralizer of level
$t$, then $(y_j(t))^h$ is a centralizer of level $t$. If
$\hat{x}_j(t)$ is a quasirepresentative of level $t$, then
$(\hat{x}_j(t))^h$ is a quasirepresentative of level $t$ and of the same weight as $\hat{x}_j(t)$.
\end{lemma}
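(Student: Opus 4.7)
The plan is to reduce both assertions to two structural features of the construction: (i) by design, at every level $\varepsilon\le T$ the full set of representatives is closed under $H$, because with each representative $x_j(\varepsilon)$ we adjoined the entire orbit $O(x_j(\varepsilon))$ to the representatives of level $\varepsilon$; and (ii) the action of $h$ permutes the $\varphi$-components by $L_i^h=L_{ri}$, so multiplication by $r^{-1}$ is a well-defined bijection of the index set ${\Bbb Z}/n{\Bbb Z}$.

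For the first assertion, I would take an arbitrary tuple $\vec x'=(x_{i_1'}(\varepsilon_1),\dots,x_{i_k'}(\varepsilon_k))$ of representatives of levels $\varepsilon_s<t$ with $k\le U$ and $rj+i_1'+\cdots+i_k'\equiv 0\pmod{n}$ appearing in the definition of $L_{rj}(t)$. By (i), each $x_{i_s}(\varepsilon_s):=x_{i_s'}(\varepsilon_s)^{h^{-1}}$ is itself a representative of level $\varepsilon_s$ lying in $L_{r^{-1}i_s'}$, and we set $i_s=r^{-1}i_s'$. The new tuple satisfies $j+i_1+\cdots+i_k=r^{-1}(rj+i_1'+\cdots+i_k')\equiv 0\pmod{n}$, so the level-$t$ centralizer property of $y_j(t)\in L_j(t)$ gives
$$[y_j(t),x_{i_1}(\varepsilon_1),\dots,x_{i_k}(\varepsilon_k)]=0.$$
Applying the automorphism $h$ to this identity yields $[y_j(t)^h,x_{i_1'}(\varepsilon_1),\dots,x_{i_k'}(\varepsilon_k)]=0$. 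Since $y_j(t)^h\in L_j^h=L_{rj}$ and $\vec x'$ was arbitrary, $y_j(t)^h$ lies in every kernel ${\rm Ker}\,\vartheta_{\vec x'}$ defining $L_{rj}(t)$, so $y_j(t)^h\in L_{rj}(t)$ is a centralizer of level $t$.

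For the second assertion, a quasirepresentative $\hat x_j(t)$ of level $t$ and weight $w$ is, by definition, a commutator in some bracket pattern involving exactly one representative $x_{i_0}(t)$ of level $t$ and $w-1$ representatives of strictly lower levels. Its $h$-image $\hat x_j(t)^h$ is the commutator in the same bracket pattern whose entries are the $h$-images of the original entries, and by (i) each of these is again a representative of the same level. Hence $\hat x_j(t)^h$ is a commutator of weight $w$ involving exactly one representative of level $t$ and $w-1$ of lower levels, i.e., a quasirepresentative of level $t$ of the same weight $w$; it lies in $L_j^h=L_{rj}$, consistently with the Index Convention.

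No genuine obstacle arises: the content of the lemma is simply that the construction of graded centralizers and (quasi)representatives is $H$-equivariant by design. The only step worth singling out is the change of variables $i_s'\leftrightarrow r^{-1}i_s'$, which preserves the sum-of-indices defining condition precisely because $r$ is a unit modulo $n$.
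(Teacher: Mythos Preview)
Your proof is correct and follows essentially the same approach as the paper: both arguments exploit that the set of representatives at each level is $H$-invariant by construction and that multiplication by $r$ is a bijection of ${\Bbb Z}/n{\Bbb Z}$, so the tuples defining $L_{rj}(t)$ are exactly the $h$-images of the tuples defining $L_j(t)$. The only cosmetic difference is direction: the paper applies $h$ to all known vanishing relations and observes that the resulting tuples exhaust those needed for $L_{rj}(t)$, whereas you start from an arbitrary target tuple, pull it back by $h^{-1}$, and then push forward; these are the same argument.
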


\begin{proof}
By hypothesis,
$$ [y_j(t), x_{i_1}(\varepsilon_1), \dots ,
 x_{i_k}(\varepsilon_k)]=0,
$$
whenever $\e _i<t$ for all $i$, $j+i_1+\cdots+ i_k \equiv 0 \; ({\rm mod}\, n)$, and $k\leq U$.
By applying the automorphism $h$ we obtain that
$$[y_{j}(t)^h, x_{ri_1}(\varepsilon_1), \dots ,
 x_{ri_k}(\varepsilon_k)]=0
$$ with $y_{j}(t)^h\in L_{rj}$. Since the set of representatives
is $H$-invariant by construction, the tuples
$x_{ri_1}(\varepsilon_1), \dots , x_{ri_k}(\varepsilon_k)$ run
over all tuples of representatives of levels $<t$ with index tuples
such that $rj+ ri_1 +\dots + ri_k\equiv 0 \; ({\rm mod}\, n)$. By
definition this means that $y_{j}(t)^h\in L_{rj}(t)$.

Now let $\hat{x}_j(t)$ be a quasirepresentative of weight $k$ of level $t$.
By definition this is a commutator
involving exactly one representative $x_{i_1}(t)$ of level $t$ and some representatives
$x_{i_2}(\varepsilon_2), \dots , x_{i_k}(\varepsilon_k)$ of smaller levels
$\varepsilon_s<t$.
By construction, the elements
$(x_{i_s}(\varepsilon_s))^{h}=x_{ri_s}(\varepsilon_s)$ are also representatives of the same
levels~$\varepsilon_s$. Then the image $(\hat{x}_j(t))^h$ is also
a commutator involving exactly one
representative $x_{ri_1}(t)$ of level $t$ and representatives
$x_{ri_2}(\varepsilon_2), \dots , x_{ri_k}(\varepsilon_k)$ of smaller levels
$\varepsilon_s<t$.
Therefore $(\hat{x}_j(t))^h$ is
 also a quasirepresentative of level $t$ of the same weight.
\end{proof}

When using Lemma~\ref{invariance} we denote the elements
$y_{j}(t)^{h^s}$ by $y_{r^sj}(t)$, and the elements $\hat{x}_j(t)^{h^s}$
by $\hat{x}_{r^sj}(t)$.

 Lemma~\ref{invariance} also implies that all representatives of level $t$,
 elements $x_j(t), x_j(t)^h,\ldots, x_j(t)^{h^{q-1}}$, are centralizers of
 level $t$.

\begin{lemma}\label{quasirepresentatives} Any commutator involving exactly one
centralizer ${y}_{i}(t)$ (or quasirepresentative ${\hat x}_{i}(t)$) of level
 $t$ and quasirepresentatives of levels $< t$ is equal to
 $0$ if the sum of indices of its entries is equal to $0$ and the sum
of their weights is at most~$U+1$.
 \end{lemma}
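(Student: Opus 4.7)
The plan is to fully expand the given commutator as a linear combination of simple commutators in actual representatives, each beginning with a level-$t$ element, and then invoke the centralizer property~\eqref{centralizer-property}. First, using the anticommutativity and Jacobi identities (as recalled in the preliminaries of this section), I would rewrite the outer commutator as a linear combination of simple commutators in the distinguished entry (either $y_i(t)$ or $\hat{x}_i(t)$) and the various quasirepresentatives of levels $<t$, each beginning with this distinguished entry. In the case of $\hat{x}_i(t)$, I would further expand it according to its definition as a simple commutator with one representative $x_{j'}(t)$ of level $t$ followed by representatives of lower levels; the resulting outer commutator then begins with $x_{j'}(t)$. In either case --- writing $z=y_i(t)$ or $z=x_{j'}(t)$ --- I now have a linear combination of commutators of the form $[z,B_1,\ldots,B_r]$, where each $B_s$ is a simple commutator in representatives of levels strictly below $t$.

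Applying the Jacobi identity repeatedly to resolve the inner brackets, and noting that each such application preserves the property of starting with $z$, yields a linear combination of simple commutators
\[
[z,\; x_{j_1}(\varepsilon_1),\,\ldots,\, x_{j_k}(\varepsilon_k)]
\]
with $\varepsilon_s<t$ for every $s$. Throughout these manipulations the weight is preserved, so the bound $\leq U+1$ on the original total weight forces $k\leq U$. The multiset of underlying entries is preserved up to sign, so the sum of indices modulo $n$ is preserved as well and equals $0$ by hypothesis; hence $j+j_1+\cdots+j_k\equiv 0 \pmod{n}$, where $j$ is the index of $z$.

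Finally, since $z$ is either the centralizer $y_i(t)\in L_i(t)$ or a representative $x_{j'}(t)$ of level $t$ (which by construction lies in $L_{j'}(t)$), in both cases $z$ is a centralizer of level $t$. The centralizer property~\eqref{centralizer-property} then applies to each simple commutator in the linear combination and shows that it vanishes, so the original commutator equals $0$. The argument presents no substantial obstacle; it is essentially an exercise in bookkeeping, the only delicate point being to verify that after all Jacobi expansions the three hypotheses of~\eqref{centralizer-property} --- weight bound $k\leq U$, all secondary levels strictly below $t$, and zero index sum --- are indeed preserved. The weight bound $U+1$ in the statement of the lemma is precisely what guarantees the first of these.
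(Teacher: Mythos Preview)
Your proof is correct and follows essentially the same approach as the paper's: expand via Jacobi and anticommutativity into simple commutators beginning with a level-$t$ centralizer (noting that a representative $x_{j'}(t)$ is itself a centralizer of level $t$), then invoke~\eqref{centralizer-property}. One minor imprecision is that the definition of a quasirepresentative does not require it to be a \emph{simple} commutator starting with the highest-level representative, but your Jacobi expansion handles this anyway, so the argument goes through unchanged.
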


\begin{proof} Based on the definitions, by the Jacobi and anticommutativity identities
we can represent this
commutator as a linear combination of simple commutators of weight
 $\leq U+1$ beginning with the centralizer ${y}_{i}(t)$ (or a centralizer ${y}_{j}(t)$ involved in ${\hat x}_{i}(t)$)
of level $t$ and involving in addition only some representatives of levels
$<t$. Since the sum of indices of all these elements is also equal to
$0$, all these commutators are equal to $0$ by~\eqref{centralizer-property}.
 \end{proof}

\paragraph{Completion of the proof of the Lie ring theorem.}\label{section-main-teorem}
Recall that $T$ is the fixed notation for the highest level, which
is a $(c,q)$-bounded number. We constructed above the graded
centralizers $L_j(T)$. We now set $$ Z=\left<
L_1(T),\,L_2(T),\ldots ,L_{n-1}(T)\right> .$$
Since $|L_j:L_j(T)|$ is $(m,n,c)$-bounded for $j\ne 0$ and $|L_0|=m$,
it follows that $|L:Z|$ is $(m,n,c)$-bounded. We claim that
the subring $Z$ is nilpotent of $(c,q)$-bounded class
and therefore is a required one.

Since $Z$ is generated by the $L_{j}(T)$, $j\ne 0$,
it is sufficient to prove that every simple commutator of weight $U$ of the form
\begin{equation}\label{9}
[y_{i_1}(T),\ldots, y_{i_U}(T)],
\end{equation}
where $y_{i_j}(T)\in L_{i_j}(T)$,
 is equal to zero. Recall that the $H$-orbits
 $O(y_{i_j}(T))=\{y_{i_j}(T)^{h^s}=y_{r^si_j}(T)\mid s=0,1,\ldots, q-1\}$
 consist of centralizers of level $T$ by Lemma~\ref{invariance}. By
Proposition~\ref{kh-ma-shu-transformation}, the
commutator~\eqref{9} can be represented as a linear combination of
commutators in elements of $Y=\bigcup_{j=1}^U O(y_{i_j}(T))$ each of which either has a
subcommutator of the form~\eqref{f1}
in which there are $T$ distinct initial segments in $L_0$, or has a
subcommutator of the form~\eqref{f2} in which there are $T-1$ occurrences of
elements from $L_0$. It is sufficient to prove that such subcommutators of both types
are equal to zero.

We firstly consider a commutator of the form~\eqref{f2}
\begin{equation}\label{main-f-2}
[y_{k_0}(T), c^1,\ldots, c^{T-1}],
\end{equation}
where $y_{k_0}(T)\in Y$ and every $c^i\in L_0$ (with numbering upper indices $i=1,\ldots,T-1$)
has the form $[y_{k_1}(T),\ldots, y_{k_i}(T)]$ with $y_{k_j}(T)\in Y$ and $k_1+\cdots+k_i\equiv 0 \; ({\rm mod}\, n)$.

Using Lemma~\ref{zamorazhivanie} we ``freeze'' every $c^k$ as a
commutator of the same pattern
in representatives of level $k$.
Then by expanding the inner brackets by the Jacobi identity
$[a,[b,c]]=[a,b,c]-[a,c,b]$ we represent
the commutator~\eqref{main-f-2} as a linear combination
of commutators of the form
\begin{equation}\label{main-f-22}
[y_{k_0}(T),\,\,x_{j_1}(1),\ldots, x_{j_k}(1),\,x_{j_{k+1}}(2),\ldots,
x_{j_s}(2), \ldots, \, \, \, \,x_{j_{l+1}}(T-1),\ldots,
x_{j_{u}}(T-1)\,].
\end{equation}
We subject the commutator~\eqref{main-f-22} to a certain collecting process, aiming at
a linear combination of commutators with initial segments
consisting of (quasi)representatives of different levels $1,2,\ldots,T-1 $ and
the element $y_{k_0}(T)$. For that, by the formula
$[a,b,c]=[a,c,b]+[a,[b,c]]$,
we begin moving the element $x_{j_{k+1}}(2)$ in~\eqref{main-f-22} (the first from the left element
of level 2) to the left, in order to place it right after the element
$x_{j_{1}}(1)$. These transformations give rise to
additional summands: say, at first step we obtain
$$[y_{k_0}(T),\,\,\ldots,x_{j_{k+1}}(2),
x_{j_k}(1),\,\ldots,\,]+[u_{k_0}(T),\,\,\ldots,
[x_{j_k}(1),\,x_{j_{k+1}}(2)],\ldots].
$$
In the first summand
we continue transferring $x_{j_{k+1}}(2)$ to the left, over all
representatives of level 1. In the second summand the subcommutator
$[x_{j_k}(1),\,x_{j_{k+1}}(2)]$ is a quasirepresentative, which we denote by
$\hat{x}_{j_k+j_{k+1}}(2)$
and start moving this quasirepresentative to the left over all representatives of level 1.
Since we are transferring a (quasi)representative of level $2$ over
representatives of level $1$, in additional summands every time there appear subcommutators that
are quasirepresentatives of level $2$, which assume the role of the element being transferred.

\begin{remark}\label{rem}
Here and in subsequent similar situations, it may happen that the sum of indices of a new
subcommutator is zero, so it cannot be regarded as a quasirepresentative --- but then such a
subcommutator is equal to 0 by Lemma~\ref{quasirepresentatives}.
\end{remark}

As a result we obtain a linear combination of commutators of the form
$$[[y_{k_0}(T),x(1),\hat{x}(2)], x(1),\ldots,x(1),\,\,x(2),\ldots,
x(2),\ldots, x(T-1),\ldots,x(T-1)]
$$
with collected initial
segment $[y_{k_0}(T),x(1),\hat{x}(2)]$. (For simplicity we omitted
indices in the formula.) Next we begin moving to the left
the first from the left representative of level 3 in order to place it in the fourth place.
This element is also transferred only
over representatives of lower levels, and the new
subcommutators in additional summands are quasirepresentatives of level 3. These
quasirepresentatives of level 3 assume the role of the element being transferred, and so on.
In the end we obtain
a linear combination of commutators with initial segments of the form
\begin{equation}
\label{main-f-23}[y_{k_0}(T),\, \hat{x}_{k_1}(1),
\hat{x}_{k_2}(2),\ldots,\hat{x}_{k_{T-1}}(T-1) ].
\end{equation}

By Proposition~\ref{combinatorial} the commutator~\eqref{main-f-23} of weight
$T$ is equal to a linear combination of $\varphi$-homogeneous commutators
of the same weight $T$
in elements of the $H$-orbits of the elements
$y_{k_0}(T),\, \hat{x}_{k_1}(1),
\hat{x}_{k_2}(2),\ldots,\hat{x}_{k_{T-1}}(T-1)$
each involving exactly the same number of elements of each $H$-orbit of these elements as \eqref{main-f-23}
and having a subcommutator with zero sum of indices modulo $n$.
By Lemma~\ref{invariance} every element $(\hat{x}_
{k_i}(i))^{h^{l}}=\hat{x}_{r^lk_i}(i)$ is a quasirepresentative
 of level $i$ and any $(y_{k_0}(T))^{h^l}$
is a centralizer of the form $y_{r^lk_0}(T)$ of level $T$. Since
every level appears only once in \eqref{main-f-23}, those subcommutators with zero sum of indices are equal to 0 by
Lemma~\ref{quasirepresentatives}. Hence  every commutator of the linear combination is equal to 0.

We now consider a commutator of the form~\eqref{f1}
\begin{equation}\label{main-f-1}
[y_{k_1}(T),\ldots, y_{k_s}(T)],
\end{equation}
where $y_{k_j}(T)\in Y$ and there are $T$ distinct initial segments with
zero sum of indices: $k_1+\cdots+k_{r_i}\equiv 0\,({\rm mod}\, n)$ for $1<r_1<r_2<\cdots<r_{T}=s$.
The commutator~\eqref{main-f-1} in elements of the centralizers $L_i(T)$ of level $T$ belongs to $L_0$;
therefore by Lemma~\ref{zamorazhivanie} it can be ``frozen'' in
level $T$, that is, represented as the value of the same pattern on representatives of level $T$:
\begin{equation}\label{main-f-11}
[x_{k_1}(T),\ldots, x_{k_s}(T)].
\end{equation}
Next, the initial segment of \eqref{main-f-11} of length
$r_{T-1}$ also belongs to $L_0$ and is a commutator in centralizers of level $T-1$, since
$L_i(T-1)\leq L_i(T)$. Therefore by Lemma~\ref{zamorazhivanie} it can be
``frozen'' in level $T-1$, and so on. As a result the commutator \eqref{main-f-1} is equal to a
commutator of the form
\begin{equation}\label{main-f-12}
[x(1),\ldots, x(1),x(2),\ldots, x(2),\ldots, \ldots, x(T),\ldots,
x(T)].
\end{equation}
(We omitted here indices for
simplicity.) We subject the commutator~\eqref{main-f-12} to exactly the same
transformations as the commutator~\eqref{main-f-22}. First we transfer the left-most element
of level $2$ to the left to the second place,
then the left-most element of level 3 to the third place, and so on. In
additional summands the emerging quasirepresentatives
$\hat{x}(i)$ (see  Remark~\ref{rem})
assume the role of the element being transferred and are also
transferred to the left to the $i$th place. In the end we obtain a linear
combination of commutators with initial segments of the form
\begin{equation}\label{main-f-13}[\hat{x}_{k_1}(1),
\hat{x}_{k_2}(2),\ldots,\hat{x}_{k_{T}}(T) ].
\end{equation}
By Proposition~\ref{combinatorial} the commutator~\eqref{main-f-13} of weight
$T$ is equal to a linear combination of $\varphi$-homogeneous commutators
of the same weight $T$
in elements of the $H$-orbits of the elements
$\hat{x}_{k_1}(1), \hat{x}_{k_2}(2),\ldots,\hat{x}_{k_{T}}(T)$
each involving exactly the same number of elements of each $H$-orbit of these elements as \eqref{main-f-13}
and having a subcommutator with zero sum of indices modulo $n$.
By Lemma~\ref{invariance} every element $(\hat{x}_
{k_i}(i))^{h^{l}}$ is a quasirepresentative
$\hat{x}_{r^lk_i}(i)$ of level $i$.
Since every level appears only once in \eqref{main-f-13} and
there is an initial segment with zero sum of indices, those subcommutators with zero sum of indices  are
equal to 0 by Lemma~\ref{quasirepresentatives}.
Hence  every commutator of the linear combination is equal to 0.
\end{proof}

\begin{proof}[Proof of Corollary~\ref{c-l}]
Let $L$ be a Lie algebra over a field of characteristic $p$. Let $F=\langle \psi\rangle \times \langle
\chi\rangle$, where  $ \langle \psi \rangle$ is  the Sylow $p$-subgroup and
$\langle \chi \rangle$ the Hall $p'$-subgroup. Consider the
fixed-point subalgebra  $A=C_L(\chi )$. It is
$\psi$-invariant and $ C_A(\psi )=C_L(\varphi )$, so that
${\rm dim\,}C_A(\psi )\leq m$. Since $\psi$ has order $p^k$ and the characteristic is~$p$, this implies that
${\rm dim\,}A\leq mp^k$ by a well-known lemma following from the Jordan normal form of $\psi$, see for example,
 \cite[1.7.4]{kh4}.
Thus, $L$ admits the Frobenius group of automorphisms
$\langle\chi\rangle H$ with $(m,n)$-bounded ${\rm
dim\,}C_L(\chi )$, so we can assume that $p$ does not divide $n$.
After that we can repeat the arguments of the proof of
Theorem~\ref{t-l} with obvious modifications: codimensions instead
of indices, etc. Most significant modification is in the
definition of representatives, which now have to be fixed bases of
subspaces generated by values of patterns, rather than all values
of these patterns. Actually almost the whole proof in  \cite{mak-khu13}
can be repeated with the improvement that we made in the present paper in the proof of
Proposition~\ref{combinatorial}.
\end{proof}

\section{Proof of Theorem~\ref{t-g}}

Recall that $G$ is a finite group admitting a Frobenius group of
automorphisms $FH$ of coprime order with cyclic kernel $F$ of order $n$ and
complement $H$ of order $q$ such that the fixed-point subgroup $C_G(H)$ of the
complement is nilpotent of class $c$. Let $m=|C_G(F)|$; we need to prove that $G$ has a nilpotent
characteristic subgroup of $(m, n,c)$-bounded index and of $(c, q)$-bounded nilpotency class.

By a result of B.~Bruno and F.~Napolitani \mbox{\cite[Lemma~3]{bru-nap04}} if a group has a subgroup of finite index $k$
that is nilpotent of class $l$, then it also has a characteristic subgroup of finite $(k,l)$-bounded index
that is nilpotent of class $\leq l$. Therefore in the proof of Theorem~\ref{t-g} we only need a subgroup
of $(m, n,c)$-bounded index and of $(c, q)$-bounded nilpotency class.

By Theorem~\ref{t-n} %!!
the group  $G$ has a nilpotent subgroup of
$(m, n)$-bounded index. Therefore henceforth we assume that $G$ is nilpotent.

The proof of Theorem \ref{t-g} for nilpotent groups is based on
the ideas developed in \cite{kh2} for almost fixed-point-free automorphisms
of prime order. A modification of the
method of graded centralizers is employed, which was used in \S\,\ref{s-l}
for Lie rings. But now construction of fixed elements (representatives) and generalized
centralizers $A(s)$ of levels $s\leq 2T-2$ is conducted in the group $G$:
$$G=A(0)\geq A(1)\geq \dots \geq A(2T-2),$$
where $T=T(c,q)=f(c,q)+1$ and $f$ is the function in
Proposition~\ref{kh-ma-shu10-1}. The subgroups $A(s)$ will have
$(m,n,c)$-bounded indices in $G$ and the images of elements of $A(s)$
in the associated Lie ring $L(G)$ extended by a
primitive $n$th root of unity will have centralizer properties in this Lie ring
with respect to representatives of lower levels. Direct application of Theorem~\ref{t-l} to $L(G)$
does not give a required result for the group $G$, since there is no good correspondence between
subgroups of $G$ and subrings of $L(G)$. As in \cite{kh2} we overcome this difficulty by
proving that, in a certain critical situation, the group $G$ itself is nilpotent of $(c, q)$-bounded class,
the advantage being that the nilpotency class of $G$ is equal to that of $L(G)$.
This is not true in general, but can be achieved
by using induction on a certain complex parameter. This parameter controls
the possibility of replacing commutators in elements of the Lie ring by
commutators in representatives of higher levels. If the parameter becomes smaller
for some of the subgroups $A(i)$ than for the group $G$, then by the induction hypothesis the subgroup $A(i)$,
and therefore also the group $G$ itself, contains a required subgroup
of $(m,n,c)$-bounded index and of $(c,q)$-bounded nilpotency class.
If, however, this parameter does not diminish up to level $2T-2$, then we prove that
 the whole group $G$ is nilpotent of class $< V(T, 2(T-1), c,q )$, where $V$ is the
function in Proposition~\ref{kh-ma-shu-transformation}.

\paragraph{Generalized centralizers and representatives in the group.}
Let $L(G)=\bigoplus_i \g _i(G)/\g _{i+1}(G)$ be the associated Lie ring of the group $G$, where $\g _i(G)$ are
terms of the lower central series. The summand $\g _i(G)/\g _{i+1}(G)$ is the homogeneous component of weight $i$
of the Lie ring $L(G)$ with respect to the generating set $G/\g _{2}(G)$. Since
$(|G|, |FH|)=1$, we have $|C_{L(G)}(F)|=|C_G(F)|=m$ and
$C_{L(G)}(H)=\bigoplus_i
C_{\gamma_i(G)}(H)\gamma_{i+1}(G)/\gamma_{i+1}(G)$ for the induced group $FH$ of automorphisms of $L(G)$.
Therefore it is easy to see that $C_{L(G)}(H)$ is also nilpotent of class at most $c$.

Recall that $F=\langle\varphi\rangle$ is cyclic of order $n$. Let
$L=L(G)\otimes_{\Bbb Z}{\Bbb Z}[\omega]$, where $\omega$ is a
primitive $n$th root of unity. Recall that as a $\Z$-module, $\Z
[\w ]=\bigoplus _{i=0}^{E(n)-1} \w ^i \Z$, where $E(n)$ is the
Euler function. Hence,
 \be \label{euler0} L=\bigoplus
_{i=0}^{E(n)-1} L(G)\otimes \w ^i \Z.
 \ee
 In particular,
$C_L(\varphi )= \bigoplus _{i=0}^{E(n)-1} C_{L(G)}(\varphi
)\otimes \w ^i \Z$, so that $|C_L(\varphi )|=|C_{L(G)}(\varphi
)|^{E(n)}=m^{E(n)}$ is an $(m,n)$-bounded number. Since $(|G|,
n)=1$, we have $L=L_0\oplus L_1\oplus \cdots \oplus L_{n-1}$,
where $L_i=\{x\in L\mid x^{\varphi}=\w ^ix\}$ are the $\varphi$-components of $L$, as in
\S\,\ref{s-l}, and $C_L(\varphi )=L_0$. We consider $L(G)$ to be
naturally embedded in $L$ as $L(G)\otimes 1$. Since $(|G|,n)=1$,
we can assume that the ground rings of $L(G)$ and $L$ contain $1/n$.

\begin{definition} Let $x \in G$, and let $\bar{x}$ be the image of $x$ in $G/\gamma_2(G)$. We
define the $\varphi$-\textit{terms} of the element $x$
in $L$ by the formula $x_k=\frac{1}{n}\sum_{s=0}^{n-1}\omega^{-ks}\bar x^{\varphi ^s}$,
$k=0,1,\dots , n-1$. Then $x_k\in L_k$ and $\bar x=x_0+x_1+\cdots+x_{n-1}$.

We re-define \textit{$\varphi$-homogeneous commutators} as commutators in $\varphi$-terms of elements.
\end{definition}

Note that the $\varphi$-terms of elements are calculated in
the homogeneous component of weight 1 of the ring $L$ (in
particular, for elements $\gamma_2(G)$ they are all equal to $0$).
Note also that now $\varphi$-homogeneous commutators have a more narrow  meaning than in
\S\,\ref{s-l} (where they were commutators in any elements of the grading $\varphi$-components $L_i$).

As in \S\,\ref{s-l}, the group $H=\langle h\rangle$ of order $q$
permutes the $\varphi$-components $L_i$ by the rule
$L_i^h = L_{ri}$ for $i\in \Bbb Z/n\Bbb Z$. Elementary calculations show that
the action of $H$ preserves the $\varphi$-terms of elements.

\begin{lemma}\label{invariance-group} Let $x_i$, $i=0,\ldots , n-1$, be the
$\varphi$-terms of an element $x\in G$. Then $(x_{j})^h=(x^h)_{jr}$
and $(x^h)_{i}=(x_{ir^{-1}})^h$,
where $(x^h)_{k}\in L_{k}$ are the
$\varphi$-terms of $x^h\in G$.\qed
\end{lemma}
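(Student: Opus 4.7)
The plan is to establish both identities by a direct computation from the defining formula
$x_k=\tfrac{1}{n}\sum_{s=0}^{n-1}\omega^{-ks}\bar x^{\varphi^s}$, using the commutation relation between $h$ and $\varphi$ coming from the structure of the Frobenius group $FH$. Only the first identity needs to be proved; the second follows by specialising $j=ir^{-1}$.

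First I would extract the needed commutation relation. From $\varphi^{h^{-1}}=\varphi^r$, that is, $h\varphi h^{-1}=\varphi^r$, it follows that $h^{-1}\varphi h=\varphi^{r^{-1}}$, where $r^{-1}$ denotes the inverse of $r$ modulo $n$ (which exists because $r$ is a primitive $q$th root of unity in $\Z/n\Z$, hence a unit). Consequently $\varphi^{s}h=h\varphi^{sr^{-1}}$ for every $s\in\Z/n\Z$. I would also note that the canonical projection $G\to G/\gamma_2(G)$ is $H$-equivariant, so $\overline{x^h}=\bar x^{h}$.

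Next, I would apply the automorphism $h$ to the defining sum for $x_j$:
\[
(x_j)^h=\frac{1}{n}\sum_{s=0}^{n-1}\omega^{-js}\bar x^{\varphi^{s}h}=\frac{1}{n}\sum_{s=0}^{n-1}\omega^{-js}\bar x^{h\varphi^{sr^{-1}}}.
\]
Reindexing the sum by $u=sr^{-1}$ (a bijection of $\Z/n\Z$ onto itself), and pulling the action of $h$ inside using $\bar x^{h\varphi^u}=(\bar x^{h})^{\varphi^u}=\overline{x^h}^{\varphi^u}$, I obtain
\[
(x_j)^h=\frac{1}{n}\sum_{u=0}^{n-1}\omega^{-(jr)u}\overline{x^h}^{\varphi^u},
\]
which is exactly the defining formula for the $\varphi$-term $(x^h)_{jr}$. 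This gives the first identity, and setting $j=ir^{-1}$ yields the second.

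There is essentially no obstacle: the only point requiring care is getting the direction of the commutation correct ($\varphi^s h=h\varphi^{sr^{-1}}$ rather than $h\varphi^{sr}$), and noting that the reindexing is legitimate because $r$ is a unit modulo $n$. The $H$-equivariance of the projection to $G/\gamma_2(G)$, which is what allows us to identify $\bar x^h$ with $\overline{x^h}$ inside the homogeneous component of weight $1$ of $L$, is automatic since $\gamma_2(G)$ is characteristic.
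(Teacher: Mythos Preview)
Your computation is correct and is precisely the ``elementary calculation'' the paper alludes to; the paper itself gives no proof for this lemma (it is stated with a \qed), so there is nothing to compare against beyond noting that your direct verification from the defining formula for the $\varphi$-terms is the intended argument.
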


Here the construction of generalized centralizers $A(s)$ and fixed
representatives is somewhat different from how this was done in \S\,\ref{s-l}.
Complications  arise from the fact that the centralizer property
is defined in the Lie ring $L$, while the $A(s)$ are subgroups of $G$.
The following lemma interprets this property in the group $G$.

\begin{lemma}\label{svjaz}
Suppose that $j+i_1+i_2+\cdots+i_k\equiv 0\, ({\rm mod}\, n)$
for some $j,\, i_1,\, \ldots,\, i_k \in {\Bbb Z}/n{\Bbb Z}$.
Then for the $\varphi$-terms $u_j, x_{i_1},\,
y_{i_2},\, \ldots,\, z_{i_k}$ of $k+1$ elements $u,\, x,\,y,
\ldots, z \in G$ to satisfy the equation
$[u_j,\, x_{i_1},\, y_{i_2},\, \ldots,\, z_{i_k}]=0$ in the Lie ring $L$, it is sufficient
that the congruences $\prod_{t=0}^{n-1}\left[ u,\, x^{\varphi ^{a_1}},\,
y^{\varphi ^{a_2}},\, \ldots,\, z^{\varphi ^{a_k}}\right]
^{\varphi ^t}\equiv 1\, ({\rm mod}\, \gamma_{k+2}(G))$ hold in the group $G$
for all ordered tuples $a_1,\, a_2,\,
\ldots,\, a_k$ of elements of ${\Bbb Z}/n{\Bbb Z}$.
\end{lemma}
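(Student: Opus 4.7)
The plan is to expand both sides via multilinearity and match them through the standard identification $\gamma_{k+1}(G)/\gamma_{k+2}(G) \hookrightarrow L(G) \hookrightarrow L$. First I substitute the defining formula $u_j = \frac{1}{n}\sum_{s=0}^{n-1}\omega^{-js}\bar u^{\varphi^s}$ (and the analogous formulas for $x_{i_1},\dots,z_{i_k}$) into the Lie commutator and use multilinearity to obtain
$$
[u_j,x_{i_1},\dots,z_{i_k}] \;=\; \frac{1}{n^{k+1}}\sum_{s_0,s_1,\dots,s_k}\omega^{-js_0-i_1s_1-\cdots-i_ks_k}\,[\bar u^{\varphi^{s_0}},\bar x^{\varphi^{s_1}},\dots,\bar z^{\varphi^{s_k}}].
$$

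Next, I reparameterise by setting $a_\ell = s_\ell - s_0$ for $\ell = 1,\dots,k$. The exponent of $\omega$ becomes $-(j+i_1+\cdots+i_k)s_0 - i_1a_1-\cdots-i_ka_k$; the hypothesis $j+i_1+\cdots+i_k\equiv 0\,({\rm mod}\,n)$ annihilates the $s_0$-term, so the coefficient depends only on the $a_\ell$'s. Since $\varphi$ acts as an automorphism of $L$, I can also pull $\varphi^{s_0}$ outside the commutator, obtaining
$$
[u_j,x_{i_1},\dots,z_{i_k}] \;=\; \frac{1}{n^{k+1}}\sum_{a_1,\dots,a_k}\omega^{-i_1a_1-\cdots-i_ka_k}\sum_{s_0=0}^{n-1}[\bar u,\bar x^{\varphi^{a_1}},\dots,\bar z^{\varphi^{a_k}}]^{\varphi^{s_0}}.
$$

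The concluding step is to recognise the inner sum. Under the standard correspondence, a group commutator $[u,x^{\varphi^{a_1}},\dots,z^{\varphi^{a_k}}]$ lies in $\gamma_{k+1}(G)$, and its image in $\gamma_{k+1}(G)/\gamma_{k+2}(G) \subseteq L(G) \subseteq L$ is precisely the Lie commutator $[\bar u,\bar x^{\varphi^{a_1}},\dots,\bar z^{\varphi^{a_k}}]$. Because this image map is additive on $\gamma_{k+1}(G)/\gamma_{k+2}(G)$ and $\varphi$ respects the lower central series, the group product $\prod_{t=0}^{n-1}[u,x^{\varphi^{a_1}},\dots,z^{\varphi^{a_k}}]^{\varphi^t}$ maps to $\sum_{s_0=0}^{n-1}[\bar u,\bar x^{\varphi^{a_1}},\dots,\bar z^{\varphi^{a_k}}]^{\varphi^{s_0}}$. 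If the hypothesised congruences hold, each such product lies in $\gamma_{k+2}(G)$, so every inner sum vanishes in $L$, and therefore so does $[u_j,x_{i_1},\dots,z_{i_k}]$.

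The main point requiring care is the dictionary between group commutators and Lie commutators: one must work in the fixed homogeneous component $\gamma_{k+1}(G)/\gamma_{k+2}(G)$ so that additivity applies, and verify that the reparameterisation and the pull-out of $\varphi^{s_0}$ are compatible with this identification. Both are automatic once one notes that $\varphi$ normalises every term of the lower central series and that the correspondence between group and Lie commutators is additive on a single homogeneous component. Everything else is routine manipulation with roots of unity and multilinear expansion.
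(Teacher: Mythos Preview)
Your proof is correct and follows essentially the same route as the paper's: substitute the defining sums for the $\varphi$-terms, expand by multilinearity, use the hypothesis $j+i_1+\cdots+i_k\equiv 0$ to make the coefficient independent of the diagonal shift, and recognise the resulting inner sums as the Lie-ring images of the group products in the hypothesis. The only cosmetic difference is that the paper first groups terms by the residue $l$ of the exponent of $\omega$ and then observes the splitting into $\varphi$-orbit sums, whereas you reparameterise directly via $a_\ell = s_\ell - s_0$; the content is the same.
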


\begin{proof}
We substitute the expressions of the $\varphi$-terms:
\begin{align*}
[u_j,\, x_{i_1},\, y_{i_2},\, \ldots,\, z_{i_k}]
 &=\Big[ \frac{1}{n}\sum_{s=0}^{n-1}\omega^{-js}\bar u^{\varphi^s},\,\,
 \frac{1}{n}\sum_{s=0}^{n-1}\omega^{-i_1s} \bar{x}^{\varphi^s},\,
\ldots,\,
\frac{1}{n}\sum_{s=0}^{n-1}\omega^{-i_ks}
\bar{z}^{\varphi^s}\Big]\\
&=\frac{1}{n^{k+1}}\sum_{l=0}^{n-1} \omega^l \sum_{\substack{-js_0-i_1s_1-\cdots-i_ks_k\equiv l\, (\rm
{mod}\,n)\\ 0\leq s_i\leq n-1}}[\bar u^{\varphi^{s_0}},\,
\bar{x}^{\varphi^{s_1}},\, \bar{y}^{\varphi^{s_2}},\ldots,
\bar{z}^{\varphi^{s_k}}],
\end{align*}
where $\bar u$, $\bar{x}, \ldots, \bar{z}$ are the images of the elements $u$, $x, y,\ldots, z$ in
$G/\gamma_2(G)$ regarded as elements of the Lie ring $L$.
Since $j+i_1+\cdots+i_k\equiv 0 \, ({\rm mod}\, n) $, the summation condition
$-js_0-i_1s_1-\cdots-i_ks_k\equiv l \,(\rm {mod}\,n) $ can be rewritten as
$i_1(s_0-s_1)+i_2(s_0-s_2)+\cdots+ i_k(s_0-s_k)\equiv l \,(\rm {mod}\,n)$.
Therefore the inner sum splits into several sums of the form
$\sum_{t=0}^{n-1}\left[ \bar u,\, \bar
x^{\varphi ^{a_1}},\, \bar y^{\varphi ^{a_2}},\, \ldots,\, \bar
z^{\varphi ^{a_k}}\right] ^{\varphi ^t}$. The congruences in the statement of the lemma are equivalent to
these sums being equal to 0.
\end{proof}

We shall need homomorphisms similar to \eqref{vartheta} used in \S\,\ref{s-l} but defined on the
group $G$.
For every ordered tuple $\vec v=({x,\,y,
\ldots,\, z})$ of length $k$
of elements of $G$ and every tuple
$\vec a=(a_1,\, a_2,\, \ldots,\, a_k)$ of elements of ${\Bbb Z}/n{\Bbb Z}$
we define the homomorphism
$$
\vartheta_{\vec v,\, \vec a} :\; u\rightarrow \Big(
\prod_{t=0}^{n-1}\left[ u,\, x^{\varphi ^{a_1}},\, y^{\varphi
^{a_2}},\, \ldots,\, z^{\varphi ^{a_k}}\right] ^{\varphi^t}
\Big) \gamma _{k+2}(G)
$$
of the group $G$ into
$\gamma_{k+1}(G)/\gamma_{k+2}(G)$.

 The image of an element $u$ under
 $\vartheta_{\vec v,\, \vec a}$ is equal to the product of
commuting  elements over an orbit of the
automorphism $\varphi$ in the abelian group
$\gamma_{k+1}(G)/\gamma_{k+2}(G)$ and therefore belongs to $C_{L(G)}(\varphi
)$. Hence, $|G:\mbox{Ker}\, \vartheta_{\vec x,\, \vec a}|\leq
|C_{L(G)}(\varphi )| =m$.

We further set $K(\vec v)=\bigcap_{\vec a}\mbox{Ker}\,
\vartheta_{\vec v,\, \vec a}$, where $\vec a$ runs over all tuples
of length $k$ of elements of ${\Bbb Z}/n{\Bbb Z}$. The index of
the subgroup $K(\vec v)$ is $(m, n, k)$-bounded. A straightforward calculation shows
that $K(\vec v)^h=K(\vec v^h)$, where $\vec v^h=({x^h,y^h, \ldots,
z^h})$.

We claim that the subgroup $K(\vec v)$ is $F$-invariant.
Let
$u\in K(\vec v)$. We need to show that
$u^{\varphi}\in K(\vec v)$, that is,
$$
\prod_{t=0}^{n-1}\left[ u^{\varphi},\, x^{\varphi ^{a_1}},\, y^{\varphi
^{a_2}},\, \ldots,\, z^{\varphi ^{a_k}}\right] ^{\varphi^t}\in
\gamma_{k+2}(G).
$$
for any tuple $\vec
a=(a_1,\, a_2,\, \ldots,\, a_k)$ of elements of ${\Bbb Z}/n{\Bbb Z}$. Indeed,
$$
\prod_{t=0}^{n-1}\left[ u^{\varphi},\, x^{\varphi ^{a_1}},\, y^{\varphi
^{a_2}},\, \ldots,\, z^{\varphi ^{a_k}}\right] ^{\varphi^t}
\;\equiv\; \prod_{s=0}^{n-1}\left[ u,\, x^{\varphi
^{a_1-1}},\, y^{\varphi ^{a_2-1}},\, \ldots,\, z^{\varphi
^{a_k-1}}\right] ^{\varphi^{s}} \;({\rm mod}\,\gamma _{k+2}(G))
$$
after the substitution $s=t+1$, since the
commutators commute modulo $\gamma_{k+2}(G)$.
The right-hand side is trivial modulo  $\gamma_{k+2}(G)$, since $u\in K(\vec v)$.

By Lemma~\ref{svjaz}, for a tuple
$\vec v=({x,y,\ldots,z})$ of length $k$ the corresponding subgroup $K(\vec v)$
has the following centralizer property: for any $u\in K(\vec v)$ and
for the $\varphi$-terms of the elements in $\vec v$,
\be\label{c-prop}
[u_j,\, x_{i_1},\, y_{i_2},\, \ldots,\, z_{i_k}]=0
\ee
in the Lie ring $L$  as soon as $j+i_1+i_2+\cdots+i_k\equiv 0\, (\mbox{mod}\, n)$.

\begin{notation}
 For what follows we fix the notation  $N=N(c,q)=V(T, 2(T-1),
c,q)$, where $V$ is the functions
in~\ref{kh-ma-shu-transformation}.
\end{notation}

We now begin the construction of generalized centralizers $A(i)$ of levels
$i\leq 2T-2$ with simultaneous fixation of representatives both in the group $G$
and in the homogeneous component of weight 1
of the Lie ring $L$. The  level is indicated in parenthesis. We set $A(0)=G$.
Recall that the \textit{pattern} of a commutator in elements of the $L_i$
is its bracket structure together with the arrangement of the indices.

\begin{definition0}
At level 0 we only fix
representatives of level $0$. For every pair
 $({\bf p},c)$ consisting of a pattern ${\bf p}$ of a simple
 $\varphi$-homogeneous commutator of weight $\leq N$ with nonzero indices and zero sum of indices and a commutator
 $c\in L_0$ equal to the value of this pattern on the $\varphi$-terms of elements,
we fix one such representation. The $\varphi$-terms $a_j$ of elements $a\in G$
(which belong to the homogeneous component of weight 1 of $L$)
occurring in this fixed
representation of the commutator $c$, as well as these elements $a\in G$ themselves, are called
 \textit{ring and group representatives of level~$0$} and are denoted by $x_j (0)\in L_j$ (under the Index Convention)
and $x(0)\in G$, respectively.

Together with every ring representative $x_j(0)\in L_j,$  $j\ne
0$, we fix all elements of its $H$-orbit
$O\left(x_j(0)\right)=\{x_j(0), x_j(0)^h,\,\ldots,
x_j(0)^{h^{q-1}} \}$, as well as all elements of $G$ in the
$H$-orbit $O\left(x(0)\right)=\{x(0), x(0)^h,\,\ldots,
x(0)^{h^{q-1}} \}$ of the corresponding group representative,
which we also call (ring and group)  \textit{representatives of
level~$0$}. Elements of the orbit $O(x_j(0))$ are denoted by
$x_{r^{s}j}(0):=x_j(0)^{h^{s}}$ under the Index Convention (since
$L_i^h\leq L_{ri}$). By Lemma~\ref{invariance-group} the elements
$x_{r^{s}j}(0)=x_j(0)^{h^{s}}$ are the $\varphi$-terms of the
element $x(0)^{h^s}$; therefore all ring representatives are
$\varphi$-terms of some group representatives.
% and all the
%$\varphi$-terms $(x(0)^{h^{s}})_i$, $i\neq 0$, of a group
%representative $x(0)^{h^s}$ are ring representatives being
%elements of the orbits $O(x_i(0))$, $i=1,\ldots, n-1$.
%это неверно, потому, что мы зафиксировали не все \varphi-termy, а только те,которые участвуют в записи плюс орбиту
%Альтернативно можно зафиксировать все термы и в том числе терм из L_0, но тогда представители будут изо всех компонент,
%в том числе из L_0
 Since the total number of patterns ${\bf p}$ of weight $\leq N$
is $(n,c)$-bounded, $|L_0|\leq m^{E(n)}$, and every $H$-orbit has size~$q$,
it follows that the number of  representatives of level $0$ is $(m,n,c)$-bounded.
\end{definition0}

\begin{definition3}
 Suppose that we already fixed
 $(m,n,c)$-boundedly many representatives of levels $s=0,\dots ,t-1$, both elements of the group $x(s)\in A(s)$ and
their $\varphi$-terms $x_j(s)$. Suppose also that the set of representatives is $H$-invariant.

We now define \textit{generalized centralizers of level $t$} (or,
in brief, \textit{centralizers of level $t$})  setting
$A(t)=\bigcap_{\vec x}K(\vec x)$, where $\vec
x=\left(x^1(\varepsilon_1),\, \ldots,\, x^k(\varepsilon_k)\right)
$ runs over all ordered tuples of lengths $k$ for all $k\leq N$
composed of group representatives $x^s(\varepsilon_s)\in
A(\varepsilon)$ of levels $\varepsilon_s<t$. Here we use numbering
upper indices, since lower indices always indicate the belongness
to
 $\varphi$-components of the Lie ring.

We call elements $a\in A(t)$, as well as their
$\varphi$-terms $a_j$, \textit{group and ring centralizers of level
$t$} and fix for them the notation $y(t)$ and $y_j(t)$, respectively (under the Index Convention)
indicating level in parentheses.

Clearly, $A(t)\leq A(t-1)$.
Note that the subgroup $A(t)$ is $F$-invariant, since all the subgroups $K(\vec x)$ are
$F$-invariant. We claim that $A(t)$ is also $H$-invariant.
If $y(t)\in A(t)$, then $y(t)\in K(\vec v)$
for any tuple $\vec v=\left(x^1(\varepsilon_1),\,
\ldots,\, x^k(\varepsilon_k)\right) $ of
length $k\leq N$ composed of representatives of levels $\varepsilon_j<t$. Then
$y(t)^h\in K(\vec v)^h=K(\vec v ^h)$. Since the set of
representatives of levels $<t$ is $H$-invariant, the tuples $\vec v ^h$ also run over
all tuples of lengths $k\leq N$ composed of representatives of levels $<t$.
Hence, $y(t)^h\in A(t)$.

Since the number of representatives of levels $<t$ is
$(m,n,c)$-bounded, the intersection $A(t)=\bigcap_{\vec x}K(\vec x)$ is taken over
$(m,n,c)$-boundedly many subgroups of $(m,n,c)$-bounded
index and therefore also has $(m,n,c)$-bounded index in $G$.

 Note that by \eqref{c-prop} ring centralizers
 of level $t$ have the following centralizer property with respect to
representatives of lower levels $\varepsilon_i<t$:
\begin{equation}\label{group-centralizer-property}
\left[ y_j(t), x_{i_1}(\varepsilon_1),\, \ldots, x_{i_k}(\varepsilon_k) \right]=0,
\end{equation}
as soon as $k\leq N$ and $j+ i_1+\cdots+ i_k\equiv 0\, ({\rm
mod}\, n)$. (No numbering indices here under the Index Convention,
so the $x_{i_k}(\varepsilon_k)$ with the same index may be the
$\varphi$-terms of different elements $x(\varepsilon_k)$.)

We now fix representatives of level $t$. For every pair
 $({\bf p},c)$ consisting of a pattern ${\bf p}$ of a simple
 $\varphi$-homogeneous commutator of weight $\leq N$ with
nonzero indices and zero sum of indices and a commutator
 $c\in L_0$ equal to the value of this pattern on ring centralizers of level $t$ (which are
 $\varphi$-terms $y_j(t)$ of elements $y(t)\in A(t)$),
we fix one such representation. The $\varphi$-terms $a_j$ of elements $a\in G$
(which belong to the homogeneous component of weight 1 of $L$)
occurring in this fixed
representation of the commutator $c$, as well as these elements $a\in G$ themselves, are called
\textit{ring and group representatives of level~$t$} and are denoted by $x_j (t)\in L_j$ (under the Index Convention)
and $x(t)\in G$, respectively.

Together with every ring representative $x_j(t)\in L_j$, $j\ne 0$,
we fix all elements of its $H$-orbit $O(x_j(t))=\{x_j(t),
x_j(t)^h,\,\ldots, x_j(t)^{h^{q-1}} \}$, as well as all elements
of $G$ in the $H$-orbit $O(x(t))=\{x(t), x(t)^h,\,\ldots,
x(t)^{h^{q-1}} \}$ of the corresponding group representative,
which we also call (ring and group) \textit{ representatives of
level~$t$}. Elements of the orbit $O(x_j(t))$ are denoted by
$x_{r^{s}j}(t):=x_j(t)^{h^{s}}$ under the Index Convention (since
$L_i^h\leq L_{ri}$). By Lemma~\ref{invariance-group} the elements
$x_{r^{s}j}(t)=x_j(t)^{h^{s}}$ are the $\varphi$-terms of the
element $x(t)^{h^s}$; therefore all ring representatives are
$\varphi$-terms of some group representatives.
%, and all the
%$\varphi$-terms $(x(t)^{h^{s}})_i$, $i\neq 0$, of a group
%representative $x(t)^{h^s}\in A(t)$ are ring representatives being
%, elements of the orbits $O(x_j(t))$, $j=1,\ldots, n-1$.

 Since the total number of patterns ${\bf p}$ of weight $\leq N$
is $(n,c)$-bounded, $|L_0|\leq m^{E(n)}$, and every
$H$-orbit has size~$q$, it follows that the number of
representatives of level $t$ is $(m,n,c)$-bounded.
\end{definition3}

The construction of generalized centralizers and representatives of levels
$\leq 2T-2$ is complete.
It is important that ring centralizers and representative ``in the new sense'' enjoy similar
properties as
graded centralizers and representatives defined in \S\,\ref{s-l}. In particular, we can
``freeze'' commutators in ring centralizers as
commutators in ring representatives of the same or any lower level, that is, an analogue of
Lemma~\ref{zamorazhivanie} holds. Quasirepresentatives (in the Lie ring) are defined in the same fashion as
in \S\,\ref{s-l}.
The following lemma is an analogue of Lemma~\ref{invariance}.

\begin{lemma}\label{invariance-group-2}
If $y_j(t)$ is a ring centralizer of level
$t$, then $y_j(t)^h$ is a centralizer of level $t$. If
$\hat{x}_j(t)$ is a quasirepresentative of level $t$, then $(\hat{x}_j(t))^h$ is a quasirepresentative of
level~$t$.
\end{lemma}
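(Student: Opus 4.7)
The plan is to mirror the argument of Lemma~\ref{invariance} from the Lie ring section, leveraging the two features that were deliberately built into the construction of $A(t)$ and of the representatives: $H$-invariance at every level, and the bridge given by Lemma~\ref{invariance-group} between the $\varphi$-terms of a group element and of its $h$-image.

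For the first assertion, I would unwind the definition: a ring centralizer $y_j(t)$ is by definition the $\varphi$-term $a_j$ of some group centralizer $a\in A(t)$. Applying Lemma~\ref{invariance-group} gives $(a_j)^h=(a^h)_{jr}$, so it suffices to verify that $a^h\in A(t)$. But this is precisely the $H$-invariance of $A(t)$ that was already checked during its construction: if $a\in K(\vec v)$ for every tuple $\vec v$ of group representatives of levels $<t$, then $a^h\in K(\vec v)^h=K(\vec v^{\,h})$, and as $\vec v$ ranges over all such tuples so does $\vec v^{\,h}$, because by the inductive construction the set of representatives of levels $<t$ is itself $H$-invariant. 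Hence $(a^h)_{jr}$ is a ring centralizer of level $t$, which under the Index Convention is written $y_{rj}(t)$.

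For the second assertion, a quasirepresentative $\hat x_j(t)$ of level $t$ is, by definition, a Lie-ring commutator involving exactly one ring representative $x_{i_1}(t)$ of level $t$ and some ring representatives $x_{i_s}(\varepsilon_s)$ of lower levels $\varepsilon_s<t$. Applying $h$ distributes over the commutator bracket, and Lemma~\ref{invariance-group}, combined with the fact that the full $H$-orbit of every representative was added to the representative set at the step when it was fixed, shows that each $(x_{i_s}(\varepsilon_s))^h$ is again a ring representative of the same level $\varepsilon_s$. Consequently $(\hat x_j(t))^h$ is a commutator of the same bracket structure and weight with exactly one entry of level $t$ and the remaining entries of levels $<t$, i.e.\ a quasirepresentative of level~$t$, which under the Index Convention lies in $L_{rj}$ and is denoted $\hat x_{rj}(t)$.

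I do not expect any serious obstacle here; the only point requiring care is the book-keeping between the index shift $j\mapsto rj$ forced by $L_j^h=L_{rj}$ and the fact that ``level'' is a label independent of the $\varphi$-component index. This is precisely what Lemma~\ref{invariance-group} was formulated to handle, so the proof reduces to invoking the $H$-invariance of $A(t)$ and of the representative set, both built into their inductive construction.
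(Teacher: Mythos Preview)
Your proposal is correct and follows essentially the same approach as the paper, which simply states that the assertions are proved by repeating word-for-word the proof of Lemma~\ref{invariance}. Your treatment of the first assertion via the already-established $H$-invariance of $A(t)$ together with Lemma~\ref{invariance-group} is in fact the natural translation of that argument to the group setting, and your handling of quasirepresentatives is identical to the paper's.
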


\begin{proof}
%Let $y_j(t)$ be a $\varphi$-term of an element $y(t)\in A(t)$. By Lemma~\ref{invariance-group},
%$y_{j}(t)^h=(y(t)^h)_{jr}$, that is, $y_{j}(t)^h$ is a $\varphi$-term of
%the group centralizer $y(t)^h\in A(t)^h=A(t)$ and therefore is a ring centralizer of level $t$.
% это неверно, так как не все \varphi-термы зафиксированы
The assertion of the lemma  are proved by repeating word-for-word
the proof of Lemma~\ref{invariance}.
%The assertion of the lemma about quasirepresentatives is proved by repeating word-for-word
%the proof of the similar part of Lemma~\ref{invariance}.
\end{proof}

Since the centralizer property \eqref{group-centralizer-property}
holds for commutators of weight $\leq N+1$, rather than $\leq
U+1$ as in \eqref{centralizer-property}, in an analogue of Lemma
\ref{quasirepresentatives} the weight parameter $U+1$ must be
changed to $N+1$.

\begin{lemma}\label{group-quasirepresentatives}
Any commutator involving exactly one ring centralizer ${y}_{i}(t)$ (or quasirepresentative $\hat
{x}_{i}(t)$) of level $t$ and quasirepresentatives of lower  levels $< t$ is equal to
 $0$ if the sum of indices of its entries is equal to $0$ and the sum
of their weights is at most~$N+1$. \end{lemma}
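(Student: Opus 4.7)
The plan is to mimic the proof of Lemma~\ref{quasirepresentatives} in the Lie ring section, with the weight bound $U+1$ there replaced by $N+1$, so that the hypothesis of the centralizer property~\eqref{group-centralizer-property} is met.

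First I would unfold the quasirepresentative structure. By definition every quasirepresentative of level $s$ is a commutator involving exactly one ring representative of level $s$ and some ring representatives of levels strictly less than $s$. Replacing each quasirepresentative entry in the given commutator by its defining expression, I obtain a (more complicated but equivalent) commutator of the same total weight $\le N+1$ in ring centralizers and representatives, in which exactly one ring centralizer $y_j(t)$ of level $t$ appears — either the original $y_i(t)$, or else the level-$t$ centralizer that is hidden inside $\hat{x}_i(t)$ — while every other entry is a ring representative of some level $\varepsilon_s<t$.

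Second, applying only the Jacobi identity $[a,[b,c]]=[a,b,c]-[a,c,b]$ and the anticommutativity $[a,b]=-[b,a]$, I rewrite this commutator as a $\mathbb{Z}$-linear combination of simple commutators of the form
\[
[y_j(t),\, x_{i_1}(\varepsilon_1),\, x_{i_2}(\varepsilon_2),\, \ldots,\, x_{i_k}(\varepsilon_k)],
\]
each beginning with the unique level-$t$ ring centralizer and continuing with ring representatives of levels $\varepsilon_s<t$. These manipulations take place in the free Lie ring, so they preserve the multiset of entries and, in particular, the sum of their indices modulo $n$; hence each resulting simple commutator still satisfies $j+i_1+\cdots +i_k\equiv 0\pmod n$. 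The total weight is still at most $N+1$, so $k\le N$.

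Third, each such simple commutator now falls exactly under the centralizer property~\eqref{group-centralizer-property} enjoyed by ring centralizers of level $t$, and therefore vanishes. Consequently the whole linear combination is $0$, which gives the lemma. There is no real obstacle: the only point to verify is that the weight bound in the hypothesis ($\le N+1$) matches the bound $k\le N$ needed to invoke~\eqref{group-centralizer-property}, and that the Jacobi/anticommutativity reduction keeps us within this bound and preserves the congruence on indices — both are immediate from the corresponding verification in the proof of Lemma~\ref{quasirepresentatives}.
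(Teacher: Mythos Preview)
Your proposal is correct and matches the paper's own argument essentially word for word: the paper simply says the proof repeats that of Lemma~\ref{quasirepresentatives} with the centralizer property~\eqref{centralizer-property} replaced by~\eqref{group-centralizer-property}, which is exactly the unfolding-then-Jacobi/anticommutativity reduction you describe.
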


\begin{proof}
The proof repeats word-for-word the proof of Lemma~\ref{quasirepresentatives} with the
centralizer property \eqref{centralizer-property} replaced by \eqref{group-centralizer-property}.
 \end{proof}

\paragraph{Induction parameter.}
In contrast to Theorem~\ref{t-l}, where we proved the nilpotency of the Lie subring
generated by centralizers of maximal level, in Theorem~\ref{t-g} we prove that the Lie ring $L$ itself
is nilpotent of bounded class (in a certain critical situation). Therefore here we must consider
commutators in arbitrary $\varphi$-terms of elements. The following parameter
enables us to control the possibility of replacing $\varphi$-homogeneous subcommutators in
$L_0$ by the values of the same patterns on representatives of higher
levels.

 \begin{definition}
 The \textit{induction parameter} is defined to be the triple $(m,\, \bar m,\, t)$,
where $m=|C_G(\varphi )|$;
$\bar m=(m_1, m_2, \ldots, m_N)$ for
$m_j=|C_{\gamma_j(G)/\gamma_{j+1}(G)}(\varphi )|$;
and $t=|{\mathscr P}(G)|$, where ${\mathscr P}(G)$
is the set of all pairs $({\bf p}, c)$ consisting of a pattern ${\bf p}$ of a weight $\leq N$ with nonzero
indices of entries  and zero  sum of indices  and a commutator $c\in L_0$ equal to the value of ${\bf p}$
on $\varphi$-terms $a_j$ of elements $a\in G$.\end{definition}

We denote by $(m(B), \bar m(B), t(B))$ the triple constructed in the same fashion for
an $ F$-invariant
subgroup $B$. In particular, if $M=L(B)\otimes_{\Bbb Z}\Bbb Z[\omega]$ and
$M=M_0\oplus M_1\oplus \cdots \oplus M_{n-1}$ is the decomposition into the direct sum of
$\varphi$-components, then ${\mathscr P}(B)$ is the set of pairs
$({\bf p}, c)$ consisting of a pattern ${\bf p}$ of weight $\leq N$ with nonzero
indices of entries  and zero  sum of indices  and a commutator $c\in M_0$ equal to the value of ${\bf p}$
on $\varphi$-terms $b_j$ in $M$ of elements $b\in B$ (defined in the same fashion as $\varphi$-terms for $G$ and $L$).

We introduce the inverse lexicographical order on the vectors
$\bar m = (m_1, m_2, \ldots, m_N)$:
$$(m_{11}, m_{12}, \ldots, m_{1N}) <
(m_{21}, m_{22}, \ldots, m_{2N}) \Leftrightarrow$$
$$\Leftrightarrow \text{ for some } k \geq 1 \; \; m_{1i} = m_{2i}
\text{ for all } i < k \text{ and } m_{1k} > m_{2k}.$$

We introduce the lexicographical order on the triples $(m,\, \bar m,\, t)$:
\begin{align*} (m_1, \bar m_1, t_1) < (m_2, \bar m_2, t_2) \Leftrightarrow &  \text{ either }m_1 < m_2, \\
& \text{ or } m_1 = m_2\text{ and } \bar m_1 < \bar m_2, \\
& \text{ or } m_1 = m_2,\, \, \, \bar m_1 = \bar m_2,
\text{ and } t_1 < t_2.
\end{align*}

\begin{lemma}\label{nerav}
For any $\varphi$-invariant subgroup $B\leq G$ we have
$(m(B), \bar m(B), t(B))\leq (m(G), \bar m(G), t(G))$ with respect to the order introduced above.
\end{lemma}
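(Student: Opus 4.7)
The plan is to verify the triple inequality coordinate-by-coordinate, using the coprime action of $\varphi$ on the nilpotent group $G$ as the main tool. First, since $C_B(\varphi) = B\cap C_G(\varphi)\leq C_G(\varphi)$, we have $m(B)\leq m(G)$ immediately; if strict, we are done at the first coordinate, so assume $m(B) = m(G)$, i.e.\ $C_G(\varphi)\leq B$.

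Next I would exploit the multiplicative coprime-action formula: for any $\varphi$-invariant subgroup $A$ with $\varphi$-invariant normal subgroup $K$, one has $|C_A(\varphi)| = |C_K(\varphi)|\cdot|C_{A/K}(\varphi)|$. Writing $c_j(X) = |C_{\gamma_j(X)}(\varphi)|$ for $X\in\{B,G\}$, this yields $m_j(X) = c_j(X)/c_{j+1}(X)$ and hence
\[
\prod_{i=1}^j m_i(X) \;=\; \frac{c_1(X)}{c_{j+1}(X)}.
\]
Since $\gamma_{j+1}(B)\leq \gamma_{j+1}(G)$ implies $c_{j+1}(B)\leq c_{j+1}(G)$, and $c_1(B) = c_1(G)$ by the current assumption, we obtain $\prod_{i=1}^j m_i(B)\geq \prod_{i=1}^j m_i(G)$ for every $j$, while both products stabilize at $m(B) = m(G)$. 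Hence at the first index $k$ where the sequences $\bar m(B)$ and $\bar m(G)$ disagree (if any), we must have $m_k(B) > m_k(G)$, i.e.\ $\bar m(B) < \bar m(G)$ in the inverse lexicographic order.

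Finally, assume also $\bar m(B) = \bar m(G)$. Then $c_j(B) = c_j(G)$ for all $j$, and together with the inclusion $C_{\gamma_j(B)}(\varphi)\leq C_{\gamma_j(G)}(\varphi)$ this forces equality for every~$j$. The homomorphism $L(B)\to L(G)$ induced by the inclusion $B\leq G$, restricted to the $\varphi$-fixed part of each graded piece $\gamma_j(B)/\gamma_{j+1}(B)$, has image $C_{\gamma_j(G)}(\varphi)\gamma_{j+1}(G)/\gamma_{j+1}(G)$, which by the coprime-action formula is all of $C_{\gamma_j(G)/\gamma_{j+1}(G)}(\varphi)$; surjectivity together with equal cardinalities $m_j(B) = m_j(G)$ forces bijectivity on each $\varphi$-fixed piece, so after tensoring with $\Bbb Z[\omega]$ the induced map $M_0\to L_0$ is a bijection. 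A pair $({\bf p},c)\in\mathscr{P}(B)$ is sent to a well-defined pair $({\bf p},c')\in\mathscr{P}(G)$ by taking $c'$ to be the image of $c$, since a Lie-ring homomorphism carries the value of ${\bf p}$ on $\varphi$-terms in $M$ of elements of $B$ to the value of ${\bf p}$ on the corresponding $\varphi$-terms in $L$ of the same elements viewed in $G$; the bijectivity of $M_0\to L_0$ then makes this assignment injective, yielding $t(B)\leq t(G)$.

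The main subtlety is the last step: one must justify that the map $L(B)\to L(G)$ becomes bijective on the zero $\varphi$-component precisely under the equalities $m(B) = m(G)$ and $\bar m(B) = \bar m(G)$, which is exactly what the inverse-lexicographic design on $\bar m$ is there to detect; the rest amounts to the standard coprime-action bookkeeping.
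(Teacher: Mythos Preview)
Your proof is correct and follows essentially the same route as the paper: both handle $m$ trivially, both reduce the $\bar m$-comparison to the telescoping identity $\prod_{i=1}^j m_i(X)=|C_{\gamma_1(X)}(\varphi)|/|C_{\gamma_{j+1}(X)}(\varphi)|$ (the paper uses it implicitly, you state it explicitly), and both handle $t$ by constructing an injection $\mathscr{P}(B)\hookrightarrow\mathscr{P}(G)$ once the fixed-point groups $C_{\gamma_k(B)}(\varphi)=C_{\gamma_k(G)}(\varphi)$ coincide for $k\leq N$. Your third step is somewhat cleaner than the paper's: where the paper unpacks the comparison into explicit systems of group congruences modulo $\gamma_{k+1}(B)$ versus $\gamma_{k+1}(G)$ (its Lemma~\ref{same}), you simply observe that the inclusion $B\hookrightarrow G$ induces a graded $\varphi$-equivariant Lie ring homomorphism $L(B)\to L(G)$ which therefore carries $\varphi$-terms to $\varphi$-terms and patterns to patterns, and then check bijectivity on the relevant $\varphi$-fixed graded pieces; this is the same content, but packaged more conceptually. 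One minor imprecision: from $\bar m(B)=\bar m(G)$ you only get $c_j(B)=c_j(G)$ for $j\leq N+1$, not literally ``for all $j$'', but since $\mathscr{P}$ involves only patterns of weight $\leq N$ this is all that is needed.
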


\begin{proof}
Clearly, $m(B)\leq m(G)$. Now suppose that $m(B)=m(G)$, that is,
$C_B(\varphi)= C_G(\varphi)$; we claim that then $\bar m(B)\leq
\bar m(G)$. Suppose that for some  $1\leq k\leq N$ we  have
$m_i(B)=m_i(G)$ for all $i<k$ (this is vacuous for $k=1$); we need
to show that $m_k(B)\geq m_k(G)$. Since
$|C_G(\varphi)|=|C_B(\varphi)|$ and $m_i(B)=m_i(G)$ for all $i<k$,
we have $|C_{\gamma_k(B)}(\varphi)|=|C_{\gamma_k(G)}(\varphi)|$.
Since $C_{\gamma_k(B)}(\varphi) \leq C_{\gamma_k(G)}(\varphi)$,
these subgroups coincide. Let %заменила С на D:-)
$D:=C_{\gamma_k(B)}(\varphi)= C_{\gamma_k(G)}(\varphi)$. Since
$(|G|, n)=1$, we have $C_{\gamma_{k}(B)/\gamma_{k+1}(B)}(\varphi)=
D \gamma_{k+1}(B)/\gamma_{k+1}(B)\cong D/D\cap \gamma_{k+1}(B)$,
as well as $C_{\gamma_k(G)/\gamma_{k+1}(G)}(\varphi)= D
\gamma_{k+1}(G)/\gamma_{k+1}(G)\cong D/D\cap \gamma_{k+1}(G)$.
Clearly, $|D/D\cap \gamma_{k+1}(B)|\geq |D/D\cap
\gamma_{k+1}(G)|$. Hence, $m_k(B)\geq m_k(G)$.
%Может что-то в отдельную строку, а то читать трудно --- все сливается.

 Finally, suppose
that $\bar m(B)=\bar m(G)$; we claim that then $t(B)\leq t(G)$. We
saw above that then $C_G(\varphi)\cap \gamma_k(B)=C_G(\varphi)\cap
\gamma_k(G)$ for all  $k\leq N$. Hence  for every $k\leq N$,
$$
C_{\gamma_k(B)/\gamma_{k+1}(B)}(\varphi)\cong
\left(C_G(\varphi)\cap \gamma_k(B)\right)/\left(C_G(\varphi)\cap
\gamma_{k+1}(B)\right)=$$ $$=\left(C_G(\varphi)\cap
\gamma_k(G)\right)/\left(C_G(\varphi)\cap
\gamma_{k+1}(G)\right)\cong
C_{\gamma_k(G)/\gamma_{k+1}(G)}(\varphi).
 $$
For $k\leq N$, let $\bar c$ and $\tilde c$ denote the images of an
element $c\in C_{\g _k(G)}(\varphi)=C_{\g _k(B)}(\varphi)$ % B вместо N
 in
$\gamma_{k}(G)/\gamma_{k+1}(G)$ and
$\gamma_{k}(B)/\gamma_{k+1}(B)$, respectively. Clearly, the
isomorphism $C_{\gamma_k(B)/\gamma_{k+1}(B)}(\varphi)\cong
C_{\gamma_k(G)/\gamma_{k+1}(G)}(\varphi)$ is induced by the
mapping
 $\tilde c\to\bar c$.

It is also clear that the same mapping induces the natural
isomorphism
 $$
 \sigma _k: C_{\gamma_k(B)/\gamma _{k+1}(B)}(\varphi
)\otimes _\Bbb Z \, \Bbb Z [\omega]\;\to \;
C_{\gamma_k(G)/\gamma_{k+1}(G)}(\varphi )\otimes _\Bbb Z \, \Bbb Z
[\omega].
 $$
In view of the decompositions
\be \label{euler}
L=\bigoplus_{i=0}^{E(n)-1}
(L(G)\otimes \w ^i)\quad \text{and}\quad  M=\bigoplus_{i=0}^{E(n)-1} (L(B)\otimes
\w ^i),
\ee
where $E(n)$ is the Euler function (see \eqref{euler0}), the isomorphism $\sigma _k$ is
given by the mapping
 $$
\sigma _k:\; \sum _{i=0}^{E(n)-1}\tilde c ^i\w ^i \;\to\; \sum _{i=0}^{E(n)-1}\bar c ^i\w ^i,
 $$
where $c^0,c^1,\dots ,c^{E(n)-1}$ with numbering upper indices are elements of $C_{\g _k(G)}(\varphi)=C_{\g
_k(B)}(\varphi)$.

Now suppose that an element $\sum _{i=0}^{E(n)-1}\tilde c ^i\w ^i \in C_{\gamma_k(B)/\g
_{k+1}(B)}(\varphi )\otimes _\Bbb Z \, \Bbb Z [\omega]$ is equal to the value of a
pattern ${\bf p}$ %=[*_{i_1},\ldots,*_{i_k}]$
with nonzero
indices and zero sum of indices  of weight $k\leq N$ in $\varphi$-terms
of some elements  of  $B/\gamma_2(B)$ regarded as elements of the Lie ring
$M$. This means  that
\begin{equation}\label{group-f-1}
\sum _{i=0}^{E(n)-1}\tilde c ^i\w ^i = \varkappa (\tilde
b^1_{i_1},\ldots, \tilde b^k_{i_k}),
\end{equation} where $\ka$ is a
$\varphi$-homogeneous commutator of weight $k$
 with nonzero
indices and zero sum of indices in the $\varphi$-terms $\tilde
b^1_{i_1},\ldots, \tilde b^k_{i_k}$  in $M$
of elements $b^1,\ldots, b^k\in B$ with numbering upper indices.
(We use tildes to denote the $\varphi$-terms in $M$
to distinguish them from $\varphi$-terms of the same elements
$b^1,\ldots, b^k$ with respect to $G$ and $L$.)
In view of \eqref{euler} the equality of two
elements of the Lie ring $M$
is equivalent, after  collecting terms,
to the equalities of the coefficients of $1, \omega, \ldots,
\omega^{E(n)-1}$ --- the coefficients which are elements of the Lie ring
$L(B)$.

 Since the $\varphi$-terms
$\tilde b^s_{i_s}\in L_{i_s}$ in \eqref{group-f-1} are canonically expressed with coefficients in $\Z [\w ]$
in terms of the images in $B/\gamma_2(B)$ of the elements
$(b^s)^{\varphi ^j}\in B$, equation \eqref{group-f-1} in $M$ is equivalent to a certain
system of congruences of group commutators $\ka ^{\alpha}$ (with numbering upper indices) of weight $k$ in the
elements $(b^s)^{\varphi^j}\in B$:
\begin{equation}\label{systema}
 \left\{
\begin{array}{ccl}
c^0&\equiv &\prod_{\alpha} \ka^{\alpha} \;({\rm mod}\, \gamma_{k+1}(B))\\
c^1&\equiv &\prod_{\alpha}  \ka^{\alpha}\; ({\rm mod}\, \gamma_{k+1}(B))\\
\vdots && \\

c^{E(n)-1}&\equiv &\prod_{\alpha}  \ka^{\alpha}\; ({\rm mod}\,
\gamma_{k+1}(B))
\end{array}
\right.
\end{equation}

We now define a mapping
\begin{equation}\label{nu}
\nu _k: ({\bf p}, \tilde C)\rightarrow ({\bf p},\bar C),
\end{equation}
where $\tilde C = \sum _{i=0}^{E(n)-1}\tilde c ^i\w ^i$ for $c^{i}\in C_{\g _k(G)}(\varphi)=C_{\g
_k(B)}(\varphi)$, and  $\bar C = \sigma _k(\tilde C)=\sum _{i=0}^{E(n)-1}\bar c ^i\w ^i$ for the same elements
$c^{i}$.

\bl \label{same} If $\bar m(B)=\bar m(G)$, then $\nu _k(({\bf
p},\tilde C))\in {\mathscr P}(G)$ and, moreover, $\bar C$ is the
value of the same pattern ${\bf p}$ on the $\varphi$-terms
$b^1_{i_1},\ldots, b^k_{i_k}$ in the Lie ring $L$ of the same
elements $b^1,\ldots, b^k$ in \eqref{group-f-1}, that is, $\bar
C=\ka (b^1_{i_1},\ldots, b^k_{i_k}) $ for the same commutator $\varkappa$ as in \eqref{group-f-1}.
\el

\bp Indeed, the system of congruences \eqref{systema} remains
valid if we replace $\gamma_{k+1}(B)$ by a larger subgroup
$\gamma_{k+1}(G)$. But modulo $\gamma_{k+1}(G)$ this system is
equivalent to the required equation $\bar C= \varkappa (b^1_{i_1},\ldots, b^k_{i_k}) $
  in $L$. Thus, the pair
$({\bf p},\bar C)=\nu _k({\bf p},\tilde C)$ belongs to  ${\mathscr
P}(G)$. \ep

We now complete the proof of Lemma~\ref{nerav}. As we saw above, if $\bar m(B)=\bar m(G)$ for all
$k\leq N$, then the  mapping $\sigma _k:\tilde C\to \bar C$ is an isomorphism. Hence the union
$\nu =\bigcup _{k=1}^N\nu _k$ of the mappings
\eqref{nu}
is an injective  mapping of ${\mathscr P}(B)$ into ${\mathscr P}(G)$. Thus, $t(B)\leq t(G)$ if $\bar m(B)=\bar m(G)$.
\end{proof}

It follows from the above that if $(m(B), \bar
m(B), t(B))=(m(G), \bar m(G), t(G))$, then $\nu$
is a one-to-one correspondence between the sets
${\mathscr P}(B)$ and ${\mathscr P}(G)$. Applying this to the centralizers $A(i)$ we obtain the following.

\begin{corollary} \label{group-cor}
Suppose that $(m(A(i)), \bar m(A(i)), t(A(i)))=(m(G),
\bar m(G), t(G))$ for all levels $i=1,\ldots, 2T-2$.  Then
every simple $\varphi$-homogeneous commutator in $C_L(\varphi)=L_0$
equal to the value of a pattern ${\bf p}$ with nonzero indices of weight $k\leq N$ on $\varphi$-terms of some
elements of $G$ can be represented as the value of the same pattern ${\bf p}$
on representatives of level $s$ for every
$s=0, 1,\ldots, 2T-2$.
\end{corollary}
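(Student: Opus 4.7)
The plan is to deduce the corollary almost immediately from the bijection $\nu$ observed just before the statement. Fix $s\in\{0,1,\ldots,2T-2\}$. For $s=0$ the conclusion is tautological: by the Definition of level $0$, every pair $({\bf p},c)\in\mathscr{P}(G)$ already comes with a fixed representation of $c$ as the value of ${\bf p}$ on $\varphi$-terms of elements of $G=A(0)$, and these $\varphi$-terms are by definition the representatives of level $0$. So the substantive case is $1\leq s\leq 2T-2$.

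For such $s$, I would first use the observation stated immediately after Lemma~\ref{nerav}: the equality $(m(A(s)),\bar m(A(s)),t(A(s)))=(m(G),\bar m(G),t(G))$ makes the map $\nu$ from \eqref{nu} a one-to-one correspondence between $\mathscr{P}(A(s))$ and $\mathscr{P}(G)$. Given a pair $({\bf p},c)\in\mathscr{P}(G)$, I pull it back to the unique pair $({\bf p},\tilde c)\in\mathscr{P}(A(s))$ with $\nu({\bf p},\tilde c)=({\bf p},c)$. By definition of $\mathscr{P}(A(s))$, the element $\tilde c$ is the value of ${\bf p}$ on the $\varphi$-terms in $M=L(A(s))\otimes_{\Bbb Z}{\Bbb Z}[\w]$ of some elements $b^1,\ldots,b^k\in A(s)$.

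The next step is to transfer this representation from $M$ back to $L$ via Lemma~\ref{same}, which ensures that $c=\bar c=\sigma_k(\tilde c)$ equals the value of the same pattern ${\bf p}$ on the $\varphi$-terms in $L$ of the same elements $b^1,\ldots,b^k$. Since the $\varphi$-terms in $L$ of elements of $A(s)$ are by definition ring centralizers of level $s$, this exhibits $c$ as the value of ${\bf p}$ on ring centralizers of level $s$. Finally, the Definition of level $t>0$ is tailored to exactly this situation: for every pair $({\bf p},c)$ realizable on ring centralizers of level $s$, one such fixed realization was chosen during construction, and its entries are by definition the representatives of level $s$. This produces the required representation of $c$ as the value of ${\bf p}$ on representatives of level $s$.

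The main obstacle is conceptual rather than computational: one must keep straight the three ambient Lie rings ($L$, $M=L(A(s))\otimes\Z[\w]$, and $L(G)$), together with the corresponding notions of $\varphi$-terms and $\varphi$-homogeneous commutators in each, and note that the isomorphism $\sigma_k$ makes these compatible when the induction parameter is preserved. All of the technical work has already been carried out in Lemmas~\ref{nerav} and~\ref{same}; the corollary is essentially a restatement of those results in the language of representatives, so I expect the full proof to occupy only a few lines.
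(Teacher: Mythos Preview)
Your proposal is correct and follows essentially the same approach as the paper: pull back the pair $({\bf p},c)\in\mathscr{P}(G)$ along the bijection $\nu$ to $\mathscr{P}(A(s))$, apply Lemma~\ref{same} to obtain a representation of $c$ in $L$ on $\varphi$-terms of elements of $A(s)$ (i.e., ring centralizers of level $s$), and then invoke the definition of representatives of level $s$ to freeze. The only cosmetic difference is that you separate out the case $s=0$ explicitly, whereas the paper's proof treats all $s$ uniformly.
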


\begin{proof}
Suppose that $\bar C\in C_{\gamma_k(G)/\g _{k+1}(G)}(\varphi
)\otimes _\Bbb Z \, \Bbb Z [\omega]$ is equal to the value of the
pattern ${\bf p}=[*_{i_1},\ldots,*_{i_k}]$ with nonzero indices
and zero sum of indices of weight $k\leq N$ on $\varphi$-terms of
some elements. Since $(m(A(s)), \bar m(A(s)), t(A(s)))=(m(G), \bar
m(G), t(G))$,  the mapping $\nu$ is a one-to-one correspondence
between ${\mathscr P}(B)$ and ${\mathscr P}(G)$. Hence the pair
$({\bf p}, \bar C)$ is the  image under $\nu _k$ of a pair $({\bf
p}, \tilde C)$ for the same pattern ${\bf p}$ and for $\tilde
C=[\tilde g^1_{i_1},\ldots, \tilde g^k_{i_k}]$ in $M$, where
$\tilde g^j_{i_t}$ are $\varphi$-terms in $M=L(A(s))\otimes _\Bbb
Z \, \Bbb Z [\omega]$ of elements $g^j\in A(s)$. Moreover,  by
Lemma~\ref{same} then $\bar C=[g^1_{i_1},\ldots,  g^k_{i_k}]$ in
$L$, where $g^j_{i_t}$ are the corresponding $\varphi$-terms in
$L$ of the same elements $g^j\in A(s)$. Since the elements $g^j$
belong to the generalized centralizer $A(s)$ (and therefore should
be denoted $y^j(s)=g^j$),
by the construction of
representatives of level $s$ the commutator $\bar
C=[y^1_{i_1}(s),\ldots, y^k_{i_k}(s)]$ can be ``frozen'' in
level $s$, that is, represented as the value of the same pattern
on fixed ring representatives $\bar C =[x^1_{i_1}(s),\ldots,
x^k_{i_k}(s)]$.
\end{proof}

\paragraph{Completion of the proof of Theorem \ref{t-g}}
Note that for a given value of  $|C_G(\varphi)|=m(G)=m$ the number of
possible triples $(m(G),\, \bar m(G),\, t(G))$ is obviously
$(m,n,c)$-bounded. Therefore we can use induction on the
parameter $(m(G), \bar m(G), t(G))$ in order to show that the
group $G$ contains a subgroup of $(m,n,c)$-bounded index
that is nilpotent of $(c,q)$-bounded class  $< N$. The basis of
induction is the case $m(G)=1$, which means that $C_G(F)=1$; then
 the group $G$ is nilpotent of class $\leq f(c,q)<N$ by the
Makarenko--Khukhro--Shumyatsky Theorem~\cite{khu-ma-shu}.

If for some $i=1,\ldots ,2T-2$ the induction parameter for
the subgroup $A(i)$ becomes smaller, that is,
$(m(A(i)), \bar m(A(i)), t(A(i)))<(m(G), \bar m(G), t(G))$, then by the induction hypothesis applied to
the $FH$-invariant subgroup $A(i)$ it contains a subgroup of $(m,n,c)$-bounded index in $A(i)$
that is nilpotent of class $<N$, which is a required subgroup, since the index of
$A(i)$ in $G$ is also $(m,n,c)$-bounded.

Therefore it is sufficient to consider the case where $(m(A(i)), \bar m(A(i)), t(A(i)))=(m(G),
\bar m(G), t(G))$  for all  $i=1,\ldots 2T-2$, and we assume this in what follows.
We claim that in this critical situation the whole group is nilpotent
of class $<N$. For that it is sufficient to show that the Lie ring $L$
is nilpotent of class $<N$.

Note that we can also assume that
$C_G(F)\leq \gamma_2(G)$, since in the opposite case
$C_{[G,F]}(F)<C_G(F)$, and then the result follows by the induction hypothesis
applied to the $FH$-invariant subgroup $[G,F]$, whose index is $\leq m$, since $G=[G,F] C_G(F)$.
Therefore also $C_L(\varphi)\leq \gamma_2(L)$ and the Lie ring $L$
is generated by $\varphi$-terms $a_i\in L_i$ of elements for
$i\neq 0$. Since the nilpotency identity can be verified on the generators of the Lie ring,
it is sufficient to show that
\begin{equation}\label{group-f-4}
[a_{i_1}, \ldots, a_{i_N}]=0
\end{equation}
 for any $\varphi$-terms  $a_{i_s}\in L_{i_s}$, $i_s\neq 0$, of elements of $G$.
 For that, in turn, it is sufficient to show
 the triviality of all commutators of the form~\eqref{f1} and~\eqref{f2} in
Proposition~\ref{kh-ma-shu-transformation} applied to
the commutator~\eqref{group-f-4} with $t_1=T$ and $t_2=2(T-1)$, where, recall,
$N=V(T, 2(T-1), c,q )$ and $T=f(c,q)+1$ for the function $f(c,q)$ in
Proposition~\ref{kh-ma-shu10-1}.

We now repeat, almost word-for-word, the final arguments in the proof of
Theorem~\ref{t-l}, with obvious replacement of the
centralizer property relative to representatives in the old sense by the similar property in
the new sense. This is possible, since in the critical situation under consideration
Corollary~\ref{group-cor} guarantees the possibility of representing elements in $L_0$ that
are commutators in  $\varphi$-terms of elements in the form of the values of the same patterns in
representatives of any level $\leq 2T-2$. A small modification of the arguments, which required
replacing $T-1$ by $2(T-1)$ as  the value   of the parameter $t_2$  in
Proposition~\ref{kh-ma-shu-transformation}, is only needed for a commutator of the form~\eqref{f2}.

First we consider a  commutator
\begin{equation}\label{group-f-5}
[u_{k_1},\ldots, u_{k_s}],
\end{equation}
of the form~\eqref{f1}
having $T$ distinct initial segments with zero
sum of indices modulo~$n$, that is, with $k_1+k_2+\cdots+k_{r_i}\equiv 0\; ({\rm mod}\, n)$ for
$1<r_1<r_2<\cdots<r_{T}=s$. Since
$(m(A(i)), \bar m(A(i)), t(A(i)))=(m(G),
\bar m(G), t(G))$ for all $i=1,\ldots, 2T-2$, by Corollary~\ref{group-cor} we can represent the
commutator~\eqref{group-f-5} as the value of the same pattern on representatives of level $T$.
Then, using the inclusions $A(i)\geq A(i+1)$, we successively represent
the initial segments of lengths  $r_{T-1}, r_{T-2},\ldots $ of the resulting commutator as the value of the same
pattern on representatives of levels $T-1, T-2, \ldots$
(since all these initial segments, as well as the
commutator~\eqref{group-f-5} itself, belong to $L_0$). As a result we obtain a
commutator equal to~\eqref{group-f-5} and having the form
$$
[x(1),\ldots, x(1),x(2),\ldots, x(2),\ldots, \ldots, x(T),\ldots,
x(T)],
$$
where we omitted indices to lighten the notation. We apply to this
commutator the same, word-for-word,  arguments that prove the equality to 0 of the
commutator~\eqref{main-f-12}. We only need to replace the centralizer
property~\eqref{centralizer-property} and Lemma~\ref{quasirepresentatives} used in \S\,\ref{s-l} by the
centralizer property~\eqref{group-centralizer-property} and Lemma~\ref{group-quasirepresentatives}.

We now consider a commutator
\begin{equation}\label{group-f-6}
[a_j, c^1_0,\ldots ,c^{2T-2}_0]
\end{equation}
of the form~\eqref{f2}, where $c^i_0\in L_0$ with numbering upper
indices are simple $\varphi$-homogeneous commutators in
$\varphi$-terms in $L_j$, $j\neq 0$, of elements of $G$. For each
$s=1,\ldots,2T-2$ we substitute into~\eqref{group-f-6} an
expression of the commutator $c^s_0$ as the value of the same
pattern of weight $<N$ on representatives of level $s$, which is
possible by Corollary~\ref{group-cor}.
After expanding
the inner brackets we obtain a linear combination of commutators
of the form
\be\label{bad}
[a_j, x(1),\ldots, x(1),\,\ldots, x(2T-2),\ldots,
x(2T-2)].
 \ee
In contrast to the commutator~\eqref{main-f-22} in
\S\,\ref{s-l}, here $a_j$ does not necessarily belong to a centralizer of high level, so we need a different argument.

The same arguments as those applied above to the
commutator~\eqref{main-f-22} make it possible to represent \eqref{bad} as
a linear combination of commutators with initial segments  of the
form
 \be\label{dvoinoi}
 [a_j, \hat x(1), \hat x(2)\ldots, \hat x(2T-2)],
 \ee
 where $a_j$ is followed by  quasirepresentatives of
levels $1,\ldots, 2T-2$, one in each level.
By Proposition~\ref{combinatorial} the initial segment $[a_j, \hat x(1),\hat x(2),\,\ldots, \hat x(T-1)]$
of  weight $T$
 of the
commutator \eqref{dvoinoi} is equal to a linear combination of simple
$\varphi$-homogeneous commutators in elements of the $H$-orbits
of the elements $a_j, \hat x(1),\hat x(2),\,\ldots, \hat x(T-1)$
each involving exactly the same number of elements of each $H$-orbit of these elements as \eqref{dvoinoi}
and having an initial segment in $L_0$. By Lemma~\ref{invariance-group-2} each
element $\hat{x}_ {k_i}(i)^{h^{l}}$ is a
quasirepresentative of the form $\hat{x}_{r^lk_i}(i)$ of level $i$.
If such an initial segment does not contain an element of the
$H$-orbit of $a_j$, then this initial segment is equal to 0
by Lemma~\ref{group-quasirepresentatives}, since the levels are all different.
If, however, this initial segment in $L_0$ does contain
an  element of the $H$-orbit  of $a_j$, then we can freeze it in level 0, that is, replace by the value of the same
pattern on representatives of level $0$. Therefore it remains to prove equality to zero of
a  commutator of the form
\begin{equation}\label{group-f-7}
[x(0),\ldots, x(0), \hat x(\varepsilon_{r+1}),\ldots,\hat x(\varepsilon_{s}), x(T),\ldots,
x(T),\ldots, x(2T-2),\ldots, x(2T-2)],
\end{equation}
which now involves only (quasi)representatives, and the  levels
$\varepsilon_{r+1},\ldots,\varepsilon_{s}$ are all less than~$T$.
We now apply almost the same collecting process as the one applied
above to~\eqref{main-f-22}. The difference is that we move to
the left only (first from the left) quasirepresentatives  of
levels $\geq T$, and collected parts are initial segments of the
form $[x(0), \hat x(T),\ldots, \hat x(T+s)]$. As a result the
commutator~\eqref{group-f-7} becomes equal to a linear combination
of commutators in quasirepresentatives with initial segments of
length $T$ of the form $[x(0), \hat x(T),\ldots, \hat x(2T-2)]$.
By applying  Proposition~\ref{combinatorial} to such an initial
segment we obtain a linear combination of $\varphi$-homogeneous
commutators in elements of the $H$-orbits of the  elements $x(0), \hat x(T),\ldots, \hat
x(2T-2)$, each involving exactly the same number of elements of each $H$-orbit of these elements as that initial segment and having an initial segment in $L_0$.
By  Lemma~\ref{invariance-group-2} elements of the $H$-orbits of the  elements $x(0), \hat x(T),\ldots, \hat
x(2T-2)$
are also  quasirepresentatives of the same levels. These initial segments in $L_0$
are equal to $0$ by Lemma~\ref{group-quasirepresentatives}, since
the levels are all different. \qed


\begin{thebibliography}{88}




\bibitem{bru-nap04}  B.~Bruno and F.~Napolitani, A note on nilpotent-by-\v{C}ernikov groups,
\emph{Glasgow Math.~J.}  \textbf{46} (2004), 211--215. %точка, запятая

\bibitem{hart} B. Hartley, A general Brauer-Fowler theorem and centralizers in locally finite
groups,  \emph{Pacific J. Math.} {\bf 152}, no.~1 (1992), 101--117.
 %тире


\bibitem{ha-is}
{ B. Hartley and I. M. Isaacs}, On characters and fixed points of
coprime operator groups, \emph{J. Algebra} {\bf 131} (1990),
342--358. %тире

\bibitem{hi}
{G.\,Higman}, Groups and rings which have automorphisms without
non-trivial fixed elements, \emph{J.~London Math. Soc.} {\bf 32}
(1957), 321--334.

\bibitem{hpbl}
{B. Huppert and N. Blackburn}, \emph{Finite groups}~II, Springer,
Berlin, 1982.

\bibitem{kr}
{ V. A. Kreknin}, The solubility of Lie algebras with regular
automorphisms of finite period, \emph{Math. USSR Doklady} {\bf 4}
(1963), 683--685.

\bibitem{kr-ko}
{ V. A. Kreknin and A. I. Kostrikin}, Lie algebras with regular
automorphisms, \emph{Math. USSR Doklady} {\bf 4} (1963), 355--358.


\bibitem{kh1}
{ E. I. Khukhro}, Finite $p$-groups admitting an automorphism
of order $p$ with a small number of fixed points, \emph{Math.
Notes} {\bf 38} (1986), 867--870.

\bibitem{kh2}
{E. I. Khukhro}, Groups and Lie rings admitting an almost regular
automorphism of prime order, \emph{Math. USSR Sbornik} {\bf 71}
(1992), 51--63.

\bibitem{kh4}
{E. I. Khukhro}, \emph{Nilpotent groups and their automorphisms},
De Gruyter, Berlin, 1993.

\bibitem{khu08} E. I.~Khukhro, Graded Lie rings with many commuting
components and an application to $2$-Fro\-be\-nius groups,
\emph{Bull. London Math. Soc.} {\bf 40} (2008), 907--912. %запятая

\bibitem{khu10al} { E. I. Khukhro}, Nilpotent length of a finite group
admitting a Frobenius group of automorphisms with fixed-point-free
kernel, \emph{Algebra Logic} {\bf 49} (2010), 551--560.

\bibitem{khu12ja}  E. I. Khukhro, Fitting height of a finite group with a Frobenius group of automorphisms,
\emph{J.~Algebra} {\bf 366} (2012), 1--11.


\bibitem{khu12al} E. I. Khukhro, Automorphisms of finite groups admitting a partition,
\emph{Algebra Logic} {\bf 51} (2012), 392--411 (2012).


\bibitem{khu13} E. I. Khukhro, Rank and order of a finite group admitting
a Frobenius group of automorphisms, \emph{Algebra Logic} {\bf 52} (2013), to appear.


\bibitem{khmk4}
{E. I. Khukhro and N. Yu. Makarenko}, Lie rings with almost regular
automorphisms, \emph{J.~Algebra} {\bf 264} (2003), 641--664.



\bibitem{khu-ma-shu} E. I.~Khukhro, N. Y. Makarenko, and P. Shumyatsky,
Frobenius groups of auto\-mor\-phisms and their fixed points,
\emph{Forum Math.}, 2011; DOI: \verb#10.1515/FORM.2011.152#;
\verb#arxiv.org/abs/1010.0343#.

\bibitem{kurz} H. Kurzweil, $p$-Automorphismen von aufl\"{o}sbaren $p'$-Gruppen,
\emph{Math.~Z.} {\bf 120} (1971), 326--354.


\bibitem{mk05}
N. Yu. Makarenko,
A nilpotent ideal in the Lie rings with automorphism of prime order,
%Sib. Mat. Zh. 46, No. 6, 1360-1373 (2005); translation in
\textit{Siberian Math. J.} {\bf 46} (2005), 1097--1107.

\bibitem{khmk1} N. Yu. Makarenko and E. I. Khukhro, On Lie rings admitting an
automorphism of order~4 with few fixed points, \emph{Algebra
Logic} {\bf 35} (1996), 21--43.

\bibitem{khmk2} N. Yu. Makarenko and E. I. Khukhro, Nilpotent groups admitting an almost regular automorphism of
order four, \emph{Algebra Logic} {\bf 35} (1996), 176--187.

\bibitem{khmk3} N. Yu. Makarenko and E. I. Khukhro,  Lie rings admitting automorphisms of order 4 with few fixed
points. II, \emph{Algebra Logic} {\bf 37} (1998), 78--91.

\bibitem{khmk5}
{N. Yu. Makarenko and E. I. Khukhro}, Almost solubility of Lie
algebras with almost regular automorphisms, \emph{J.~Algebra} {\bf
277} (2004), 370--407.

\bibitem{mak-khu13}
{N. Yu. Makarenko and E. I. Khukhro},  Lie algebras admitting a
metacyclic Frobenius group of automorphisms, \emph{Siberian Math.~J.}
{\bf 54} (2013),  100--114.

\bibitem{khu-ma-shu-DAN} N. Yu. Makarenko, E. I. Khukhro, and P. Shumyatsky,
Fixed points of Frobenius groups of automorphisms,
 \emph{Doklady Math.} {\bf 83}, no.~2
(2011),  152--154.

\bibitem{mak-shu10} N. Y. Makarenko and P. Shumyatsky, Frobenius
groups as groups of automorphisms, \emph{Proc. Amer. Math. Soc.}
{\bf 138} (2010), 3425--3436.


\bibitem{me}  Y. A. Medvedev,  Groups and Lie algebras with almost regular
automorphisms, \emph{J.~Algebra} {\bf 164} (1994), 877--885.


\bibitem{shu-a4} P. Shumyatsky, On the exponent of a finite group with an
automorphism group of order twelve, \emph{J. Algebra} {\bf 331}
(2011), 482--489.


\bibitem{shu-law} P. Shumyatsky, Positive laws in fixed points of
automorphisms of finite groups, \emph{J.~Pure Appl. Algebra} {\bf
215} (2011), 2550--2566.


\bibitem{th2} J. Thompson, Automorphisms of solvable groups, \emph{J.~Algebra}  {\bf 1}
(1964), 259--267.


\bibitem{tu} A. Turull, Fitting height of groups and of fixed points, \emph{J.~Algebra}
{\bf 86} (1984), 555--566.


\bibitem{kour}
\emph{Unsolved Problems in Group Theory. The Kourovka Notebook},
no.~17, Institute of Mathematics, Novosibirsk, 2010.

\bibitem{wang-chen} Y. M. Wang and Z.~M.~Chen,  Solubility of finite groups admitting a coprime order operator group, \emph{Boll. Un. Mat. Ital. A (7)} {\bf 7}, no.~3 (1993), 325--331.



\end{thebibliography}
\end{document}